\numberwithin{equation}{section}
\newtheorem*{theorem*}{Theorem}
\newtheorem{lemma}{Lemma}[section]
\newtheorem{remark}[lemma]{Remark}
\newtheorem{example}[lemma]{Example}
\newtheorem{definition}[lemma]{Definition}
\newtheorem{corollary}[lemma]{Corollary}
\newtheorem*{question*}{Question}
\newtheorem*{assumption*}{Assumption}
\newtheorem*{axiom*}{Axiom}
\newtheorem*{theorem*1}{Theorem (\ref{theta1})}
\newtheorem*{theorem*2}{Theorem (\ref{theta2})}
\newtheorem*{theorem*3}{Theorem (\ref{theta3})}
\newtheorem*{theorem*4}{Theorem (\ref{theta4})}
\newtheorem*{proposition*5}{Proposition (\ref{theta5})}
\newtheorem*{proposition*6}{Proposition (\ref{theta6})}
\sloppy \theoremstyle{plain}
\newcommand{\cInd}{\operatorname{c-Ind}}
\newcommand{\Ha}{\operatorname{H}}
\newcommand{\C}{\mathbb C}
\newcommand{\Z}{\mathbb Z}
\newcommand{\GL}{\operatorname{GL}}
\newcommand{\GSp}{\operatorname{GSp}}
\newcommand{\GMp}{\operatorname{GMp}}
\newcommand{\PGSp}{\operatorname{PGSp}}
\newcommand{\Mp}{\operatorname{Mp}}
\newcommand{\PGMp}{\operatorname{PGMp}}
\newcommand{\SL}{\operatorname{SL}}
\newcommand{\Sp}{\operatorname{Sp}}
\newcommand{\Span}{\operatorname{Span}}
\begin{document}
\title{Extended  Weil representations: the non-dyadic local field cases}
\author{Chun-Hui Wang}
\address{School of Mathematics and Statistics\\Wuhan University \\Wuhan, 430072,
P.R. CHINA}
\keywords{$2$-cocycle, Metaplectic group, Weil representation, Shimura-Waldspurger correspondence}
\subjclass[2010]{11F27, 20C25} 
\email{cwang2014@whu.edu.cn}
\begin{abstract} Let  $F$ be  a non-archimedean local field of odd residual characteristic. Let $W$ be a symplectic vector space over $F$. It is  known that  there are   different Weil representations of a Meteplectic covering group $\Mp(W)$. By utilizing  twisted actions, we  reorganize these representations as a representation of $\PGMp^{\pm}(W)$, a covering group related to the projective similitude symplectic group.  
\end{abstract}
\maketitle
\setcounter{secnumdepth}{3}
\tableofcontents{}
\section{Introduction to the main result}\label{In}
 Let  $F$ be  a non-archimedean local field of odd residual characteristic. Let $(W, \langle, \rangle)$ be a symplectic vector space  of  dimension $2m$   over  $F$.  Let $\Ha(W)=W\oplus F$ denote the usual Heisenberg group.  Denote the corresponding symplectic group (resp. similitude symplectic group) as $\Sp(W)$(resp. $\GSp(W)$).  Let $\psi$ be a non-trivial character of $F$.  Consider the 8-fold metaplectic group $\Mp(W)$ over $\Sp(W)$. Let $\pi_{\psi}$ be the  Weil representation of $\Mp(W)$ associated to $\psi$. For more information  on  Weil representations in the $p$-adic field case, refer to  \cite{Ku}, \cite{MoViWa}, etc. One question that arises is how to extend  Weil representations to the similitude groups.   In the $p$-adic case, this question has been systematically studied by Barthel  in \cite{Ba}.  In  that paper,  Barthel constructed the Metaplectic $2$-cocycle over $\GSp(W)$, defined the corresponding   Metaplectic group $\GMp(W)$, and then  obtained the  Weil representation  $\cInd_{\Mp(W)}^{\GMp(W)} \pi_{\psi}$. Inspired by \cite{GePi},\cite{Wa1},\cite{Wa2}, in this paper we will continue their works by narrowing the Weil representation through some twisted actions.

 Let $ a$ be an element of $ F^{\times}$ and  let $\psi^a$ denote another character of $F$ defined as $t\longmapsto \psi(at)$, for $t\in F$. Let $\pi_{\psi^a}$ be the Weil representation of $\Mp(W)$ associated to $\psi^a$. According to \cite[p.36(4)]{MoViWa}, $\pi_{\psi^{at^{2}}}\simeq \pi_{\psi^a}$, for any $t\in F^{\times}$. Let $k_F$ be the residue field of $F$. To simply the induction, we will first  assume that the cardinality of $|k_F|$ is congruent to $3$ modulo $4$. Our purpose is to find a group $\PGSp^{\pm}(W)$ such that there exists an exact sequence:
\begin{align*}
1 \longrightarrow \Sp(W)\longrightarrow \PGSp^{\pm}(W)\longrightarrow F^{\times}/F^{\times 2}\longrightarrow 1.
\end{align*}
 Then extend the Metaplectic $2$-cocycle from $\Sp(W)$ to $\PGSp^{\pm}(W)$.

  One issue arises because the center group of the  Metaplectic group constructed by Barthel  in \cite{Ba} is not the covering group over $F^{\times}$ when  the   dimension of $W$ does not divide $4$. So in Section \ref{sectionmap}, 
 we select  another  section map $\iota$ from $F^{\times}$ to $\GSp(W)$ so as to construct  a group $\widetilde{F^{\times}}$, which is a central extension of $F^{\times}/F^{\times 2}$ by $F^{\times}$. We define:
 $$\PGSp^{\pm}(W)= [\widetilde{F^{\times}} \ltimes \Sp(W)]/[F^{\times} \times 1].$$
Then there exists the above  exact sequence.  Based on the lemma 1.1.B in \cite{Ba},  we construct a cocycle $\widetilde{C}_M(-,-)$ over $\widetilde{F^{\times}} \ltimes \Sp(W)$.  Associated with the cocycle $\widetilde{C}_M(-,-)$ is the Metaplectic group $\widetilde{F^{\times}} \ltimes \Mp(W)$, whose central group contains  $[F^{\times} \times 1]$(cf. Lemma \ref{cent3mdulo4}). Then we can  define:
 $$\PGMp^{\pm}(W)=[\widetilde{F^{\times}} \ltimes \Mp(W)]/[F^{\times}\times 1].$$  This leads to two exact sequences: 
 \begin{align*}
1 \longrightarrow \mu_8\longrightarrow \PGMp^{\pm}(W)\longrightarrow \PGSp^{\pm}(W)\longrightarrow 1.
\end{align*}
\begin{align*}
1 \longrightarrow \Mp(W)\longrightarrow \PGMp^{\pm}(W)\longrightarrow F^{\times}/F^{\times 2}\longrightarrow 1.
\end{align*}
As a result, we can define the twisted induced Weil representation of  $\PGMp^{\pm}(W)$: 
$$\Pi=\cInd_{\Mp(W)}^{\PGMp^{\pm}(W)} \pi_{\psi},$$
which consists of  the four different Weil representations of  $\Mp(W)$. We also consider the 2-fold Metaplectic covering group, following the works of P. Perrin, R. Ranga Rao, and S. Kudla (see \cite{Pe},\cite{Ra},\cite{Ku}). However, when attempting to incorporate the Heisenberg group, there are certain issues that differ from the finite field case (see  Section \ref{atwstamod134}, \cite{Wa3}). Therefore, it is uncertain whether this Weil representation is suitable for studying the Howe correspondence.

In  Sections \ref{case1mod411}-\ref{case1mod414}, we  address the case where $|k_F|\equiv 1(\bmod4)$  by considering a covering group  instead of $F^{\times}$ itself. We obtain the same result as above  in the case of the $8$-degree Metaplectic cover. However, when  the   dimension of $W$ does not divide $4$, we are only able to extend the $2$-degree Metaplectic $2$-cocycle on $\Sp(W)$ to a $4$-degree Metaplectic $2$-cocycle on $\PGSp^{\pm}(W)$.(cf. Lemmas \ref{CM111},\ref{CM112}) 

For recent developments on the twisted problems and conjectures, see the paper by Gan, Gross, and Prasad  \cite{GaGrPr}. One question that arises is whether groups $\PGMp^{\pm}(W)$ also possess the GGP property.
 \section{Preliminaries}
 \subsection{Notations and conventions}\label{Notationandconventionsis}
In the whole text, we will  use the following notion and conventions. We will let  $F$ be  a non-archimedean local field  of \emph{odd} residual characteristic.  Let $k_F$  denote  its residue field. Let $\varpiup$ be one uniformizer of $F$.  Let  $\mathfrak{O}$ denote the ring of integers, and $\mathfrak{P}$ its prime ideal.  Let $U$ be the group of units in $\mathfrak{O}$, and  $U_n=\{ u \in F^{\times} \mid   u\equiv 1 \mod \mathfrak{P}^{n} \}$. Then $U/{U_1} \simeq k_F^{\times}$, and $U\simeq U_1 \times \mathfrak{f}$, for certain cyclic  subgroup $\mathfrak{f}$ of $U$. Assume $\mathfrak{f}=\langle \zeta \rangle$.

Let $(W, \langle, \rangle)$ be a symplectic vector space  of  dimension $2m$   over  $F$.  Let $\Ha(W)=W\oplus F$ denote the usual Heisenbeg group, with the multiplication law given by
$$(w, t)(w',t')=(w+w', t+t'+\frac{\langle w,w'\rangle}{2}).$$
 Let   $\Sp(W)$(resp.$\GSp(W)$) denote the corresponding symplectic group(resp. similitude symplectic group). Let $\lambda$ be the similitude factor map from $\GSp(W)$ to $F^{\times}$, and for $g\in \GSp(W)$, let $\lambda_g$ denote its similitude factor.  Let $\{e_1, \cdots, e_m; e_1^{\ast}, \cdots, e_m^{\ast}\}$ be a symplectic basis of $W$ so that $\langle e_i, e_j\rangle=0=\langle e_i^{\ast}, e_j^{\ast}\rangle$, $\langle e_i, e_j^{\ast}\rangle =\delta_{ij}$. Let $X=\Span\{ e_1, \cdots, e_m\}$, $X^{\ast}=\Span\{e_1^{\ast}, \cdots, e_m^{\ast}\}$.

 In $\Sp(W)$, let  $P=P(X^{\ast})=\{ g\in \Sp(W) \mid X^{\ast}g=X^{\ast}\}=\{ \begin{pmatrix} a& b\\ 0& (a^{\ast})^{-1}\end{pmatrix}\in \Sp(W)\}$, $N=\{ \begin{pmatrix} 1& b\\ 0& 1\end{pmatrix} \in \Sp(W)\}$, $M=\{ \begin{pmatrix} a& 0\\ 0& (a^{\ast})^{-1}\end{pmatrix}\in \Sp(W)\}$, $N^{-}=\{ \begin{pmatrix} 1& 0\\ c& 1\end{pmatrix} \in \Sp(W)\}$.  For a subset $S\subseteq \{1, \cdots, m\}$, let us define an element   $\omega_S$ of  $ \Sp(W)$ as follows: $ (e_i)\omega_S=\left\{\begin{array}{lr}
-e_i^{\ast}& i\in S\\
 e_i & i\notin S
 \end{array}\right.$ and $ (e_i^{\ast})\omega_S=\left\{\begin{array}{lr}
e_i^{\ast}& i\notin S\\
 e_i & i\in S
 \end{array}\right.$.  In particular,  we will  write simply  $\omega$,  for  $S=\{1, \cdots, m\}$.

  Let $(-, -)_F$ denote the Hilbert symbol defined from $F^{\times} \times F^{\times}$ to $\{ \pm 1\}$. If  $(Q, V)$ is a quadratic form defined over $F$ with the Witt decomposition $V\simeq \oplus_{i=1}^l F(a_i)$, the Hasse invariant is given in the following form:
$\epsilon(Q):=\prod_{1 \leq i< j\leq l} (a_i, a_j)_F$. We will let $\mu_n=\langle e^{\frac{2\pi i}{n}}\rangle$,  $ e^{\frac{2\pi i}{n}} \in \C^{\times}$.
\subsection{$2$-cocycles}
By Stone-von Neumann's theorem,  there exists only one irreducible representation of $\Ha(W)$ with central character $\psi$, up to isomorphism.  This representation is known as the Heisenberg representation and is denoted by $\pi_{\psi}$.  The symplectic group $\Sp(W)$ can act on $\Ha(W)$ while leaving the center $F$ unchanged. According to Weil's famous work, $\pi_{\psi}$ can give rise to a projective representation of $\Sp(W)$, which in turn leads to a true representation of a $\C^{\times}$-covering group over $\Sp(W)$. 
This group is commonly referred to as the Metaplectic group, and the true representation is known as the Weil representation. This special central covering can be reduced to an $8$-degree or $2$-degree covering over $\Sp(W)$, as shown in works by  \cite{Pe}, \cite{Ra}, \cite{We}, and others. The corresponding $8$-covering (or $2$-covering) Metaplectic group is denoted as $\Mp(W)$(resp. $\overline{\Sp}(W)$). We will mainly refer to the results of Rao's paper  \cite{Ra} and  Kudla's note \cite{Ku}.
\subsubsection{Perrin-Rao's $2$-cocycle} For $\psi$,  let $\gamma$ denote the  corresponding Weil index.  Note that the Weil index only depends on  the Witt class  and $\psi$.  For simplicity, we will denote  by  $\gamma_{\psi}(a)$--- the Weil index attached to the quadratic form  $v \longmapsto av^2$, and  let $\gamma(a, \psi)= \tfrac{\gamma_{\psi}(a)}{\gamma_{\psi}(1)}$ be its normalizer.

Let $c_{PR, X^{\ast}}(-,-)$ be the Perrin-Rao's cocycle in $\Ha^2(\Sp(W), \mu_8)$ with respect to $W=X\oplus X^{\ast}$ and $\psi$. For $g_1, g_2 \in \Sp(W)$, let $q(g_1,g_2)=q(X^{\ast}, X^{\ast} g_2^{-1}, X^{\ast} g_1)$ be the corresponding Leray invariant.  Then:
\begin{align}
c_{PR, X^{\ast}}(g_1, g_2)=\gamma(\psi(q(g_1, g_2)/2)).
\end{align}
As is known that  $\Sp(W)= \sqcup_{j=0}^m C_j$,
where $C_j= P\omega_SP$ for the above  $\omega_S$ with $|S|=j$.   In \cite{Ra}, Rao defined the following functions:
\begin{align}
x: \Sp(W) \longrightarrow F^{\times}/{(F^{\times})}^2; p_1\omega_Sp_2 \longmapsto \det(p_1p_2|_{X^{\ast}}) (F^{\times })^2,
\end{align}
\begin{align}
t: \Sp(W) \times \Sp(W) \longrightarrow \Z; (g_1, g_2)\longmapsto \tfrac{1}{2}(|S_1| + |S_2| - |S_3| -l),
\end{align}
 where $ g_1=p_1\omega_{S_1} p_1', g_2= p_2 \omega_{S_2} p_2'$  and  $ g_1g_2= p_3 \omega_{S_3} p_3', l=\dim q(g_1, g_2)$.  The  Perrin-Rao's $2$-cocycle in $\Ha^2(\Sp(W),\{ \pm 1\})$ is defined as follows:
\begin{align}
c(g_1, g_2)= (x(g_1), x(g_2))_F(-x(g_1)x(g_2), x(g_1g_2))_F ((-1)^t, \det(2q))_F (-1, -1)_F^{\tfrac{t(t-1)}{2}}\epsilon(2q),
\end{align}
where $t=t(g_1,g_2)$, $q=q(g_1,g_2)$, for  $g_1, g_2 \in \Sp(W)$. By \cite[p.20, Thm.4.5]{Ku} or \cite[p.360, Def.5.2]{Ra},  we can  define the normalizing constant by
\begin{align}\label{mx}
m_{X^{\ast}}: \Sp(W) \longrightarrow \mu_8; g \longmapsto \left\{ \begin{array}{c} (x(g), \tfrac{1}{2})_F \gamma(x(g), \psi)^{-1} \gamma_{\psi}(\tfrac{1}{2})^{-j(g)}\\
=\gamma(x(g), \psi^{\tfrac{1}{2}})^{-1} \gamma_{\psi^{\tfrac{1}{2}}}(1)^{-j(g)}\end{array}\right.
\end{align}
 for $g=p_1\omega_Sp_2$, $j(g)=|S|$. Then:
\begin{equation}\label{28inter}
c(g_1, g_2)=m_{X^{\ast}}(g_1g_2)^{-1} m_{X^{\ast}}(g_1) m_{X^{\ast}}(g_2) c_{PR, {X^{\ast}}}(g_1,g_2).
\end{equation}
\begin{example}
If $\dim W=2$, $\Sp(W)\simeq \SL_2(F)$. For $g_1, g_2, g_3=g_1g_2\in \SL_2(F)$ with $g_i=\begin{pmatrix}
a_i & b_i\\
c_i& d_i\end{pmatrix}$, we have:
\begin{itemize}
\item[(1)] $x(g_1)=\left\{\begin{array}{lr} a_1F^{\times 2} & \textrm{ if } c_1=0\\
c_1F^{\times 2} & \textrm{ if } c_1\neq 0\end{array}\right.$;
\item[(2)] $m_{X^{\ast}}(g_1)=\left\{\begin{array}{lr} \gamma(a_1, \psi^{\tfrac{1}{2}})^{-1}& \textrm{ if } c_1=0\\
\gamma_{\psi}(\tfrac{1}{2} c_1)^{-1}& \textrm{ if } c_1\neq 0\end{array}\right.$;
\item[(3)] $c_{PR, X^{\ast}}(g_1, g_2)=\gamma_{\psi}(\tfrac{1}{2} c_1c_2c_3)$;
\item[(4)] $c(g_1, g_2)=(x(g_1), x(g_2))_F(-x(g_1)x(g_2), x(g_3))_F$.
\end{itemize}
\end{example}
\begin{proof}
See \cite{Ku}, \cite{Ra}.
\end{proof}
\subsubsection{Barthel's $2$-cocycle I}
In \cite{Ba}, Barthel generated these $2$-cocycles   from $\Sp(W)$ to $\GSp(W)$ and used them to  construct the  Metaplectic covering group over $\GSp(W)$. According to \cite[Lmm.1.1.B]{Ba}, an automorphism  of $\Sp(W)$ can be  uniquely lifted to an automorphism  of  $\Mp(W)$ or $\overline{\Sp}(W)$   by keeping the center $\mu_8$ or $\mu_2$ unchanged.

  Let us first consider the Metaplectic group $\Mp(W)$.  Let $\alpha$ be an automorphism of $\Sp(W)$. Following \cite[p.210]{Ba}, let $\nu$ denote the map from $\alpha \times \Sp(W) $ to $ \mu_8$ such that the following equality holds:
$$(g, \epsilon)^{\alpha}=(g^{\alpha}, \nu(\alpha, g) \epsilon), \qquad  \qquad (g, \epsilon) \in \Mp(W).$$
By \cite[p.210, Sect.1.2.1]{Ba}, the above function $\nu$ determines an automorphism of $\Mp(W)$ iff the following equality holds:
\begin{equation}\label{crao}
c_{PR, X^{\ast}}(g, g')=c_{PR, X^{\ast}}(g^{\alpha},g^{'\alpha})\nu(\alpha,g) \nu( \alpha, g') \nu(\alpha, gg')^{-1}.
\end{equation}
\begin{example}\label{example1}
If $\alpha=h\in  \Sp(W)$, and  $h$ acts on $\Sp(W)$ by  conjugation, then $\nu(\alpha,g)=c_{PR, X^{\ast}}(h^{-1}, g h)c_{PR, X^{\ast}}(g,h)$.
\end{example}
\begin{proof}
\begin{equation}
\begin{split}
&c_{PR, X^{\ast}}(g_1^h, g_2^h) c_{PR, X^{\ast}}(h^{-1}, g_1h)c_{PR, X^{\ast}}(h^{-1}, g_1g_2h)^{-1}\\
=&c_{PR, X^{\ast}}(g_1h, h^{-1}g_2h)\\
=&c_{PR, X^{\ast}}(h^{-1}, g_2h)^{-1}c_{PR, X^{\ast}}(g_1h, h^{-1}) c_{PR, X^{\ast}}(g_1, g_2h)\\
=&c_{PR, X^{\ast}}(h^{-1}, g_2h)^{-1}c_{PR, X^{\ast}}(g_1,h)^{-1} c_{PR, X^{\ast}}(h,h^{-1})c_{PR, X^{\ast}}(g_2,h)^{-1} c_{PR, X^{\ast}}(g_1,g_2)c_{PR, X^{\ast}}(g_1g_2,h)\\
=&c_{PR, X^{\ast}}(g_1,g_2) c_{PR, X^{\ast}}(h^{-1}, g_2h)^{-1}c_{PR, X^{\ast}}(g_1,h)^{-1} c_{PR, X^{\ast}}(g_2,h)^{-1} c_{PR, X^{\ast}}(g_1g_2,h).
\end{split}
\end{equation}
Hence \begin{equation}
\begin{split}
&\frac{c_{PR, X^{\ast}}(g_1^h, g_2^h)}{c_{PR, X^{\ast}}(g_1,g_2)}\\
=&[c_{PR, X^{\ast}}(h^{-1}, g_1g_2h)c_{PR, X^{\ast}}(g_1g_2,h)][c_{PR, X^{\ast}}(h^{-1}, g_1h)c_{PR, X^{\ast}}(g_1,h)]^{-1} [c_{PR, X^{\ast}}(h^{-1}, g_2h)c_{PR, X^{\ast}}(g_2,h)]^{-1}.
\end{split}
\end{equation}

So $\nu(\alpha,g)=c_{PR, X^{\ast}}(h^{-1}, g h)c_{PR, X^{\ast}}(g,h)$ by the uniqueness.
\end{proof}
\begin{example}[Barthel]\label{example2}\footnote{ The expression here differs from Barthel’s formula by $-1$. This is because I have written the action of $\Sp(W)$ on $\Ha(W)$ on the right, which means that the notation $\omega_S$ we use is not the same. }
If $\alpha=\begin{pmatrix}
1& 0\\
0& y
\end{pmatrix}$, for some $y\in F^{\times}$,  then
\begin{align}\label{alpp}
\nu(\alpha,g)=(\det (a_1a_2), y)_F \gamma(y, \psi^{\frac{1}{2}})^{-|S|},
\end{align} for $g=p_1\omega_S p_2$, $p_i=\begin{pmatrix}
a_i& b_i\\
0& d_i
\end{pmatrix}$, $i=1,2$.
\end{example}
\begin{proof}
See \cite[Pro.1.2.A]{Ba}. Since we follow Rao and Kudla's  notations, let us verify this result in more details. By \cite[p.212]{Ba}, there exist a character $\chi_y$ of $F^{\times}$and a non -zero complex number $\beta_y$ such that
$$   \nu(y, \begin{pmatrix}
a& b\\
0& (a^{\ast})^{-1}
\end{pmatrix})= \chi_y(\det a), \textrm{ and }  \nu(y, \omega_S)=\beta_y^{|S|}.$$
By the  embedding  $\SL_2\longrightarrow \Sp$, it reduces to discussing the simple case that  $\dim W=2$.  Following \cite[p.212]{Ba}, let $N(x)= \begin{pmatrix}
1& 0\\
x& 1
\end{pmatrix}\in \SL_2$. Since
$$ \begin{pmatrix}
1& 0\\
x& 1
\end{pmatrix}= \begin{pmatrix}
1& x^{-1}\\
0& 1
\end{pmatrix} \begin{pmatrix}
0& -1\\
1& 0
\end{pmatrix} \begin{pmatrix}
x& 1\\
0& x^{-1}
\end{pmatrix},$$
$\nu(y, N(x))=\beta_y \chi_y(x)$.  For $a, b, a+b \in F^{\times}$,
\begin{equation}
\begin{split}
\beta_y \chi_y(a+b)=\nu(y, N(a+b))&=\nu(y, N(a))\nu(y, N(b))\frac{c_{PR, X^{\ast}}(N(a)^y,N(b)^y)}{c_{PR, X^{\ast}}(N(a),N(b))}\\
& =\nu(y, N(a))\nu(y, N(b))\frac{c_{PR, X^{\ast}}(N(ay^{-1}),N(by^{-1}))}{c_{PR, X^{\ast}}(N(a),N(b))}\\
&=\beta_y \chi_y(a)\beta_y \chi_y(b)\frac{\gamma_{\psi}(\frac{1}{2} ab(a+b)y^{-3})}{\gamma_{\psi}(\frac{1}{2} ab(a+b))}\\
&=\beta_y^2 \chi_y(ab)(ab(a+b), y)_F\gamma(y, \psi^{\frac{1}{2}}).
\end{split}
\end{equation}
Hence:
\begin{align}
\beta_y^{-1} \chi_y(\frac{1}{a}+\frac{1}{b})=(\frac{1}{a}+\frac{1}{b}, y)_F  \gamma(y, \psi^{\frac{1}{2}}).
\end{align}
Therefore, $\chi_y(a)=(a,y)_F$,  $\beta_y^{-1}=\gamma(y, \psi^{\frac{1}{2}})$.
\end{proof}
\begin{lemma}\label{twoau}
For two automorphisms $\alpha_1, \alpha_2$,  $\nu(\alpha_1\alpha_2, g)=\nu(\alpha_1,g)\nu(\alpha_2, g^{\alpha_1})$.
\end{lemma}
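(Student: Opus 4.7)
The plan is to prove this identity by simply unwinding the defining equation of $\nu$ twice, exploiting the fact that the lifting of an automorphism from $\Sp(W)$ to $\Mp(W)$ is, by \cite[Lmm.1.1.B]{Ba}, a genuine \emph{homomorphism} from $\Aut(\Sp(W))$ to $\Aut(\Mp(W))$. In particular, applying the composition $\alpha_1\alpha_2$ to an element of $\Mp(W)$ must agree with applying $\alpha_1$ first and then $\alpha_2$, and the identity we want is nothing more than the cocycle consequence of this compatibility.

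Concretely, I would fix an arbitrary element $(g,\epsilon)\in \Mp(W)$ and compute $(g,\epsilon)^{\alpha_1\alpha_2}$ in two ways. On the one hand, by the definition of $\nu$,
\begin{equation*}
(g,\epsilon)^{\alpha_1\alpha_2} = \bigl(g^{\alpha_1\alpha_2},\,\nu(\alpha_1\alpha_2, g)\,\epsilon\bigr).
\end{equation*}
On the other hand, applying $\alpha_1$ first and then $\alpha_2$,
\begin{equation*}
\bigl((g,\epsilon)^{\alpha_1}\bigr)^{\alpha_2}
= \bigl(g^{\alpha_1},\,\nu(\alpha_1,g)\,\epsilon\bigr)^{\alpha_2}
= \bigl((g^{\alpha_1})^{\alpha_2},\,\nu(\alpha_2, g^{\alpha_1})\,\nu(\alpha_1,g)\,\epsilon\bigr),
\end{equation*}
using that $\mu_8$ is central and is preserved by the lift. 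Since $g^{\alpha_1\alpha_2}=(g^{\alpha_1})^{\alpha_2}$, comparing second coordinates of the two expressions and cancelling $\epsilon$ yields
\begin{equation*}
\nu(\alpha_1\alpha_2, g) = \nu(\alpha_1, g)\,\nu(\alpha_2, g^{\alpha_1}),
\end{equation*}
as claimed.

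There is no serious obstacle here, since the argument is purely formal and uses no special structure of $c_{PR,X^{\ast}}$ beyond what already went into the construction of $\nu$ via \eqref{crao}. The only point to be careful about is the convention for composition: with the right action convention used throughout (writing $g^{\alpha}$), the product $\alpha_1\alpha_2$ stands for "$\alpha_1$ first, then $\alpha_2$", which is what makes the bracketing $((g,\epsilon)^{\alpha_1})^{\alpha_2}$ give the correct order $\nu(\alpha_1,g)\nu(\alpha_2,g^{\alpha_1})$ rather than the reverse.
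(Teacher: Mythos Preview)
Your proof is correct and is essentially identical to the paper's own argument: the paper also computes $(g,\epsilon)^{\alpha_1\alpha_2}$ by applying $\alpha_1$ then $\alpha_2$ and reads off the identity from the second coordinate. Your write-up is just a slightly more detailed version of the same one-line computation.
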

\begin{proof}
Because  $(g, \epsilon)^{\alpha_1\alpha_2}=(  g^{\alpha_1}, \nu( \alpha_1,g) \epsilon)^{\alpha_2}=( g^{\alpha_1\alpha_2}, \nu(\alpha_1,g) \nu(\alpha_2,g^{\alpha_1})\epsilon)$.
\end{proof}
Recall the exact sequence:
\begin{align}
1 \longrightarrow \Sp(W) \longrightarrow \GSp(W) \stackrel{\lambda}{\longrightarrow} F^{\times}\longrightarrow 1.
\end{align}
Let us choose a section map
$$s: F^{\times} \longrightarrow  \GSp(W); y \longmapsto \begin{pmatrix}
1& 0\\
0& y
\end{pmatrix}.$$
Then $\GSp(W) \simeq  F^{\times}\ltimes \Sp(W)$. By lifting the action of $F^{\times}$ onto $\Mp(W)$,  we   obtain a group $F^{\times} \ltimes \Mp(W)$ and an exact sequence:
$$ 1\longrightarrow \mu_8 \longrightarrow F^{\times} \ltimes \Mp(W) \stackrel{ }{\longrightarrow}F^{\times} \ltimes \Sp(W) \longrightarrow 1.$$
 Let us recall  Barthel' result in constructing the $2$-cocycle $C_{BPR, X^{\ast}}$ associated to this exact sequence. For  $([y_1, g_1], \epsilon_1), ([y_2, g_2], \epsilon_2)\in F^{\times} \ltimes \Mp(W)$,
\begin{equation}
\begin{split}
(y_1, g_1, \epsilon_1)\cdot (y_2, g_2, \epsilon_2)&=(y_1y_2, [g_1, \epsilon_1]^{y_2}[g_2, \epsilon_2])\\
& =(y_1y_2, [g_1^{y_2}, \nu(y_2, g_1)\epsilon_1][g_2, \epsilon_2])\\
&=(y_1y_2, [g_1^{y_2}g_2, \nu( y_2,g_1)c_{PR, X^{\ast}}(g_1^{y_2}, g_2)\epsilon_1\epsilon_2]).
\end{split}
\end{equation}
Hence:
\begin{align}
C_{BPR, X^{\ast}}([y_1,g_1], [y_2, g_2]) =\nu( y_2,g_1)c_{PR, X^{\ast}}(g_1^{y_2}, g_2),   \quad\quad [y_i, g_i]\in F^{\times} \ltimes \Sp(W).
\end{align}
\begin{example}
If $\dim W=2$, $\GSp(W)\simeq \GL_2(F)$. Let  $h_i=\begin{pmatrix}
a_i& b_i\\
c_i& d_i
\end{pmatrix}\in \GL_2(F)$,  $i=1, 2, 3$, with $h_3=h_1h_2$.  Then:
$$C_{BPR, X^{\ast}}(h_1, h_2) =\left\{ \begin{array}{ll}
 (a_1, \det h_2)_F, & \textrm{ if } c_1=0, \\
(c_1\det h_1, \det h_2)_F \gamma(\det h_2, \psi^{\frac{1}{2}})^{-1}\gamma_{\psi}(\tfrac{1}{2}c_1 c_2c_3\det h_2), & \textrm{ if } c_1\neq 0.
 \end{array}\right. $$
\end{example}
\begin{proof}
Note that for $g= \begin{pmatrix}
a& b\\
c& d
\end{pmatrix} \in \SL_2(F)$, with $c\neq 0$,
$g= \begin{pmatrix}
1& a/c\\
0& 1
\end{pmatrix}\begin{pmatrix}
0& -1\\
1& 0
\end{pmatrix}\begin{pmatrix}
c& d\\
0& c^{-1}
\end{pmatrix}$. Let us  write $h_i=\begin{pmatrix}
1& 0\\
0& y_i
\end{pmatrix} g_i$, for $y_i=\det h_i$, $g_i=\begin{pmatrix}
a_i& b_i\\
y_i^{-1}c_i& y_i^{-1}d_i
\end{pmatrix}$. Then $$C_{BPR, X^{\ast}}(h_1, h_2)=C_{BPR, X^{\ast}}([y_1,g_1], [y_2, g_2])=\nu( y_2,g_1)c_{PR, X^{\ast}}(g_1^{y_2}, g_2).$$
1) In the first case,  $c_{PR, X^{\ast}}(g_1^{y_2}, g_2)=1$. Hence:
$$C_{BPR, X^{\ast}}(h_1, h_2)=\nu( y_2,g_1)=(y_2,  a_1)_F.$$
2) In the second case, if $c_1\neq 0$, $c_2=0$, $c_{PR, X^{\ast}}(g_1^{y_2}, g_2)=1$.
$$\nu( y_2,g_1)=(y_1^{-1} c_1, y_2)_F \gamma(y_2, \psi^{\frac{1}{2}})^{-1}=(y_1 c_1, y_2)_F\gamma(y_2, \psi^{\frac{1}{2}})^{-1}.$$
 Hence: $$C_{BPR, X^{\ast}}(h_1, h_2)=\nu( y_2,g_1)=(y_1 c_1, y_2)_F\gamma(y_2, \psi^{\frac{1}{2}})^{-1}.$$.\\
If $c_1\neq 0$, $c_2\neq 0$, $g_1^{y_2}= \begin{pmatrix}
a_1& b_1 y_2\\
y_2^{-1}y_1^{-1}c_1& y_1^{-1}d_1
\end{pmatrix}$. So
$$ c_{PR, X^{\ast}}(g_1^{y_2}, g_2)= \gamma_{\psi}(\tfrac{1}{2}y_2^{-1}y_1^{-1}c_1 y_2^{-1}c_2(y_2^{-1}y_1^{-1}c_1a_2+y_1^{-1}d_1 y_2^{-1}c_2))$$
$$=\gamma_{\psi}(\tfrac{1}{2}c_1 y_2^{-1}c_2(c_1a_2+d_1c_2))=\gamma_{\psi}(\tfrac{1}{2}y_2c_1c_2c_3);$$
$$\nu( y_2,g_1)=(y_1 c_1, y_2)_F\gamma(y_2, \psi^{\frac{1}{2}})^{-1}. $$
$$C_{BPR, X^{\ast}}(h_1, h_2)=\nu( y_2,g_1)c_{PR, X^{\ast}}(g_1^{y_2}, g_2)=(y_1 c_1, y_2)_F\gamma(y_2, \psi^{\frac{1}{2}})^{-1}\gamma_{\psi}(\tfrac{1}{2}y_2c_1c_2c_3).$$
\end{proof}
\subsubsection{Barthel's $2$-cocycle II}
As a consequence, let us consider the $2$-covering Metaplectic group $\overline{\Sp}(W)$. For an automorphism $\alpha$ of $\Sp(W)$, let us define $\nu_2: \alpha \times \Sp(W)  \longrightarrow \mu_2$ such that
$$(g, \epsilon)^{\alpha}=(g^{\alpha}, \nu_2( \alpha,g) \epsilon), \qquad  \qquad (g, \epsilon) \in \overline{\Sp}(W).$$
Similarly, $\nu_2$ determines an automorphism of $\overline{\Sp}(W)$ iff the following equality holds:
\begin{equation}\label{crao}
c(g, g')=c(g^{\alpha},g^{'\alpha})\nu_2(\alpha,g) \nu_2(\alpha,g') \nu_2(\alpha, gg')^{-1}.
\end{equation}
\begin{lemma}\label{nu2}
$\nu_2( \alpha,g )=\nu(\alpha,g) \frac{m_{X^{\ast}}(g)}{m_{X^{\ast}}(g^{\alpha})}$.
\end{lemma}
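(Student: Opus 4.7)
The plan is to substitute the comparison (\ref{28inter}) between $c$ and $c_{PR, X^{\ast}}$ into the cocycle identity that defines $\nu_2$, use the analogous identity (\ref{crao}) for $\nu$, and then appeal to uniqueness. Concretely, I would first form the ratio $c(g, g')/c(g^{\alpha}, g'^{\alpha})$ and, via (\ref{28inter}), rewrite it as the product of $c_{PR, X^{\ast}}(g, g')/c_{PR, X^{\ast}}(g^{\alpha}, g'^{\alpha})$ with a ratio of $m_{X^{\ast}}$-values at $g,g',gg'$ and their $\alpha$-images.

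Next, I would invoke (\ref{crao}) to replace the $c_{PR, X^{\ast}}$-ratio by $\nu(\alpha, g)\,\nu(\alpha, g')\,\nu(\alpha, gg')^{-1}$, and regroup so that each factor $\nu(\alpha, \cdot)$ is paired with the corresponding $m_{X^{\ast}}(\cdot)/m_{X^{\ast}}(\cdot^{\alpha})$. This exhibits the candidate $\widetilde{\nu}_2(\alpha, g) := \nu(\alpha, g)\, m_{X^{\ast}}(g)/m_{X^{\ast}}(g^{\alpha})$ as a solution of the cocycle identity required of $\nu_2$. By the uniqueness of the lift of an automorphism of $\Sp(W)$ to $\overline{\Sp}(W)$ --- the $2$-fold analogue of the principle already used in Examples \ref{example1} and \ref{example2}, coming from Barthel's Lemma 1.1.B --- this forces $\nu_2 = \widetilde{\nu}_2$.

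The main obstacle I anticipate is the $\mu_2$-valuedness of the formula: a priori $\widetilde{\nu}_2$ takes values in $\mu_8$, whereas $\nu_2$ must lie in $\mu_2$. I would resolve this as follows: since the cocycle identity verified above has $\mu_2$-valued left-hand side, the reduction $\widetilde{\nu}_2 \pmod{\mu_2}$ is a $1$-cocycle, hence a group homomorphism $\Sp(W) \to \mu_8/\mu_2 \simeq \mu_4$; as $\Sp(W)$ is perfect, this homomorphism is trivial. Therefore $\widetilde{\nu}_2$ actually takes values in $\mu_2$, and the identification $\nu_2 = \widetilde{\nu}_2$ holds in $\mu_2$ as required.
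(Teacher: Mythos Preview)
Your proposal is correct and follows essentially the same approach as the paper's proof: both use (\ref{28inter}) and (\ref{crao}) to show that the candidate $\nu(\alpha,g)\, m_{X^{\ast}}(g)/m_{X^{\ast}}(g^{\alpha})$ satisfies the defining cocycle identity for $\nu_2$, and then conclude by uniqueness. Your explicit verification of $\mu_2$-valuedness via the perfectness of $\Sp(W)$ is a nice detail that the paper leaves implicit.
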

\begin{proof}
By (\ref{28inter}),(\ref{crao}),
\begin{equation}
\begin{split}
&c(g, g')m_{X^{\ast}}(gg') m_{X^{\ast}}(g)^{-1}m_{X^{\ast}}(g')^{-1}\\
& =  c_{PR, {X^{\ast}}}(g,g')\\
& =c_{PR, X^{\ast}}(g^{\alpha},g^{'\alpha})\nu(\alpha,g) \nu(\alpha,g') \nu(\alpha,gg')^{-1}\\
&=c(g^{\alpha},g^{'\alpha}) m_{X^{\ast}}(g^{\alpha}g^{'\alpha}) m_{X^{\ast}}(g^{\alpha})^{-1}m_{X^{\ast}}(g^{'\alpha})^{-1}\nu(\alpha,g) \nu(\alpha,g') \nu(\alpha,gg')^{-1}.
\end{split}
\end{equation}
Hence:
$$\frac{c(g, g')}{c(g^{\alpha},g^{'\alpha})}= [\frac{m_{X^{\ast}}(gg')}{m_{X^{\ast}}(g^{\alpha}g^{'\alpha})}]^{-1} \frac{m_{X^{\ast}}(g)}{ m_{X^{\ast}}(g^{\alpha})}
\frac{m_{X^{\ast}}(g')}{ m_{X^{\ast}}(g^{'\alpha})}\nu(\alpha,g) \nu(\alpha,g') \nu(\alpha,gg')^{-1}.$$
So $\nu_2( \alpha,g)=\nu(\alpha,g) \frac{m_{X^{\ast}}(g)}{m_{X^{\ast}}(g^{\alpha})}$.
\end{proof}

Similarly,  we   can obtain a group $F^{\times} \ltimes \overline{\Sp}(W)$ and an exact sequence:
$$ 1\longrightarrow \mu_2 \longrightarrow F^{\times} \ltimes \overline{\Sp}(W) \stackrel{ }{\longrightarrow}F^{\times} \ltimes \Sp(W) \longrightarrow 1.$$
 Let us write $ \overline{\GSp}^B(W)$ for the group $ F^{\times} \ltimes \overline{\Sp}(W)$, and  denote   the corresponding  $2$-cocycle by $C_B(-,-)$. Then:
$$ C_B([y_1,g_1], [y_2, g_2]) =\nu_2(y_2,g_1)c(g_1^{y_2}, g_2),   \quad\quad [y_i, g_i]\in F^{\times} \ltimes \Sp(W).$$
Consequently,
$$C_B([y_1,g_1], [y_2, g_2]) =\nu_2( y_2, g_1)c(g_1^{y_2}, g_2)=\nu(y_2 ,g_1) \frac{m_{X^{\ast}}(g_1)}{m_{X^{\ast}}(g_1^{y_2})}m_{X^{\ast}}(g_1^{y_2}g_2)^{-1} m_{X^{\ast}}(g_1^{y_2}) m_{X^{\ast}}(g_2) c_{PR, {X^{\ast}}}(g_1^{y_2},g_2).$$
Hence:
\begin{equation}\label{29inter}
C_B([y_1,g_1], [y_2, g_2])=m_{X^{\ast}}(g_1^{y_2}g_2)^{-1}m_{X^{\ast}}(g_1)m_{X^{\ast}}(g_2)C_{BPR, X^{\ast}}([y_1,g_1], [y_2, g_2]).
\end{equation}
\subsubsection{Example} Let us consider the case that  $\dim W=2$, $\GSp(W)\simeq \GL_2(F)$.    Let  $h_i=\begin{pmatrix}
a_i& b_i\\
c_i& d_i
\end{pmatrix}\in \GL_2(F)$,  $i=1, 2, 3$, with $h_3=h_1h_2$.    Let us  write $h_i=\begin{pmatrix}
1& 0\\
0& y_i
\end{pmatrix} g_i$, for $y_i=\det h_i$, $g_i=\begin{pmatrix}
a_i& b_i\\
y_i^{-1}c_i& y_i^{-1}d_i
\end{pmatrix}$.   Then:
  $$C_B(h_1, h_2) =\nu_2(y_2,g_1)c(g_1^{y_2}, g_2)=\nu_2(\det h_2,g_1)c(g_1^{\det h_2}, g_2).$$
\begin{itemize}
\item[(1)] If $c_1=0$, $m_{X^{\ast}}(g_1)=m_{X^{\ast}}(g_1^{y_2})$, so $\nu_2(y_2, g_1)=\nu(y_2, g_1)=(y_2,  a_1)_F$.
\item[(2)]  If $m_{X^{\ast}}(g_1)=\gamma_{\psi}(\tfrac{1}{2} c_1 y_1^{-1})^{-1} $, $m_{X^{\ast}}(g_1^{y_2})=\gamma_{\psi}(\tfrac{1}{2} c_1 y_1^{-1}y_2^{-1})^{-1} $.
 $$\frac{m_{X^{\ast}}(g_1)}{  m_{X^{\ast}}(g_1^{y_2})}= \frac{\gamma_{\psi}(\tfrac{1}{2} c_1 y_1^{-1}y_2^{-1})}{\gamma_{\psi}(\tfrac{1}{2} c_1 y_1^{-1})}=\gamma(y^{-1}_2, \psi^{\tfrac{1}{2}}) (y_1c_1,y_2)_F.$$
  $$\nu_2(y_2, g_1)=\nu(y_2, g_1)\frac{m_{X^{\ast}}(g_1)}{  m_{X^{\ast}}(g_1^{y_2})}=(y_1 c_1, y_2)_F\gamma(y_2, \psi^{\frac{1}{2}})^{-1}\gamma(y^{-1}_2, \psi^{\tfrac{1}{2}}) (y_1c_1,y_2)_F=1.$$
  \end{itemize}
  \begin{remark}
  The above $\nu_2(-,-)$ is compatible with Kubota's cocycle on $\GL_2(F)$. For more information on this aspect, readers can refer to the thesis written by Patel in 2014(cf. \cite{Pa}).
  \end{remark}
 \subsection{$F^{\times}$}\label{Ftimes}  Recall $F^{\times} \simeq  \mathfrak{f} \times U_1\times \langle \varpiup\rangle  $.   By \cite{Ge},   $U_1 \subseteq F^{\times 2}$. Since $U_1$ is a pro-p-group($p\neq 2$), the square  map is an automorphism. So there exists a group homomorphism $\sqrt{\quad}: U_1 \longrightarrow U_1$ such that $(\sqrt{a^2})^2=a^2$. On $\langle \varpiup^2\rangle $, we fix  the square root map: $\sqrt{\varpiup^2}=-\varpiup$. Let us consider $\mathfrak{f}$ and its subgroup $\mathfrak{f}^2=\langle \zeta^2\rangle$.  It is known that there exists an exact sequence:
\[
\begin{CD}\label{Ftwo}
1 @>>> \{\pm 1\} @>>> \mathfrak{f} @>>> \mathfrak{f}^{2} @>>> 1,\\
  @.  @. a @>>> a^2. @.
\end{CD}
\]
Hence $\mathfrak{f}$ can be viewed as a central extension of $\mathfrak{f}^{2}$ by $\{\pm 1\}$.  More precisely, let us choose a square root map $\sqrt{\,}: \mathfrak{f}^{ 2} \longrightarrow \mathfrak{f}$, such that $(\sqrt{a^2})^2=a^2$, and $\sqrt{1}=1$. Associated to this map, there is a $2$-cocycle $c_{\sqrt{\,}}$ from $\mathfrak{f}^{2}\times \mathfrak{f}^{2}$ to $\mathfrak{f}$.  Let $\widetilde{\mathfrak{f}^{ 2}}$ be the corresponding central  covering  group over $\mathfrak{f}^{ 2}$.   Then there exists a group isomorphism:
$$\widetilde{\mathfrak{f}^{ 2}} \longrightarrow \mathfrak{f}; [a^2, \epsilon]\longmapsto \sqrt{a^2} \epsilon.$$
Consequently,  $F^{\times}\simeq  \widetilde{\mathfrak{f}^2} \times U_1\times \langle \varpiup\rangle $.
\begin{itemize}\label{q143}
\item[(1)] If   $|k_F|\equiv 3(\bmod4)$, let us write $|k_F|-1=2l$, for some odd number $l$. Then $\mathfrak{f}=\langle \zeta^2\rangle \times\{\pm 1\}=\langle \zeta^4\rangle \times\{\pm 1\}$. Hence we can choose the square root map: $\sqrt{\,}: \mathfrak{f}^2 \longrightarrow \mathfrak{f}; \zeta^4 \longrightarrow \zeta^2$ such that $c_{\sqrt{\,}}\equiv1$.
\item[(2)]  If   $|k_F|\equiv 1(\bmod4)$, let us write $|k_F|-1=2^kl$, for some odd number $l$, and $k>1$.  Assume $\zeta=\zeta_1  \zeta_2$, for an   element  $\zeta_1$ of order $2^k$ and  another element $\zeta_2$ of order $l$. Then $\mathfrak{f}^2=\langle \zeta^2\rangle=\langle \zeta_1^2\rangle \times \langle \zeta_2^2\rangle$. 
    We  fix a  square root map: $$\sqrt{\,}: \mathfrak{f}^2 \longrightarrow \mathfrak{f}; \zeta_1^{2i} \times \zeta_2^{2j} \longrightarrow (-\zeta_1)^{i}\times \zeta_2^{j},$$ for $0\leq i\leq 2^{k-1}-1,  1\leq j\leq l$. Then
    $$c_{\sqrt{\,}}(\zeta_1^{2i_1} \zeta_2^{2j_1}, \zeta_1^{2i_2} \zeta_2^{2j_2})=c_{\sqrt{\,}}(\zeta_1^{2i_1}, \zeta_1^{2i_2} )=\left\{\begin{array}{lc} 1 &  \textrm{ if } i_1+i_2< 2^{k-1},\\
    -1 &  \textrm{ if }  i_1+i_2\geq 2^{k-1},
    \end{array}\right.$$
    for $0\leq i_1,i_2 \leq 2^{k-1}-1$, $1\leq j_1,j_2 \leq l$.
\end{itemize}
\section{The group $\PGMp^{\pm}(W)$: Case $|k_F|\equiv 3(\bmod4)$}\label{case3mod42}
Following Barthel's work,   in this section we will construct  the group $\PGSp^{\pm}(W)$  and its Metaplectic $8$-degree and $2$-degree covering groups.
\subsection{$\widetilde{F^{\times}}$}\label{sectionmap}  For our purpose,  we  select  a section map $\iota$ from $F^{\times}$ to $\GSp(W)$ in the following way:
\begin{align}
\iota:  U_1\longrightarrow  \GSp(W); &u \longmapsto
\begin{pmatrix}
\sqrt{u} & 0\\
0 & \sqrt{u} \end{pmatrix}, \\
\iota:  \langle \varpiup\rangle    \longrightarrow  \GSp(W);& \varpiup \longmapsto \begin{pmatrix}
0& -I\\
\varpiup I& 0\end{pmatrix},\\
\iota: \mathfrak{f}^2   \longrightarrow  \GSp(W);&
   x\longmapsto  \begin{pmatrix}
\sqrt{x} & 0\\
0 & \sqrt{x} \end{pmatrix},\\
\iota:\{\pm 1\}   \longrightarrow  \GSp(W);& -1\longmapsto \begin{pmatrix}
I& 0\\
0& -I\end{pmatrix}.
\end{align}
For $a= a_1a_2a_3$ with $a_1\in \mathfrak{f} $, $a_2\in U_1$,   $a_3\in \langle \varpiup\rangle$, let us define
 $$\iota(a)=\iota(a_1)\iota(a_2)\iota(a_3).$$
The restriction of $\iota$ on $U_1\times \langle \varpiup\rangle $ and $\mathfrak{f}$ are both group homomorphisms, but $\iota$ is not a group homomorphism on the entire group $F^{\times}$. Let $\widetilde{F^{\times}}$ denote the group generated by these $\iota(F^{\times})$ in $\GSp(W)$.
 Let us define
\begin{align}
  D=\widetilde{F^{\times}} \cap \Sp(W).
\end{align}
\begin{lemma}\label{mod1}
\begin{itemize}
\item[(1)] $F^{\times} \subseteq \widetilde{F^{\times}} $.
 \item[(2)] $D=\langle \begin{pmatrix}
-I &   0\\
 0& -I\end{pmatrix}\rangle\simeq \mu_2$.
\end{itemize}
 \end{lemma}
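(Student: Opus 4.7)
The strategy is a direct computation inside $\GSp(W)$. For (1), observe that $\iota(u)$ for $u\in U_{1}$ and $\iota(x)$ for $x\in\mathfrak{f}^{2}$ are already scalar matrices, so $\iota(u)^{2}=uI_{2m}$ and $\iota(x)^{2}=xI_{2m}$ are trivially in $\widetilde{F^{\times}}$. The non-trivial content is to extract the scalars $\varpiup I_{2m}$ and $-I_{2m}$ from the two remaining generators $\iota(-1)$ and $\iota(\varpiup)$. A block computation gives
$$\iota(\varpiup)^{2} = \begin{pmatrix} -\varpiup I & 0 \\ 0 & -\varpiup I\end{pmatrix}, \qquad \bigl(\iota(-1)\iota(\varpiup)\bigr)^{2} = \begin{pmatrix} \varpiup I & 0 \\ 0 & \varpiup I\end{pmatrix},$$
whose ratio is $\begin{pmatrix} -I & 0 \\ 0 & -I\end{pmatrix}$. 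These two identities place both $\varpiup I_{2m}$ and $-I_{2m}$ into $\widetilde{F^{\times}}$, and combining with the $uI_{2m}$ and $xI_{2m}$ above, any scalar $aI_{2m}$ with $a=a_{1}a_{2}a_{3}$ decomposed as in \S\ref{Ftimes} is a product of these, proving (1).

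For (2), one inclusion is immediate from (1): $-I_{2m}\in\widetilde{F^{\times}}$ and $\lambda(-I_{2m})=(-1)^{2}=1$, so $-I_{2m}\in D$. For the reverse, I would pass to the quotient $\widetilde{F^{\times}}/F^{\times}I_{2m}$, which is meaningful because (1) shows $F^{\times}I_{2m}\subseteq\widetilde{F^{\times}}$. A block computation yields $\iota(-1)\iota(\varpiup) = (-I_{2m})\,\iota(\varpiup)\iota(-1)$, while $\iota(-1)^{2}=I_{2m}$ and $\iota(\varpiup)^{2}\in F^{\times}I_{2m}$. Modulo $F^{\times}I_{2m}$ the two classes therefore commute and each has order dividing $2$, so the quotient is abelian of order at most $4$, with representatives $\{I_{2m}, \iota(-1), \iota(\varpiup), \iota(-1)\iota(\varpiup)\}$. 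The similitude descends to a homomorphism $\widetilde{F^{\times}}/F^{\times}I_{2m} \to F^{\times}/F^{\times 2}$ sending these representatives respectively to $1,-1,\varpiup,-\varpiup$.

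The hypothesis $|k_{F}|\equiv 3\,(\bmod\,4)$ now enters: it guarantees $-1\notin F^{\times 2}$, and together with $\varpiup\notin F^{\times 2}$ one obtains that the classes of $-1$ and $\varpiup$ are independent generators of $F^{\times}/F^{\times 2}\simeq(\Z/2)^{2}$. Hence the descended map is an isomorphism. Writing any $g\in D$ as $aI_{2m}\cdot\iota(-1)^{i}\iota(\varpiup)^{j}$ with $i,j\in\{0,1\}$, the constraint $\lambda(g)=1$ becomes $a^{2}(-1)^{i}\varpiup^{j}=1$, and the independence of $-1,\varpiup$ modulo squares forces $i=j=0$ and $a^{2}=1$, so $g=\pm I_{2m}$ and $D=\langle -I_{2m}\rangle$. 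The main obstacle is the bookkeeping in (2), namely verifying that the commutator of $\iota(-1)$ and $\iota(\varpiup)$ lies in the central scalar subgroup $F^{\times}I_{2m}$; once this is known, the mod-scalar quotient is small enough that the similitude map detects everything, and the residue-field hypothesis is exactly what prevents any unwanted collapse.
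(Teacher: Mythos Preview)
Your proof is correct and follows essentially the same approach as the paper. The paper's own proof is extremely terse: it records only the two block computations $\iota(-1)\iota(\varpiup)=\begin{pmatrix}0&-I\\-\varpiup I&0\end{pmatrix}$ and $\iota(\varpiup)\iota(-1)=-\iota(-1)\iota(\varpiup)$, then immediately writes down the coset decomposition $\widetilde{F^{\times}}=F^{\times}\cup F^{\times}\iota(-1)\cup F^{\times}\iota(\varpiup)\cup F^{\times}\iota(-\varpiup)$ without further argument. Your version makes explicit exactly what the paper leaves implicit---the passage to the quotient $\widetilde{F^{\times}}/F^{\times}I_{2m}$, the identification of that quotient with $F^{\times}/F^{\times 2}$ via the similitude, and the use of $|k_F|\equiv 3\pmod 4$ to ensure $-1\notin F^{\times 2}$---but the underlying computations and logical structure are the same.
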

\begin{proof}
If $y\in F^{\times 2}$, then $\iota(y)$ belongs to the center  group of $\GSp(W)$.  $$\iota(-\varpiup)=\iota(-1)\iota(\varpiup)=\begin{pmatrix}
I& 0\\
 0& -I\end{pmatrix}\begin{pmatrix}
0& -I\\
\varpiup I& 0\end{pmatrix}=\begin{pmatrix}
0& -I\\
-\varpiup I& 0\end{pmatrix};$$
$$\iota(\varpiup)\iota(-1)=\begin{pmatrix}
0& -I\\
\varpiup I& 0\end{pmatrix}\begin{pmatrix}
I& 0\\
 0& -I\end{pmatrix}=\begin{pmatrix}
0& I\\
\varpiup I& 0\end{pmatrix}=-\iota(-1)\iota(\varpiup).$$
\end{proof}
Then there exists an  exact sequence:
\begin{equation}\label{exact2}
1 \longrightarrow D  \longrightarrow \widetilde{F^{\times}} \stackrel{\lambda}{\longrightarrow} F^{\times} \longrightarrow 1.
\end{equation}
We remark that $D\subseteq M$ (cf. Section \ref{In}).  Note that
$$\widetilde{F^{\times}}=F^{\times} \cup F^{\times}  \begin{pmatrix}
I& 0\\
0& -I\end{pmatrix} \cup F^{\times}  \begin{pmatrix}
0& -I\\
\varpiup I& 0\end{pmatrix} \cup  F^{\times} \begin{pmatrix}
0& -I\\
-\varpiup I& 0\end{pmatrix}.$$
There exists an exact sequence:
\begin{equation}\label{exact211}
1 \longrightarrow F^{\times}  \longrightarrow \widetilde{F^{\times}} \stackrel{\dot{\lambda}}{\longrightarrow} F^{\times}/F^{\times 2} \longrightarrow 1.
\end{equation}
For any $y\in \widetilde{F^{\times}}$, $g\in \Sp(W)$, let  us define $g^{y}=y^{-1}gy$. Under this conjugacy action, we obtain a semi-product group $\widetilde{F^{\times}} \ltimes \Sp(W)$, and an  exact sequence:
$$1 \longrightarrow \Delta_{D} \longrightarrow \widetilde{F^{\times}} \ltimes \Sp(W) \longrightarrow \GSp(W) \longrightarrow 1,$$
where $\Delta_{D}=\{ (y, y^{-1})\mid y\in D\}$.  
\begin{definition}
$ \PGSp^{\pm}(W) =[\widetilde{F^{\times}} \ltimes \Sp(W)]/[F^{\times} \times 1].$
\end{definition}
Then there exists an exact sequence:
$$1 \longrightarrow \Sp(W)  \to \PGSp^{\pm}(W) \to F^{\times}/F^{\times 2} \longrightarrow 1.$$
\subsection{} Barthel's work allows us to transfer the  conjugacy  action onto  $\Mp(W)$ and   obtain a group $\widetilde{F^{\times}} \ltimes \Mp(W)$, as well as an exact sequence:
$$ 1\longrightarrow \mu_8 \longrightarrow \widetilde{F^{\times}} \ltimes \Mp(W) \stackrel{\widetilde{p}}{\longrightarrow} \widetilde{F^{\times}} \ltimes \Sp(W) \longrightarrow 1.$$
Let $\widetilde{C}_M(-,-)$ denote the $2$-cocycle   associated to this exact sequence. For  $([y_1, g_1], \epsilon_1), ([y_2, g_2], \epsilon_2)\in \widetilde{F^{\times}} \ltimes \Mp(W)$,
\begin{equation}
\begin{split}
(y_1, g_1, \epsilon_1)\cdot (y_2, g_2, \epsilon_2)&=(y_1y_2, [g_1, \epsilon_1]^{y_2}[g_2, \epsilon_2])\\
& =(y_1y_2, [g_1^{y_2}, \nu(y_2, g_1)\epsilon_1][g_2, \epsilon_2])\\
&=(y_1y_2, [g_1^{y_2}g_2, \nu(y_2, g_1)c_{PR, X^{\ast}}(g_1^{y_2}, g_2)\epsilon_1\epsilon_2]).
\end{split}
\end{equation}
Hence:
\begin{equation}\label{equations4mod1}
\widetilde{C}_M([y_1,g_1], [y_2, g_2]) =\nu( y_2, g_1)c_{PR, X^{\ast}}(g_1^{y_2}, g_2),   \quad\quad [y_i, g_i]\in \widetilde{F^{\times}} \ltimes \Sp(W).
\end{equation}
\subsection{$\nu(-,-)$} Let's find out  the explicit expression of $\nu(-,-)$.
\begin{itemize}
 \item If $y\in D$,  by Example \ref{example1},  $\nu(y,g)=c_{PR, X^{\ast}}(y^{-1}, g y)c_{PR, X^{\ast}}(g,y)=1$.
 \item If $x=zy=yz$, for some $z\in \widetilde{F^{\times}}$, and $y\in D$,  then  by Lemma \ref{twoau},   $\nu(x,g)=\nu(z,g)\nu(y,g^z)=\nu(z,g)$.
 \item If $y\in \iota(F^{\times 2})$,  then $g^y=g$,  so $\nu(y,-)$ is a character of $\Sp(W)$, and then $\nu(y,g)\equiv 1$.
 \item If $x=zy=yz$,   for some $z\in \widetilde{F^{\times}}$, and $y\in  \iota(F^{\times 2})$,  then  $\nu(x,g)=\nu(z,g)\nu(y,g^z)=\nu(z,g)$.
 \item $\nu(-, -)$ is a function on $(F^{\times}/F^{\times 2})\times \Sp(W)$.
 \end{itemize}
Note that $F^{\times}/F^{\times 2}$ can be represented by $\{ \dot{1}, \dot{(-1)}, \dot{\varpiup}, \dot{(-\varpiup)}\}$.
\begin{lemma}\label{nu1}
 $$\nu(y,g)=\left\{ \begin{array}{ll}
 (\det a_1a_2, -1)_F \gamma(-1,\psi^{\frac{1}{2}})^{-|S|} &\textrm{ if } y=\iota(-1),\\
(\det a_1a_2, \varpiup)_F \gamma(\varpiup,\psi^{\frac{1}{2}})^{-|S|}c_{PR, X^{\ast}}(\omega^{-1}, g^{y_1} \omega)c_{PR, X^{\ast}}(g,\omega)& \textrm{ if } y=\iota(\varpiup)=y_1\omega,\\
(\det a_1a_2, -\varpiup)_F \gamma(-\varpiup,\psi^{\frac{1}{2}})^{-|S|}c_{PR, X^{\ast}}(\omega^{-1}, g^{y_2} \omega)c_{PR, X^{\ast}}(g,\omega)&\textrm{ if } y=\iota(-\varpiup)=y_2\omega,
\end{array}\right.$$
for $g=p_1\omega_S p_2$, $p_i=\begin{pmatrix}
a_i& b_i\\
0& d_i
\end{pmatrix}$, $i=1,2$.
\end{lemma}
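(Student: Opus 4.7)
The plan is to use the bullet points immediately preceding the statement to reduce to computing $\nu(y,g)$ at one representative of each of the three non-trivial cosets $\dot{(-1)}$, $\dot{\varpiup}$, $\dot{(-\varpiup)}$ of $F^{\times}/F^{\times 2}$; the identity class is handled already by the observation $\nu(y,g)\equiv 1$ for $y\in\iota(F^{\times 2})$. For each representative I would express $\iota(y)$ as a product of a ``Siegel-type'' element controlled by Example~\ref{example2} and, when necessary, the big Weyl element $\omega=\begin{pmatrix}0&-I\\ I&0\end{pmatrix}\in\Sp(W)$ controlled by Example~\ref{example1}; the two pieces are then glued by Lemma~\ref{twoau}.

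For $y=\iota(-1)=\begin{pmatrix} I&0\\ 0&-I\end{pmatrix}$ the element is already of the exact shape treated in Example~\ref{example2}, and substituting $y=-1$ into formula~(\ref{alpp}) gives the first line at once. For $y=\iota(\varpiup)$ I would use the decomposition $\iota(\varpiup)=y_1\omega$ with $y_1=\begin{pmatrix} I&0\\ 0&\varpiup I\end{pmatrix}$. Lemma~\ref{twoau} then yields
\[
\nu(\iota(\varpiup),g)=\nu(y_1,g)\,\nu(\omega,g^{y_1}),
\]
where Example~\ref{example2} supplies $\nu(y_1,g)=(\det a_1a_2,\varpiup)_F\gamma(\varpiup,\psi^{1/2})^{-|S|}$ and Example~\ref{example1} supplies $\nu(\omega,g^{y_1})=c_{PR,X^{\ast}}(\omega^{-1},g^{y_1}\omega)\,c_{PR,X^{\ast}}(g^{y_1},\omega)$. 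The case $y=\iota(-\varpiup)$ is handled by the entirely analogous decomposition $\iota(-\varpiup)=y_2\omega$ with $y_2=\begin{pmatrix} I&0\\ 0&-\varpiup I\end{pmatrix}$, which follows directly from the identity $\iota(-\varpiup)=\iota(-1)\iota(\varpiup)$ computed in Lemma~\ref{mod1}.

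The one step that is not pure bookkeeping -- and the part I expect to be the only real obstacle -- is the identity
\[
c_{PR,X^{\ast}}(g^{y_i},\omega)=c_{PR,X^{\ast}}(g,\omega),\qquad i=1,2,
\]
needed to convert the output of Example~\ref{example1} into the form appearing in the lemma. For this I would argue that $y_i$ acts on $X^{\ast}$ by a scalar (namely $\varpiup$ or $-\varpiup$), so $X^{\ast}y_i=X^{\ast}$ as a subspace of $W$; consequently
\[
X^{\ast}g^{y_i}=X^{\ast}y_i^{-1}gy_i=X^{\ast}gy_i=X^{\ast}g,
\]
and therefore the Leray invariants $q(X^{\ast},X^{\ast}\omega^{-1},X^{\ast}g^{y_i})$ and $q(X^{\ast},X^{\ast}\omega^{-1},X^{\ast}g)$ coincide, forcing the corresponding Perrin--Rao cocycle values to agree. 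Once this identity is in place, multiplying the two contributions from Examples~\ref{example1} and~\ref{example2} and collecting terms produces the second and third lines of the lemma.
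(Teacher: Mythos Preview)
Your overall strategy coincides with the paper's: for $y=\iota(-1)$ apply Example~\ref{example2} directly, and for $y=\iota(\pm\varpiup)$ factor $y=y_i\omega$ and combine Lemma~\ref{twoau} with Examples~\ref{example1} and~\ref{example2}. The paper's proof is essentially a one-line application of exactly these ingredients, so on the level of method there is nothing to distinguish.

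There is, however, a genuine gap in your justification of $c_{PR,X^{\ast}}(g^{y_i},\omega)=c_{PR,X^{\ast}}(g,\omega)$. Your chain of equalities ends with ``$X^{\ast}gy_i=X^{\ast}g$'', but this last step is false: that $y_i$ stabilises $X^{\ast}$ does \emph{not} force it to stabilise an arbitrary Lagrangian $X^{\ast}g$. Concretely, for $\dim W=2$ and $g=\begin{pmatrix}a&b\\c&d\end{pmatrix}$ with $c\neq 0$, one has $X^{\ast}g=F\cdot(c,d)$ while $X^{\ast}gy_1=F\cdot(c,\varpiup d)$, which differ whenever $cd\neq 0$. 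In fact the underlying identity itself appears to fail: a direct computation with the $\SL_2$ formula $c_{PR,X^{\ast}}(g_1,g_2)=\gamma_{\psi}(\tfrac12 c_1c_2c_3)$ gives $c_{PR,X^{\ast}}(g^{y_1},\omega)=\gamma_{\psi}(\tfrac{cd}{2\varpiup})$ versus $c_{PR,X^{\ast}}(g,\omega)=\gamma_{\psi}(\tfrac{cd}{2})$, which differ by a nontrivial factor. What Example~\ref{example1} literally produces is $c_{PR,X^{\ast}}(g^{y_i},\omega)$; the paper simply records the formula with $c_{PR,X^{\ast}}(g,\omega)$ (and in the parallel Lemma~\ref{nu1mod41} asserts the identity without argument). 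So either an extra simplification beyond your Leray argument is required, or the last factor in the displayed formula should read $c_{PR,X^{\ast}}(g^{y_i},\omega)$ --- which is harmless for the downstream uses in Lemma~\ref{gg0mod43}, where $g\in M$ forces both sides to be $1$.
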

\begin{proof}
1) It follows from (\ref{alpp}).\\
2)  By Examples \ref{example1},\ref{example2} and Lemma \ref{twoau},
 $$\nu(y,g)=\nu(y_1,g)\nu(\omega, g^{y_1})=(\det a_1a_2, \varpiup)_F \gamma(\varpiup, \psi^{\frac{1}{2}})^{-|S|}c_{PR, X^{\ast}}(\omega^{-1}, g^{y_1} \omega)c_{PR, X^{\ast}}(g,\omega).$$
 3) Similarly to (2).
\end{proof}

\begin{lemma}
$\nu(y,\omega_S)=\gamma(\lambda_{y},\psi^{\frac{1}{2}})^{-|S|}$.
\end{lemma}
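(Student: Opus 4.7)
The plan is to specialise Lemma \ref{nu1} to $g=\omega_S$ and show that the auxiliary cocycle factors collapse to $1$. First I would note that both sides of the claim factor through $F^{\times}/F^{\times 2}$: the left-hand side by the bullet list preceding Lemma \ref{nu1}, and the right-hand side because $\gamma(\cdot,\psi^{\frac{1}{2}})$ depends only on the square class. It therefore suffices to verify the identity on the four representatives $\{\iota(1),\iota(-1),\iota(\varpiup),\iota(-\varpiup)\}$ of $F^{\times}/F^{\times 2}$.

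For $g=\omega_S$ one has $p_1=p_2=I$, so $a_1=a_2=I$ and every Hilbert symbol of the form $(\det a_1a_2,\cdot)_F$ in Lemma \ref{nu1} is trivial. This settles the cases $y=\iota(\pm 1)$ at once, giving $\nu(\iota(-1),\omega_S)=\gamma(-1,\psi^{\frac{1}{2}})^{-|S|}$. For $y=\iota(\pm\varpiup)$ the remaining task is to prove
\[
c_{PR,X^{\ast}}(\omega^{-1},\omega_S^{y_1}\omega)\cdot c_{PR,X^{\ast}}(\omega_S,\omega)=1,
\]
where $y_1=\iota(\pm\varpiup)\omega^{-1}$. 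A direct matrix computation identifies $y_1$ with the standard Levi element $\begin{pmatrix}I & 0\\ 0 & \pm\varpiup I\end{pmatrix}$ in $\GSp(W)$. Because this element normalises the Siegel parabolic, conjugation by $y_1$ preserves the Bruhat cell $C_{|S|}$, and an explicit computation gives $\omega_S^{y_1}=m_1\omega_S$ for a diagonal $m_1\in M$ (acting as $\pm\varpiup$ on the coordinates in $S$ and as $1$ on the rest of $X$, and the inverse on $X^{\ast}$).

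To finish, I would evaluate the two Perrin--Rao values via their Leray invariants. Using $X^{\ast}\omega^{-1}=X$, the identity $X^{\ast}\omega_S=\Span(e_i:i\in S)\oplus\Span(e_i^{\ast}:i\notin S)$, and the fact that $m_1$ preserves both $X$ and $X^{\ast}$, one checks that in each of the two triples $(L_1,L_2,L_3)$ the middle Lagrangian is the internal direct sum of its intersections with the two outer ones. This forces the Leray quadratic form to live on the zero vector space, so its Weil index is $1$. Combining gives $\nu(\iota(\pm\varpiup),\omega_S)=\gamma(\pm\varpiup,\psi^{\frac{1}{2}})^{-|S|}=\gamma(\lambda_y,\psi^{\frac{1}{2}})^{-|S|}$, as claimed.

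The main obstacle is the Leray invariant calculation: keeping the right-action conventions, the block descriptions of $\omega_S$, $\omega$, $y_1$ and $m_1$, and the definition of $q(X^{\ast},X^{\ast}g_2^{-1},X^{\ast}g_1)$ all consistent. Once the Lagrangians are written out explicitly the verification is routine, amounting to the observation that both triples degenerate so that the associated Leray form is defined on a zero-dimensional space.
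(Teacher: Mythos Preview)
Your approach is essentially the same as the paper's: specialise Lemma \ref{nu1} to $g=\omega_S$, observe that $a_1=a_2=I$ kills the Hilbert-symbol factors, and argue that the two Perrin--Rao cocycle values are~$1$. The paper simply asserts $c_{PR,X^{\ast}}(\omega^{-1},\omega_S^{y_1}\omega)\,c_{PR,X^{\ast}}(\omega_S,\omega)=1$ without further comment, whereas you supply the Leray-invariant justification, which is a welcome addition.

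One small correction to your Leray computation: for the triple attached to $c_{PR,X^{\ast}}(\omega_S,\omega)$, namely $(X^{\ast},\,X,\,X^{\ast}\omega_S)$, it is \emph{not} the middle Lagrangian that decomposes. Indeed $X\cap X^{\ast}=0$ and $X\cap X^{\ast}\omega_S=\Span(e_i:i\in S)$, which does not fill out $X$ unless $S$ is everything. Rather, it is the third Lagrangian $L_3=X^{\ast}\omega_S$ that satisfies $L_3=(L_3\cap X^{\ast})\oplus(L_3\cap X)$. Since the Leray invariant is symmetric under permutation of the three Lagrangians (up to a sign that does not affect $\gamma=1$), your conclusion still holds; just adjust which Lagrangian you check. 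For the second triple your claim about the middle Lagrangian is correct as stated.
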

\begin{proof}
If $\dot{y}=\dot{1} \in F^{\times}/F^{\times2}$, then the result holds. If $y=\iota(\varpiup)=y_1\omega$, then $$\nu(y,\omega_S)= \gamma(\varpiup,\psi^{\frac{1}{2}})^{-|S|}c_{PR, X^{\ast}}(\omega^{-1}, \omega_S^{y_1} \omega)c_{PR, X^{\ast}}(\omega_S,\omega)=\gamma(\varpiup,\psi^{\frac{1}{2}})^{-|S|}.$$
This applies to the other elements as well.
\end{proof}
\begin{lemma}\label{gg0mod43}
For $g_0=\begin{pmatrix}
a& 0\\
0& (a^{\ast})^{-1}
\end{pmatrix}\in M$, $g\in \Sp(W)$, $y\in \widetilde{F^{\times}}$, $\lambda_y\in F^{\times}$, we have:
\begin{itemize}
\item[(1)] $\nu(y, g_0)=(\det a,\lambda_y)_F$,
\item[(2)] $\nu(y, g_0g)=\nu(y, g_0) \nu(y, g)$,
\item[(3)] $\nu(y, gg_0)= \nu(y, g)\nu(y, g_0)$.
\end{itemize}
\end{lemma}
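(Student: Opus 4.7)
The plan is to handle Part (1) by a direct application of Lemma \ref{nu1}, and to deduce Parts (2) and (3) from the cocycle identity (\ref{crao}) using a triviality principle for the Perrin--Rao cocycle on $P$.

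For Part (1), I write the Bruhat decomposition $g_0 = g_0 \cdot \omega_{\emptyset} \cdot I$, so that $|S| = 0$ and $\det(a_1 a_2) = \det a$; this trivializes every Weil-index factor $\gamma(\cdot,\psi^{1/2})^{-|S|}$ appearing in Lemma \ref{nu1}. For $y$ with $\lambda_y \in F^{\times 2}$ (i.e.\ $y \in F^{\times}\cdot\iota(F^{\times 2})$), the bullets preceding Lemma \ref{nu1} give $\nu(y, g_0) = 1 = (\det a, \lambda_y)_F$. For $y = \iota(-1)$ (so $\lambda_y = -1$), Lemma \ref{nu1} reduces directly to $(\det a, -1)_F$. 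For $y = \iota(\pm\varpiup) = y_j\omega$ I compute $y_j = \left(\begin{smallmatrix} I & 0 \\ 0 & \pm\varpiup I \end{smallmatrix}\right)$ explicitly; this block-scalar element commutes with $g_0$, so $g_0^{y_j} = g_0$, and it remains to verify that the two Perrin--Rao factors $c_{PR,X^{\ast}}(\omega^{-1}, g_0\omega)$ and $c_{PR,X^{\ast}}(g_0, \omega)$ equal $1$. This follows from the Leray triples $(X^{\ast},\, X^{\ast}\omega^{-1}g_0^{-1},\, X^{\ast}\omega^{-1}) = (X^{\ast}, X, X)$ and $(X^{\ast},\, X^{\ast}\omega^{-1},\, X^{\ast} g_0) = (X^{\ast}, X, X^{\ast})$: each triple has a repeated Lagrangian, so the Leray invariant is the zero quadratic form, on which $\gamma(\psi(0)) = 1$.

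For Parts (2) and (3), the key point is that $\widetilde{F^{\times}}$ normalizes $M$: modulo $F^{\times}$-scalars, $\widetilde{F^{\times}}$ is generated by $\iota(-1) \in M$ and by $\iota(\pm\varpiup) = y_j\omega$ with $y_j \in M$, and conjugation by $\omega$ merely exchanges the two diagonal blocks $a$ and $(a^{\ast})^{-1}$ of $g_0$. Hence $g_0^y \in M \subseteq P$ for every $y \in \widetilde{F^{\times}}$. Applying (\ref{crao}) with $(g,g') = (g_0, g)$ (for Part (2)) and $(g,g') = (g, g_0)$ (for Part (3)), and noting that $c_{PR,X^{\ast}}(h_1,h_2) = \gamma(\psi(q(h_1,h_2)/2))$ equals $1$ whenever $h_1$ or $h_2$ lies in $P$ --- because then either $L_3 = L_1 = X^{\ast}$ or $L_2 = L_1 = X^{\ast}$ in the Leray triple $q(X^{\ast}, X^{\ast}h_2^{-1}, X^{\ast}h_1)$, degenerating $q$ to the zero form --- all four Perrin--Rao cocycle instances in (\ref{crao}) (original and conjugated) equal $1$. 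The identity therefore collapses to the required multiplicativity statements $\nu(y, g_0)\nu(y, g) = \nu(y, g_0g)$ and $\nu(y, g)\nu(y, g_0) = \nu(y, g g_0)$.

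The only genuine calculation is the identification of $y_1, y_2$ and the verification that $g_0^{y_j} = g_0$ in the $\iota(\pm\varpiup)$ cases of Part (1); everything else is a uniform consequence of the principle that $c_{PR,X^{\ast}}$ equals $1$ whenever a parabolic element is involved, combined with (\ref{crao}).
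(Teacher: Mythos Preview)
Your proof is correct. Part~(1) follows the paper's route (direct appeal to Lemma~\ref{nu1}), though you spell out the Leray triples more explicitly than the paper does.

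For Parts~(2) and~(3) you take a genuinely different and cleaner route. The paper argues via the explicit Bruhat form of $\nu$ in Lemma~\ref{nu1}: writing $g=p_1\omega_S p_2$, it computes $\nu(y,g_0g)$ from the decomposition $g_0g=(g_0p_1)\omega_Sp_2$ and then reduces to verifying four cocycle identities (its equations (\ref{PR1})--(\ref{PR4})) by direct manipulation of $c_{PR,X^{\ast}}$. You instead invoke the defining relation~(\ref{crao}) once, together with two structural observations: (i) $\widetilde{F^{\times}}$ normalizes $M$, so $g_0^y\in P$; and (ii) $c_{PR,X^{\ast}}(h_1,h_2)=1$ whenever either $h_i\in P$, because the Leray triple degenerates. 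This collapses~(\ref{crao}) immediately to the desired multiplicativity. Your argument is shorter, treats all $y$ uniformly (no separate case analysis for $\iota(-1)$, $\iota(\varpiup)$, $\iota(-\varpiup)$), and isolates the single reason the lemma holds --- namely that parabolic elements kill the Perrin--Rao cocycle on both sides of~(\ref{crao}). The paper's approach, by contrast, stays closer to the explicit formulas and makes the verification more concrete but longer.
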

\begin{proof}
1)  It can be deduced from the formulas presented in Lemma \ref{nu1}.  \\
2) Let us write $g=p_1\omega_S p_2$.   Then $g_0g=g_0p_1\omega_S p_2$. So the result is right for $y=\iota(-1)$.   For $y=\iota(\varpiup)=y_1\omega$,
$$\nu(y,g)=(\det aa_1a_2, \varpiup)_F \gamma(\varpiup,\psi^{\frac{1}{2}})^{-|S|}c_{PR, X^{\ast}}(\omega^{-1}, (g_0g)^{y_1} \omega)c_{PR, X^{\ast}}(g_0g,\omega).$$
Note:
\begin{equation}\label{PR1}
\begin{split}
c_{PR, X^{\ast}}(\omega^{-1}, (g_0g)^{y_1} \omega)& =c_{PR, X^{\ast}}(\omega^{-1}, g_0^{y_1})c_{PR, X^{\ast}}(\omega^{-1}, (g_0g)^{y_1} \omega)\\
&=c_{PR, X^{\ast}}(\omega^{-1} g_0^{y_1},g^{y_1} \omega )c_{PR, X^{\ast}}( g_0^{y_1}, g^{y_1} \omega)\\
&=c_{PR, X^{\ast}}(\omega^{-1} g_0^{y_1},g^{y_1} \omega )=c_{PR, X^{\ast}}([\omega^{-1} g_0^{y_1}\omega] \omega^{-1},g^{y_1} \omega )\\
&=c_{PR, X^{\ast}}([\omega^{-1} g_0^{y_1}\omega] \omega^{-1},g^{y_1} \omega )c_{PR, X^{\ast}}([\omega^{-1} g_0^{y_1}\omega], \omega^{-1} )\\
&=c_{PR, X^{\ast}}([\omega^{-1} g_0^{y_1}\omega], \omega^{-1}g^{y_1} \omega  )c_{PR, X^{\ast}}(\omega^{-1}, g^{y_1} \omega  )\\
&=c_{PR, X^{\ast}}(\omega^{-1}, g^{y_1} \omega  );
\end{split}
\end{equation}
\begin{equation}\label{PR2}
\begin{split}
c_{PR, X^{\ast}}(g_0g,\omega)=c_{PR, X^{\ast}}(g,\omega).
\end{split}
\end{equation}
Hence, the result is right for $y=\iota(\varpiup)$.  If $y=\iota(-\varpiup)=\iota(-1)\iota(\varpiup)$,
by Lemma \ref{twoau}, $$\nu(y, g_0g)=\nu(-1, g_0g)\nu(\varpiup, (g_0g)^{\iota(-1)})$$
$$=\nu(-1, g_0)\nu(\varpiup, g_0^{\iota(-1)})\nu(-1, g)\nu(\varpiup, g^{\iota(-1)})$$
$$=\nu(y, g_0)\nu(y, g).$$
3)The proof is similar to the one above. Let us verify the same equalities as (\ref{PR1}) and (\ref{PR2}).
\begin{equation}\label{PR3}
\begin{split}
c_{PR, X^{\ast}}(\omega^{-1}, (gg_0)^{y_1} \omega)&=c_{PR, X^{\ast}}(\omega^{-1}, [g^{y_1} \omega]\cdot [\omega^{-1}g_0^{y_1}\omega])\\
&=c_{PR, X^{\ast}}(\omega^{-1}, [g^{y_1} \omega]\cdot [\omega^{-1}g_0^{y_1}\omega])c_{PR, X^{\ast}}(g^{y_1} \omega,\omega^{-1}g_0^{y_1}\omega)\\
&=c_{PR, X^{\ast}}(\omega^{-1}, g^{y_1} \omega)c_{PR, X^{\ast}}(\omega^{-1}g^{y_1} \omega,\omega^{-1}g_0^{y_1}\omega)\\
&=c_{PR, X^{\ast}}(\omega^{-1}, g^{y_1} \omega);
\end{split}
\end{equation}
\begin{equation}\label{PR4}
\begin{split}
c_{PR, X^{\ast}}(gg_0,\omega)&=c_{PR, X^{\ast}}(g,g_0)c_{PR, X^{\ast}}(gg_0,\omega)\\
&=c_{PR, X^{\ast}}(g, g_0 \omega)c_{PR, X^{\ast}}(g_0,  \omega)\\
&=c_{PR, X^{\ast}}(g, \omega [\omega^{-1}g_0 \omega])\\
&=c_{PR, X^{\ast}}(g, \omega [\omega^{-1}g_0 \omega])c_{PR, X^{\ast}}(\omega, [\omega^{-1}g_0 \omega])\\
&=c_{PR, X^{\ast}}(g, \omega) c_{PR, X^{\ast}}(g \omega,\omega^{-1}g_0 \omega)\\
&=c_{PR, X^{\ast}}(g, \omega).
\end{split}
\end{equation}
\end{proof}
\subsubsection{Example} If $\dim W=2$, $\Sp(W)\simeq \SL_2(F)$. Recall $\dot{\lambda}: \widetilde{F^{\times}} \longrightarrow F^{\times}/F^{\times 2}$, and  $\nu(-, -)$ is a function on $F^{\times}/F^{\times 2} \times \Sp(W)$. By Lemma \ref{nu1},
$$\nu(y,g)=\left\{ \begin{array}{ll}
(a, -1)_F &\textrm{ if } \dot{(\lambda_y)}=\dot{(-1)},\\
(a, \varpiup)_F\gamma_{\psi}(\tfrac{1}{2} ab\varpiup)& \textrm{ if } \dot{(\lambda_y)}=\dot{\varpiup},\\
(a,-\varpiup)_F\gamma_{\psi}(-\tfrac{1}{2} ab\varpiup) &\textrm{ if } \dot{(\lambda_y)}=\dot{(-\varpiup)},
\end{array}\right. \quad\quad\quad  \textrm{ for }g=\begin{bmatrix} a& b \\ 0 & a^{-1}\end{bmatrix};$$
$$\nu(y,g)=\left\{ \begin{array}{ll}
(c, -1)_F \gamma(-1,\psi^{\frac{1}{2}})^{-1} &\textrm{ if } \dot{(\lambda_y)}=\dot{(-1)},\\
(c, \varpiup)_F\gamma(\varpiup,\psi^{\frac{1}{2}})^{-1}\gamma_{\psi}(\tfrac{1}{2} bd\varpiup)\gamma_{\psi}(\tfrac{1}{2} cd)& \textrm{ if } \dot{(\lambda_y)}=\dot{\varpiup},\\
(c,-\varpiup)_F\gamma(-\varpiup,\psi^{\frac{1}{2}})^{-1}\gamma_{\psi}(-\tfrac{1}{2} bd\varpiup)\gamma_{\psi}(\tfrac{1}{2} cd) &\textrm{ if } \dot{(\lambda_y)}=\dot{(-\varpiup)},
\end{array}\right. \quad \quad \quad \textrm{ for }g=\begin{bmatrix} a& b \\ c & d\end{bmatrix}, c\neq 0.$$

\subsection{$\PGMp^{\pm}(W)$}
\begin{lemma}\label{cocomod41}
For $[y_1, g_1]\in \Delta_D$, with $g_1=y_1^{-1}= \begin{bmatrix}
t& \\
& t^{-1}\end{bmatrix}$,  $[y_2, g_2]\in \widetilde{F^{\times}} \ltimes \Sp(W)$,
$\widetilde{C}_M([y_1,g_1], [y_2, g_2]) =( t^m, \lambda_{y_2})_F$, and $\widetilde{C}_M([y_2,g_2], [y_1, g_1]) = 1$.
\end{lemma}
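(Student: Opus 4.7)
The plan is to evaluate $\widetilde{C}_M$ directly from formula \eqref{equations4mod1} in each of the two cases. First note that $[y_1, g_1] \in \Delta_D$ with $g_1 = y_1^{-1}$ forces $g_1 \in D = \{\pm I_{2m}\}$ by Lemma \ref{mod1}(2); writing $g_1 = \diag(tI_m, t^{-1}I_m)$ with $t \in \{\pm 1\}$ puts $g_1$ into the Levi $M$ with Levi block $a = tI_m$, and makes $g_1$ central in $\GSp(W)$.

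For the first identity, I would expand
$$\widetilde{C}_M([y_1, g_1], [y_2, g_2]) = \nu(y_2, g_1)\, c_{PR, X^{\ast}}(g_1^{y_2}, g_2).$$
Since $g_1 \in M$ has Levi block $a = tI_m$, Lemma \ref{gg0mod43}(1) gives $\nu(y_2, g_1) = (\det(tI_m), \lambda_{y_2})_F = (t^m, \lambda_{y_2})_F$. For the Perrin--Rao factor, centrality of $g_1$ forces $g_1^{y_2} = g_1$ and $X^{\ast} g_1 = X^{\ast}$, so the Leray invariant $q(g_1, g_2) = q(X^{\ast}, X^{\ast} g_2^{-1}, X^{\ast})$ is trivial (two of the three Lagrangians coincide), whence $c_{PR, X^{\ast}}(g_1^{y_2}, g_2) = 1$.

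For the second identity, I would expand
$$\widetilde{C}_M([y_2, g_2], [y_1, g_1]) = \nu(y_1, g_2)\, c_{PR, X^{\ast}}(g_2^{y_1}, g_1).$$
The first factor equals $1$ by the bullet list preceding Lemma \ref{nu1}, which asserts $\nu(y, g) = 1$ for all $y \in D$. The second factor vanishes by the same Leray-invariant argument: $X^{\ast} g_1 = X^{\ast}$ forces $q(g_2^{y_1}, g_1) = q(X^{\ast}, X^{\ast} g_1^{-1}, X^{\ast} g_2^{y_1}) = q(X^{\ast}, X^{\ast}, X^{\ast} g_2^{y_1}) = 0$, so $c_{PR, X^{\ast}}(g_2^{y_1}, g_1) = 1$.

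The crux is simply recognising that the non-trivial element of $D$ is scalar, so it is both central in $\GSp(W)$ and stabilises $X^{\ast}$; once that is observed, the $\nu$-factors come from Lemma \ref{gg0mod43} (respectively the $y \in D$ bullet) and both Perrin--Rao factors collapse to the trivial Weil index, so no substantive obstacle remains.
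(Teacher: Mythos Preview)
Your proposal is correct and follows essentially the same approach as the paper's proof: both expand $\widetilde{C}_M$ via \eqref{equations4mod1}, use Lemma~\ref{gg0mod43}(1) (respectively the $y\in D$ bullet) to evaluate the $\nu$-factors, and observe that the Perrin--Rao factors vanish because $g_1\in M$ stabilizes $X^{\ast}$. Your write-up is simply more explicit about the Leray-invariant justification, which the paper leaves implicit.
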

\begin{proof}
By (\ref{equations4mod1}), $$\widetilde{C}_M([y_1,g_1], [y_2, g_2])=\nu( y_2, g_1)c_{PR, X^{\ast}}(g_1^{y_2}, g_2)=\nu(y_2, g_1)=( t^m, \lambda_{y_2})_F,$$
 $$\widetilde{C}_M([y_2,g_2], [y_1, g_1])=\nu( y_1, g_2   )c_{PR, X^{\ast}}(g_2^{y_1}, g_1)=\nu( y_1, g_2   )=1.$$
\end{proof}
\begin{corollary}
$\widetilde{C}_M([y_2,g_2], [y_1, g_1]) = 1$, for $[y_i, g_i]\in \Delta_D$.
\end{corollary}
Hence, there exists the following exact sequence:
\begin{align}\label{mu8}
1 \longrightarrow \mu_8\times \Delta_D \longrightarrow \widetilde{F^{\times}} \ltimes \Mp(W) \stackrel{p_1}{\longrightarrow} \GSp(W) \longrightarrow 1.
\end{align}

\begin{lemma}\label{cent3mdulo4}
Let $Z$ denote the center group  of $\widetilde{F^{\times}} \ltimes \Mp(W)$. Then:
 $Z=\left\{\begin{array}{lr} (F^{\times} \times 1) \times \mu_8 & 2\nmid m,\\  (F^{\times} \times \{\pm 1\})\times  \mu_8 &  2\mid m. \end{array}\right. $
\end{lemma}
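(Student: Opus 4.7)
The plan is to identify central elements of $\widetilde{F^{\times}}\ltimes\Mp(W)$ by first determining the center of the quotient $\widetilde{F^{\times}}\ltimes\Sp(W)$ and then testing which lifts are central against the cocycle $\widetilde{C}_M$ from (\ref{equations4mod1}). Since $\mu_8$ is the kernel of the covering, it is automatically central.

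I would first show the center of $\widetilde{F^{\times}}\ltimes\Sp(W)$ equals $F^{\times}\times\{\pm I\}$. The commutation $(y_1,g_1)(1,g_2)=(1,g_2)(y_1,g_1)$ reads $g_1g_2g_1^{-1}=g_2^{y_1}$, so $y_1g_1^{-1}$ centralizes $\Sp(W)$ in $\GSp(W)$ and is scalar, say $y_1g_1^{-1}=c\in F^{\times}$. Then $g_1=c^{-1}y_1\in\widetilde{F^{\times}}\cap\Sp(W)=\{\pm I\}$ by Lemma \ref{mod1}(2), whence $y_1\in F^{\times}$. Conversely, $F^{\times}\times\{\pm I\}$ is automatically central since both factors are scalar matrices.

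For a lift $[y_1,g_1,\epsilon]$ with $(y_1,g_1)\in F^{\times}\times\{\pm I\}$, centrality is equivalent to the cocycle symmetry $\widetilde{C}_M([y_1,g_1],[y_2,g_2])=\widetilde{C}_M([y_2,g_2],[y_1,g_1])$ for all $[y_2,g_2]$. When $g_1=I$, both sides equal $1$ using $c_{PR,X^{\ast}}(I,-)\equiv 1$ together with the observation before Lemma \ref{nu1} that $\nu(y_1,-)\equiv 1$ whenever $\lambda_{y_1}\in F^{\times 2}$ (true for $y_1=aI$ since $\lambda_{y_1}=a^2$). This gives $(F^{\times}\times 1)\times\mu_8\subseteq Z$ unconditionally.

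The substantive case is $g_1=-I$, where the symmetry becomes $\nu(y_2,-I)\,c_{PR,X^{\ast}}(-I,g_2)=c_{PR,X^{\ast}}(g_2,-I)$. I would first establish $c_{PR,X^{\ast}}(-I,g_2)=c_{PR,X^{\ast}}(g_2,-I)$: by (\ref{28inter}) the ratio collapses to $c(-I,g_2)/c(g_2,-I)$, and Rao's formula with $x(-I)=(-1)^m$, $x(-g_2)=(-1)^m x(g_2)$, vanishing Leray invariant $q(X^{\ast},X^{\ast}g_2^{-1},X^{\ast})=0$ (two flags coincide), and $t=0$ (since $j(-g_2)=j(g_2)$ as $-I$ is scalar and central in $\GSp(W)$) reduces both $c(-I,g_2)$ and $c(g_2,-I)$ to $((-1)^m,x(g_2))_F$ after invoking $(a,-a)_F=1$. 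The condition then becomes $\nu(y_2,-I)=((-1)^m,\lambda_{y_2})_F=1$ for all $y_2$ by Lemma \ref{gg0mod43}(1). For $m$ even this is automatic; for $m$ odd, $|k_F|\equiv 3\pmod 4$ gives $-1\notin F^{\times 2}$, so the choice $y_2=\iota(-1)$ (with $\lambda_{y_2}=-1$) produces $(-1,-1)_F=-1$. Thus $[y_1,-I,\epsilon]\in Z$ iff $m$ is even, yielding the stated dichotomy. The main obstacle is the Rao-formula computation of the $c_{PR,X^{\ast}}$-symmetry at $-I$; once this is in place, the rest is bookkeeping with $\nu$ and the Hilbert symbol.
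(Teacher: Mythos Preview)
Your overall strategy matches the paper's: reduce to candidates $(y_1,g_1)\in F^{\times}\times\{\pm I\}$ and test the cocycle symmetry $\widetilde{C}_M([y_1,g_1],[y_2,g_2])=\widetilde{C}_M([y_2,g_2],[y_1,g_1])$. However, there are two points to flag.

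First, your route to $c_{PR,X^{\ast}}(-I,g_2)=c_{PR,X^{\ast}}(g_2,-I)$ is more involved than necessary. Since $-I\in P$, we have $X^{\ast}(-I)=X^{\ast}$, so the Leray invariant $q(X^{\ast},X^{\ast}g_2^{-1},X^{\ast})$ (and likewise $q(X^{\ast},X^{\ast},X^{\ast}g_2)$) is trivially zero; hence both cocycle values are $1$, not merely equal. You actually state this vanishing, but then pass through Rao's formula for $c(-,-)$ rather than using it directly. The paper's proof exploits this implicitly, writing $\widetilde{C}_M([y_1,-1],[y_2,g_2])=\nu(y_2,-I)=((-1)^m,\lambda_{y_2})_F$ and $\widetilde{C}_M([y_2,g_2],[y_1,-1])=1$ in one line.

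Second, and this is a genuine error, your choice of test element fails. You claim $(-1,-1)_F=-1$ when $|k_F|\equiv 3\pmod 4$, but in fact $(-1,-1)_F=1$ for \emph{every} non-archimedean local field of odd residue characteristic: the form $x^2+y^2+z^2$ has a nontrivial zero over $k_F$ by Chevalley--Warning, which lifts by Hensel. The correct witness is $y_2=\iota(\varpiup)$ with $\lambda_{y_2}=\varpiup$: the tame symbol gives $(-1,\varpiup)_F=(-1)^{(|k_F|-1)/2}=-1$ precisely because $|k_F|\equiv 3\pmod 4$. With this correction your argument goes through and agrees with the paper's (the paper simply observes that $(\lambda_{y_2},(-1)^m)_F$ is not identically $1$ when $2\nmid m$, without naming a specific $y_2$).
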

\begin{proof}
 For any $[y_1, g_1]\in F^{\times} \times 1$, $[y_2, g_2]\in \widetilde{F^{\times}} \ltimes \Sp(W)$,
 $$\widetilde{C}_M([y_1,g_1], [y_2, g_2])=\nu( y_2, g_1)c_{PR, X^{\ast}}(g_1^{y_2}, g_2)=\nu( y_2, g_1)c_{PR, X^{\ast}}(g_1, g_2)=1;$$
$$\widetilde{C}_M([y_2,g_2], [y_1, g_1])=\nu( y_1, g_2)c_{PR, X^{\ast}}(g_2^{y_1}, g_1)=\nu( y_1, g_2)=1.$$
Hence $ (F^{\times}\times 1) \times \mu_8 $ belongs to the centers of  $\widetilde{F^{\times}} \ltimes \Mp(W)$.  Conversely, the other possible  central elements should be $[y_1, -1]$, for $y_1\in F^{\times}$. Then:
$$\widetilde{C}_M([y_2,g_2], [y_1, -1])=\nu( y_1, g_2)=1, \quad \widetilde{C}_M([y_1,-1], [y_2, g_2])=(\lambda_{y_2}, (-1)^m)_F.$$
So the result holds.
\end{proof}
From the proof, we have:
$$\widetilde{C}_M([y_1,g_1], [y_2, g_2])=1=\widetilde{C}_M([y_2,g_2], [y_1, g_1])$$
for $[y_1,g_1]\in F^{\times} \times 1$. Therefore, $\widetilde{C}_M(-,-)$ also defines a cocycle on $\PGSp^{\pm}(W)$. Let us define:
$$\PGMp^{\pm}(W)=[\widetilde{F^{\times}} \ltimes \Mp(W)]/[F^{\times} \times 1_{\Sp(W)}].$$
Then $\PGMp^{\pm}(W)$ is a central extension of $\PGSp^{\pm}(W)$ by $\mu_8$ associated with $\widetilde{C}_M(-,-)$. Moreover, there exists an exact sequence:
$$1 \to \Mp(W) \to \PGMp^{\pm}(W) \stackrel{\dot{\lambda}}{\longrightarrow} F^{\times}/F^{\times 2} \to 1.$$
\subsubsection{Example} Consider $\dim W=2$, $\Sp(W)\simeq \SL_2(F)$. Write 
$$\iota(1)= \begin{pmatrix}
1&0\\
0& 1\end{pmatrix}, \iota(-1)= \begin{pmatrix}
1&0\\
0& -1\end{pmatrix},  \iota(\varpiup)= \begin{pmatrix}
0& -1\\
\varpiup & 0\end{pmatrix}, \iota(-\varpiup)=\begin{pmatrix}
0& -1\\
-\varpiup & 0\end{pmatrix}.$$   Then $$\PGSp^{\pm}(W)=\{ [\iota(t), g]\mid t=-1, 1, \varpiup, -\varpiup, \textrm{ and } g \in \SL_2(F)\}.$$ 
$$\widetilde{C}_M([y_1,g_1], [y_2, g_2])=\nu(y_2, g_1)c_{PR, X^{\ast}}(g_1^{y_2}, g_2).$$ 
\subsection{ $\overline{\PGSp^{\pm}}(W)$}\label{case3mod43}
We can also  lift the action of $\widetilde{F^{\times}}$ from  $\Sp(W)$ to $\overline{\Sp}(W)$, and then  obtain a group $\widetilde{F^{\times}} \ltimes \overline{\Sp}(W)$ as well as an exact sequence:
$$ 1\longrightarrow \mu_2 \longrightarrow \widetilde{F^{\times}} \ltimes \overline{\Sp}(W) \stackrel{\widetilde{p}}{\longrightarrow} \widetilde{F^{\times}} \ltimes \Sp(W) \longrightarrow 1.$$
Let $\overline{C}_M(-,-)$ denote the $2$-cocycle   associated to this exact sequence. Similarly,
\begin{equation}\label{equations2}
\overline{C}_M([y_1,g_1], [y_2, g_2]) =\nu_2( y_2, g_1)c(g_1^{y_2}, g_2),   \quad\quad [y_i, g_i]\in \widetilde{F^{\times}} \ltimes \Sp(W).
\end{equation}

Let us also give the explicit expression of $\nu_2(y_2, g_1)$.  For $y\in \widetilde{F^{\times}}$, $g\in \Sp(W)$, according to Lemma \ref{nu2},
$$\nu_2( y, g)=\nu(y,g) \frac{m_{X^{\ast}}(g)}{m_{X^{\ast}}(g^{y})}.$$

\begin{lemma}
 $\nu_2(-,-)$ is a function on $F^{\times }/F^{\times 2}\times \Sp(W)$.
\end{lemma}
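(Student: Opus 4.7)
The plan is to reduce the claim to two facts we already have in hand: that $\nu(-,-)$ descends to a function on $F^{\times}/F^{\times 2}\times\Sp(W)$, and that the conjugation action of $\widetilde{F^{\times}}$ on $\Sp(W)$ factors through $\widetilde{F^{\times}}/F^{\times}$. Starting from Lemma \ref{nu2}, we write
\[
\nu_2(y,g)\;=\;\nu(y,g)\cdot\frac{m_{X^{\ast}}(g)}{m_{X^{\ast}}(g^{y})},
\]
so it suffices to prove both factors on the right depend only on $\dot{\lambda}(y)\in F^{\times}/F^{\times 2}$.

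For the first factor, I would simply invoke the bullet-point analysis already carried out in the subsection on $\nu(-,-)$, which established that $\nu(-,-)$ is a function on $(F^{\times}/F^{\times 2})\times \Sp(W)$. For the second factor, the key observation is that the kernel of $\dot{\lambda}\colon\widetilde{F^{\times}}\to F^{\times}/F^{\times 2}$ is precisely the scalar subgroup $F^{\times}\subseteq\widetilde{F^{\times}}$, whose image in $\GSp(W)$ lies in the center. Consequently, if $y_1,y_2\in\widetilde{F^{\times}}$ satisfy $\dot{\lambda}(y_1)=\dot{\lambda}(y_2)$, then $y_1y_2^{-1}\in F^{\times}$ commutes with every $g\in\Sp(W)$, forcing $g^{y_1}=g^{y_2}$ and hence $m_{X^{\ast}}(g^{y_1})=m_{X^{\ast}}(g^{y_2})$. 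Combined with the previous step this yields $\nu_2(y_1,g)=\nu_2(y_2,g)$.

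I do not anticipate a genuine obstacle here; the only care needed is bookkeeping the definition of $\widetilde{F^{\times}}$ and confirming that $F^{\times}\subseteq\widetilde{F^{\times}}$ is indeed central in $\GSp(W)$, which follows because the chosen section $\iota$ sends $U_1$ and $\mathfrak{f}^2$ into scalar matrices and the remaining generators are handled through $\iota(-1)$, $\iota(\varpiup)$ whose products with scalars stay scalar up to elements whose class in $\widetilde{F^{\times}}/F^{\times}$ is detected by $\dot{\lambda}$. Once this identification is clear, the conclusion is a one-line consequence of the factorization above.
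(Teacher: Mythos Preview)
Your argument is correct and rests on the same two ingredients the paper uses: that $\nu(-,-)$ already descends to $F^{\times}/F^{\times 2}\times\Sp(W)$, and that the kernel $F^{\times}\subseteq\widetilde{F^{\times}}$ of $\dot{\lambda}$ consists of scalar matrices, so $g^{y}$ depends only on $\dot{\lambda}(y)$. The paper packages the second step slightly differently: rather than invoking the factorization from Lemma~\ref{nu2} directly, it first observes that $\nu_2(y,g)=\nu(y,g)$ for $y\in F^{\times}$ (since $g^{y}=g$), and then applies the multiplicative identity of Lemma~\ref{twoau} to $\nu_2$ to get $\nu_2(zy,g)=\nu_2(z,g)\nu_2(y,g^{z})=\nu_2(z,g)$ (and similarly for $yz$). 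Your route is marginally more direct since it avoids appealing to Lemma~\ref{twoau} for $\nu_2$, but the content is the same.
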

\begin{proof}
For $y\in  F^{\times}$, by (\ref{mx}), $m_{X^{\ast}}(g^y)=m_{X^{\ast}}(g)$, then  $\nu_2( y, g)=\nu(y,g)$, for such $y$.
 If $x=zy$, for some $z\in \widetilde{F^{\times}}$, and $y\in  F^{\times}$,  then  by Lemma \ref{twoau},   $\nu_2(x,g)=\nu_2(z,g)\nu_2(y,g^z)=\nu_2(z,g)$.
If $x=yz=z(z^{-1}yz)$, for some $z\in \widetilde{F^{\times}}$, and $y\in  F^{\times}$,  then  by Lemma \ref{twoau},   $\nu_2(x,g)=\nu_2(z,g)\nu_2( z^{-1}yz,g^{z})=\nu_2(z,g)$.
\end{proof}
\begin{lemma}
For $g_0=\begin{pmatrix}
a& 0\\
0& (a^{\ast})^{-1}
\end{pmatrix}\in M$, $g\in \Sp(W)$, $y\in \widetilde{F^{\times}}$, $\lambda_y\in F^{\times}$, we have:
\begin{itemize}
\item[(1)] $\nu_2(y, g_0)=(\det a,\lambda_y)_F$,
\item[(2)] $\nu_2(y, g_0g)=\nu_2(y, g_0) \nu_2(y, g)(\det a, x(g)x(g^y))_F$,
\item[(3)] $\nu_2(y, gg_0)= \nu_2(y, g)\nu_2(y, g_0)(\det a, x(g)x(g^y))_F$.
\end{itemize}
\end{lemma}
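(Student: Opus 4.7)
The plan is to reduce the lemma to its analog for $\nu$ (Lemma \ref{gg0mod43}) via the bridge formula Lemma \ref{nu2}, namely $\nu_2(y, h) = \nu(y, h) \cdot m_{X^{\ast}}(h) / m_{X^{\ast}}(h^y)$. Under this reduction, part (1) amounts to showing that the $m_{X^{\ast}}$-ratio equals $1$, while parts (2) and (3) amount to showing that the ratio contributes exactly the correction factor $(\det a, x(g) x(g^y))_F$ beyond what Lemma \ref{gg0mod43} already gives for $\nu$.

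The key computational inputs come from the Bruhat-cell description of the Rao functions. Since $g_0 \in M \subseteq P$, we have $j(g_0) = 0$ and $x(g_0) \equiv \det a \pmod{F^{\times 2}}$. A direct case-by-case check on the four coset representatives of $F^{\times}/F^{\times 2}$ (i.e.\ $y$ ranging over $\iota(\pm 1)$ and $\iota(\pm \varpiup)$) confirms that $g_0^y$ still lies in $M$ and satisfies $x(g_0^y) \equiv \det a$; for instance, for $y = \iota(\varpiup)$ one computes $g_0^y = \begin{pmatrix} (a^{\ast})^{-1} & 0 \\ 0 & a \end{pmatrix} \in M$. Next, writing $g = p_1 \omega_S p_2$ gives $g_0 g = (g_0 p_1) \omega_S p_2$ and $g g_0 = p_1 \omega_S (p_2 g_0)$, so in both cases $j$ is unchanged while the Rao invariant acquires a factor of $\det a$:
\[ x(g_0 g) \;\equiv\; x(g g_0) \;\equiv\; \det a \cdot x(g) \pmod{F^{\times 2}}. \]
Using the cocycle identity $\gamma(uv, \psi^{1/2}) = \gamma(u, \psi^{1/2}) \gamma(v, \psi^{1/2}) (u, v)_F$ in the formula (\ref{mx}), one then obtains
\[ m_{X^{\ast}}(g_0 g) \;=\; m_{X^{\ast}}(g_0)\, m_{X^{\ast}}(g)\, (\det a, x(g))_F, \]
together with the analogous identity for $(g_0 g)^y = g_0^y g^y$ with $x(g)$ replaced by $x(g^y)$ (and similarly on the other side for $g g_0$).

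Assembling the pieces: for (1), $m_{X^{\ast}}(g_0) = \gamma(\det a, \psi^{1/2})^{-1} = m_{X^{\ast}}(g_0^y)$, hence the ratio is trivial and $\nu_2(y, g_0) = \nu(y, g_0) = (\det a, \lambda_y)_F$ by Lemma \ref{gg0mod43}(1). For (2), dividing the two displayed $m_{X^{\ast}}$-identities yields
\[ \frac{m_{X^{\ast}}(g_0 g)}{m_{X^{\ast}}((g_0 g)^y)} \;=\; \frac{m_{X^{\ast}}(g_0)}{m_{X^{\ast}}(g_0^y)} \cdot \frac{m_{X^{\ast}}(g)}{m_{X^{\ast}}(g^y)} \cdot (\det a, x(g) x(g^y))_F, \]
which combined with $\nu(y, g_0 g) = \nu(y, g_0) \nu(y, g)$ from Lemma \ref{gg0mod43}(2) gives (2); part (3) is the mirror-image computation, using Lemma \ref{gg0mod43}(3) and the Bruhat-cell analysis of $g g_0$. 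The main (mild) obstacle is the case-by-case verification that $g_0^y \in M$ with $x(g_0^y) \equiv \det a$ across all four cosets of $F^{\times}/F^{\times 2}$, together with careful tracking of the sign conventions between the section $\iota$ and the Rao function $x$; everything else is bookkeeping via the bimultiplicativity of $\gamma(\cdot, \psi^{1/2})$.
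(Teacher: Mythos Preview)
Your proposal is correct and follows essentially the same approach as the paper's proof: both use the bridge formula $\nu_2(y,h)=\nu(y,h)\,m_{X^{\ast}}(h)/m_{X^{\ast}}(h^y)$ from Lemma~\ref{nu2}, reduce to Lemma~\ref{gg0mod43} for the $\nu$-part, and handle the $m_{X^{\ast}}$-ratio via the factorization $m_{X^{\ast}}(g_0g)=m_{X^{\ast}}(g_0)m_{X^{\ast}}(g)(\det a,x(g))_F$ together with the observation that $g_0^y\in M$ with $m_{X^{\ast}}(g_0^y)=m_{X^{\ast}}(g_0)$. Your write-up merely makes explicit the case-check on $g_0^y$ and $x(g_0^y)\equiv\det a$ that the paper leaves implicit in the line ``Since $g_0^y\in M$.''
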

\begin{proof}
1) By (\ref{mx}), $m_{X^{\ast}}(g_0^y)=m_{X^{\ast}}(g_0)$, so $ \nu_2(y, g_0)=\nu(y, g_0)=(\det a,\lambda_y)_F$. \\
2) By (\ref{mx}), $m_{X^{\ast}}(g_0g)=m_{X^{\ast}}(g_0)m_{X^{\ast}}(g)(\det a, x(g))_F$. Since $g_0^y \in M$,
\begin{align*}
m_{X^{\ast}}(g_0^yg^y)&=m_{X^{\ast}}(g_0^y)m_{X^{\ast}}(g^y)(\det a, x(g^y))_F\\
&=m_{X^{\ast}}(g_0)m_{X^{\ast}}(g^y)(\det a, x(g^y))_F.
\end{align*}
Hence:
\begin{align*}
\nu_2(y, g_0g)&=\nu(y,g_0g) \frac{m_{X^{\ast}}(g_0g)}{m_{X^{\ast}}(g_0^yg^{y})}\\
&=\nu(y, g_0) \nu(y, g) \frac{m_{X^{\ast}}(g_0)m_{X^{\ast}}(g)(\det a, x(g))_F}{m_{X^{\ast}}(g_0^y)m_{X^{\ast}}(g^y)(\det a, x(g^y))_F}\\
&=\nu_2(y, g_0) \nu_2(y, g)(\det a, x(g)x(g^y))_F.
\end{align*}
3) The proof is similar as (2).
\end{proof}
\begin{lemma}
\begin{align}\label{comp}
\overline{C}_M([y_1,g_1], [y_2, g_2])=\widetilde{C}_M([y_1,g_1], [y_2, g_2])m_{X^{\ast}}(g_1^{y_2}g_2)^{-1}m_{X^{\ast}}(g_1) m_{X^{\ast}}(g_2).
\end{align}
\end{lemma}
\begin{proof}
By Lemma \ref{nu2}, $\nu_2(y_2, g_1)=\nu(y_2,g_1) \frac{m_{X^{\ast}}(g_1)}{m_{X^{\ast}}(g_1^{y_2})}$. By (\ref{28inter}), $$c(g_1^{y_2}, g_2)=m_{X^{\ast}}(g_1^{y_2}g_2)^{-1} m_{X^{\ast}}(g_1^{y_2}) m_{X^{\ast}}(g_2) c_{PR, {X^{\ast}}}(g_1^{y_2},g_2).$$
Hence:
\begin{align*}
\overline{C}_M([y_1,g_1], [y_2, g_2])&=\nu(y_2,g_1) \frac{m_{X^{\ast}}(g_1)}{m_{X^{\ast}}(g_1^{y_2})}m_{X^{\ast}}(g_1^{y_2}g_2)^{-1} m_{X^{\ast}}(g_1^{y_2}) m_{X^{\ast}}(g_2) c_{PR, {X^{\ast}}}(g_1^{y_2},g_2)\\
&=\widetilde{C}_M([y_1,g_1], [y_2, g_2])m_{X^{\ast}}(g_1^{y_2}g_2)^{-1}m_{X^{\ast}}(g_1) m_{X^{\ast}}(g_2).
\end{align*}
\end{proof}
\begin{lemma}\label{coco1}
For $[y_1, g_1]\in \Delta_D$,  $g_1=y_1^{-1}= \begin{bmatrix}
t& \\
& t^{-1}\end{bmatrix}$,  $[y_2, g_2]\in \widetilde{F^{\times}} \ltimes \Sp(W)$, $g_2=p_1\omega_S p_2$, $p_i=\begin{pmatrix}
a_i& b_i\\
0& d_i
\end{pmatrix}$, $i=1,2$,
$$\overline{C}_M([y_1,g_1], [y_2, g_2])=( t^m, \lambda_{y_2})_F(t^m, \det a_1a_2)_F, \quad \quad \overline{C}_M([y_2,g_2], [y_1, g_1]) =(t^m, \det a_1a_2)_F.$$
\end{lemma}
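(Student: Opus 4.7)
The plan is to reduce everything to Lemma~\ref{cocomod41} via the comparison formula~(\ref{comp}), and then carry out a bookkeeping computation of the $m_{X^{\ast}}$ factors. Since $g_1=\pm I$, it is central in $\Sp(W)$ and lies in the Levi $M$, which makes the Bruhat data of $g_1g_2$ and $g_2g_1$ very easy to extract from that of $g_2$.

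First I would substitute into~(\ref{comp}) with the two inputs, obtaining
\[
\overline{C}_M([y_1,g_1],[y_2,g_2])=\widetilde{C}_M([y_1,g_1],[y_2,g_2])\cdot m_{X^{\ast}}(g_1^{y_2}g_2)^{-1}m_{X^{\ast}}(g_1)m_{X^{\ast}}(g_2),
\]
and similarly with the arguments swapped. Lemma~\ref{cocomod41} immediately supplies $(t^m,\lambda_{y_2})_F$ in the first case and $1$ in the second, so the whole problem reduces to evaluating the $m_{X^{\ast}}$--cocktail.

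Next I would use that $g_1\in\{\pm I\}$ is central, so $g_1^{y_2}=g_1$ and both $g_1g_2$ and $g_2g_1$ admit the Bruhat decompositions $(g_1p_1)\omega_Sp_2$ and $p_1\omega_S(p_2g_1)$ respectively. In either case $j(\cdot)=|S|$, and a direct determinant computation on $X^{\ast}$ gives
\[
x(g_1g_2)\equiv x(g_2g_1)\equiv t^{-m}x(g_2)\equiv t^m x(g_2)\pmod{F^{\times 2}},
\]
using that $t=\pm 1$. Plugging into the defining formula~(\ref{mx}) for $m_{X^{\ast}}$ and applying the multiplicativity $\gamma(ab,\psi^{1/2})=\gamma(a,\psi^{1/2})\gamma(b,\psi^{1/2})(a,b)_F$ of the normalized Weil index, the $\gamma_{\psi^{1/2}}(1)^{|S|}$ and $\gamma(x(g_2),\psi^{1/2})$ terms cancel across the three factors, and $m_{X^{\ast}}(g_1)=\gamma(t^m,\psi^{1/2})^{-1}$ cancels the $\gamma(t^m,\psi^{1/2})$ that comes out of the expansion. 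What survives is exactly $(t^m,x(g_2))_F=(t^m,\det(a_1a_2))_F$, because $x(g_2)=\det(d_1d_2)F^{\times 2}=\det(a_1a_2)^{-1}F^{\times 2}$ and $(-,-)_F$ is bilinear.

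Combining these pieces, the first assertion reads $(t^m,\lambda_{y_2})_F(t^m,\det a_1a_2)_F$ and the second reads $1\cdot(t^m,\det a_1a_2)_F$, as desired. The only delicate step is verifying the identity $x(g_1g_2)\equiv t^m x(g_2)$ and its right-handed analogue from Rao's definition of $x$, together with the bookkeeping of Weil-index factors; once that is in place everything collapses by Hilbert-symbol bilinearity. No other tool beyond (\ref{mx}), (\ref{comp}) and Lemma~\ref{cocomod41} is needed.
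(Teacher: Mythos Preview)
Your proposal is correct and follows essentially the same route as the paper: apply the comparison formula~(\ref{comp}), invoke Lemma~\ref{cocomod41} for the $\widetilde{C}_M$ values, and reduce the $m_{X^{\ast}}$ contribution to $(t^m,x(g_2))_F$ using that $g_1=\pm I$ is central (so $g_1^{y_2}=g_1$ and $g_2^{y_1}g_1=g_1g_2$) together with the multiplicativity $\gamma(ab,\psi^{1/2})=\gamma(a,\psi^{1/2})\gamma(b,\psi^{1/2})(a,b)_F$. The paper compresses the $m_{X^{\ast}}$ step into the single identity $m_{X^{\ast}}(g_1g_2)=m_{X^{\ast}}(g_1)m_{X^{\ast}}(g_2)(t^m,x(g_2))_F$, but this is exactly what your Weil-index bookkeeping establishes.
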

\begin{proof}
By (\ref{comp}),
\begin{align*}
\overline{C}_M([y_1,g_1], [y_2, g_2])& =\widetilde{C}_M([y_1,g_1], [y_2, g_2])m_{X^{\ast}}(g_1^{y_2}g_2)^{-1}m_{X^{\ast}}(g_1) m_{X^{\ast}}(g_2)\\
&=\widetilde{C}_M([y_1,g_1], [y_2, g_2])m_{X^{\ast}}(g_1^{y_2})^{-1}m_{X^{\ast}}(g_2)^{-1}(t^m, x(g_2))_Fm_{X^{\ast}}(g_1) m_{X^{\ast}}(g_2)\\
&= ( t^m, \lambda_{y_2})_F(t^m, x(g_2))_F\\
&=( t^m, \lambda_{y_2})_F(t^m, \det a_1a_2)_F.
\end{align*}
\begin{align*}
\overline{C}_M([y_2,g_2], [y_1, g_1])& =\widetilde{C}_M([y_2,g_2], [y_1, g_1])m_{X^{\ast}}(g_2^{y_1}g_1)^{-1}m_{X^{\ast}}(g_1) m_{X^{\ast}}(g_2)\\
&=\widetilde{C}_M([y_2,g_2], [y_1, g_1])m_{X^{\ast}}(g_1g_2)^{-1}m_{X^{\ast}}(g_1) m_{X^{\ast}}(g_2)\\
&=(t^m, x(g_2))_F=(t^m, \det a_1a_2)_F.
\end{align*}
\end{proof}
\begin{lemma}\label{equalD}
\begin{itemize}
\item[(1)] $\overline{C}_M([y_1,g_1], [y_2, g_2])=1=\overline{C}_M([y_2,g_2], [y_1, g_1])$, for any $[y_1,g_1]\in F^{\times} \times 1$.
\item[(2)] The central group of $\widetilde{F^{\times}}\ltimes\overline{\Sp}(W)$ contains $[F^{\times} \times 1] \times \mu_2$.
\end{itemize}
\end{lemma}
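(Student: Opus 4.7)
The plan is to deduce (1) by direct substitution into formula (\ref{equations2}), exploiting that the constraints $g_1 = 1$ and $y_1 \in F^{\times}$ (a scalar matrix in $\GSp(W)$ by Lemma \ref{mod1}) collapse every factor to $1$. Part (2) will then follow routinely from the product law of the cocycle extension together with the fact that $\mu_2$ is already central by construction.

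For the first identity in (1), I would write
$$\overline{C}_M([y_1, 1], [y_2, g_2]) = \nu_2(y_2, 1)\, c(1^{y_2}, g_2),$$
observe that $1^{y_2} = 1$ reduces the second factor to $c(1, g_2) = 1$ by normalization of $c$, and note that since any automorphism fixes the identity of $\Sp(W)$ one has $\nu(y_2, 1) = 1$. Combined with $m_{X^{\ast}}(1) = 1$ read off from (\ref{mx}), Lemma \ref{nu2} yields $\nu_2(y_2, 1) = 1$. For the second identity, I would use
$$\overline{C}_M([y_2, g_2], [y_1, 1]) = \nu_2(y_1, g_2)\, c(g_2^{y_1}, 1).$$
Since $y_1$ is a central scalar of $\GSp(W)$, conjugation by $y_1$ is trivial, so $g_2^{y_1} = g_2$. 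Hence $c(g_2^{y_1}, 1) = c(g_2, 1) = 1$ by normalization; moreover $m_{X^{\ast}}(g_2^{y_1}) = m_{X^{\ast}}(g_2)$, and $\nu(y_1, g_2) = 1$ because $\nu$ descends to a function on $(F^{\times}/F^{\times 2}) \times \Sp(W)$ and $\dot{\lambda_{y_1}} = \dot{1}$ for scalar $y_1$ (as listed in the bullets preceding Lemma \ref{nu1}). Combined via Lemma \ref{nu2}, this yields $\nu_2(y_1, g_2) = 1$.

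For part (2), $\mu_2$ is central in $\widetilde{F^{\times}} \ltimes \overline{\Sp}(W)$ by construction of the covering, so it will suffice to show that every $[y_1, 1] \in F^{\times} \times 1$ commutes with every $[y_2, g_2, \epsilon_2]$. Applying the cocycle multiplication law (with $\nu$ replaced by $\nu_2$ and $c_{PR, X^{\ast}}$ replaced by $c$), both products $[y_1, 1, 1] \cdot [y_2, g_2, \epsilon_2]$ and $[y_2, g_2, \epsilon_2] \cdot [y_1, 1, 1]$ will collapse to $[y_1 y_2, g_2, \epsilon_2]$ after substituting $1^{y_2} = 1$, $g_2^{y_1} = g_2$, and the vanishing results from (1). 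I do not expect a substantial obstacle: the whole argument is bookkeeping combining the normalization of $c$, the centrality of scalar matrices in $\GSp(W)$, and the triviality of $\nu_2$ at the identity.
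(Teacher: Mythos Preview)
Your proposal is correct. The route differs slightly from the paper's: rather than invoking the relation (\ref{comp}) to reduce $\overline{C}_M$ to the already-computed $\widetilde{C}_M$ (using the proof of Lemma \ref{cent3mdulo4}) and then cancelling the $m_{X^{\ast}}$-factors, you work directly from the defining formula (\ref{equations2}) and kill $\nu_2$ and $c$ separately via Lemma \ref{nu2}, the bullets preceding Lemma \ref{nu1}, and the normalization of $c$. Both arguments rest on the same three facts --- $g_1=1$ trivializes one side, $y_1\in F^{\times}$ is a scalar so conjugation is trivial, and $m_{X^{\ast}}(1)=1$ --- so the difference is organizational: the paper recycles its earlier $\widetilde{C}_M$ computation, while your approach is self-contained and avoids passing through the $8$-fold cocycle.
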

\begin{proof}
1) By (\ref{comp}), 
\begin{align*}
\overline{C}_M([y_1,g_1], [y_2, g_2])&=\widetilde{C}_M([y_1,g_1], [y_2, g_2])m_{X^{\ast}}(g_1^{y_2}g_2)^{-1}m_{X^{\ast}}(g_1) m_{X^{\ast}}(g_2)\\
&= \widetilde{C}_M([y_1,g_1], [y_2, g_2])m_{X^{\ast}}(g_2)^{-1} m_{X^{\ast}}(g_2)   \\
&=\widetilde{C}_M([y_1,g_1], [y_2, g_2])=1;
\end{align*}
\begin{align*}
\overline{C}_M([y_2,g_2], [y_1, g_1])&=\widetilde{C}_M([y_2,g_2], [y_1, g_1])m_{X^{\ast}}(g_2^{y_1}g_1)^{-1}m_{X^{\ast}}(g_2) m_{X^{\ast}}(g_1)\\
&= \widetilde{C}_M([y_2,g_2], [y_1, g_1])m_{X^{\ast}}(g_2^{y_1})^{-1}m_{X^{\ast}}(g_2)   \\
&=\widetilde{C}_M([y_2,g_2], [y_1, g_1])=1.
\end{align*}
2) It is a consequence of (1).
\end{proof}
Let us define:
$$\overline{\PGSp^{\pm}}(W)=[\widetilde{F^{\times}}\ltimes\overline{\Sp}(W)]/[F^{\times} \times 1_{\Sp(W)}].$$
Then $\overline{\PGSp^{\pm}}(W)$ is a central extension of $\PGSp^{\pm}(W)$ by $\mu_2$ associated with $\overline{C}_M(-,-)$. Moreover, there exists an exact sequence:
$$1 \to \overline{\Sp}(W) \to \overline{\PGSp^{\pm}}(W) \stackrel{\dot{\lambda}}{\longrightarrow} F^{\times}/F^{\times 2} \to 1.$$
\section{Extended  Weil representation: Case $|k_F|\equiv 3(\bmod4)$}\label{case3mod44} 
\subsection{A twisted action}\label{atwstamod134}
Recall: 
\begin{equation}\label{fff1}
1\longrightarrow F^{\times } \longrightarrow \widetilde{F^{\times}} \longrightarrow F^{\times}/F^{\times 2} \longrightarrow 1.
\end{equation}
For an element $t\in \widetilde{F^{\times}}$, let $\dot{t}$  denote its reduction in $F^{\times}/F^{\times 2}$. Note that  $F^{\times}/F^{\times 2}$ can be represented by $\{ \dot{1}, \dot{(-1)}, \dot{\varpiup}, \dot{(-\varpiup)}\}$.  Let $\kappa$ be the canonical section map from $F^{\times}/F^{\times 2}$ to $\widetilde{F^{\times}} $ such that 
$$\kappa( \dot{1})= \begin{pmatrix}
I&0\\
0& I\end{pmatrix}, \kappa(\dot{(-1)})= \begin{pmatrix}
I&0\\
0& -I\end{pmatrix},  \kappa( \dot{\varpiup})= \begin{pmatrix}
0& -I\\
\varpiup I & 0\end{pmatrix},\kappa( \dot{(-\varpiup)})=\begin{pmatrix}
0& -I\\
-\varpiup I& 0\end{pmatrix}.$$
$\widetilde{F^{\times}}$ can be viewed as a central extension of $F^{\times}/F^{\times 2}$ by $F^{\times }$. Let $c'(-,-)$ denote the $2$-cocycle associated to $\kappa$. More precisely, for two elements $t_1, t_2\in \widetilde{F^{\times}}$, let us write $t_1=a_1 \kappa(\dot{t}_1)$, $t_2=a_2 \kappa(\dot{t}_2)$. Then:
\begin{align}\label{tt12}
t_1t_2=a_1a_2\kappa(\dot{t}_1) \kappa(\dot{t}_2)=a_1a_2 c'(\dot{t}_1, \dot{t}_2)\kappa(\dot{t}_1\dot{t}_2).
\end{align}
 Then:
 $$\left\{ \begin{array}{ll}
 c'(\dot{t}, \dot{1})=c'(\dot{1}, \dot{t})=c'(\dot{(-1)}, \dot{t})=1,  & \textrm{ for  any }\dot{t}\in F^{\times}/F^{\times 2}; \\
 c'(\dot{\varpiup}, \dot{(-1)})= c'(\dot{(-\varpiup)}, \dot{(-1)})=-1; & \\
  c'(\dot{\varpiup}, \dot{\varpiup})=-\varpiup=c'(\dot{(-\varpiup)}, \dot{\varpiup}); & \\
  c'(\dot{\varpiup}, \dot{(-\varpiup)})= \varpiup = c'(\dot{(-\varpiup)}, \dot{(-\varpiup)}). &
\end{array}\right.$$
Via the projections: $\widetilde{F^{\times}} \longrightarrow F^{\times}/F^{\times 2}$, $\widetilde{F^{\times}}\ltimes \Sp(W)  \longrightarrow F^{\times}/F^{\times 2}$, $\PGSp^{\pm}(W) \longrightarrow F^{\times}/F^{\times 2}$, we can  view $c'(-,-)$ as a cocycle on $\widetilde{F^{\times}}$, $\widetilde{F^{\times}}\ltimes \Sp(W)$, $\PGSp^{\pm}(W)$.  Let $\wideparen{F^{\times}/F^{\times 2}}$, $\wideparen{\widetilde{F^{\times}}}$,   $\wideparen{\widetilde{F^{\times}}}\ltimes \Sp(W)$, $\wideparen{\PGSp^{\pm}}(W)$ denote the corresponding covering groups associated to this cocycle. Then there exists the following commutative diagram:
\begin{equation}\label{eq0}
\begin{CD}
@. 1@. 1 @.1 @. \\
@.@VVV @VVV @VVV @.\\
1 @>>> 1@>>> F^{\times}@>>> F^{\times }@>>> 1\\
@.@VVV @VVV @VVV @.\\
1@>>>\Sp(W) @>>>\wideparen{\widetilde{F^{\times}}}\ltimes \Sp(W)@>\wideparen{\lambda}>>\wideparen{\widetilde{F^{\times}}} @>>> 1\\
@. @\vert @VV\wideparen{p}V @VVV @.\\
1 @>>> \Sp(W) @>>> \wideparen{\PGSp^{\pm}}(W)@>\wideparen{\dot{\lambda}}>> \wideparen{F^{\times}/F^{\times 2}} @>>> 1\\
@.@VVV @VVV @VVV @.\\
@. 1@. 1 @.1 @.
\end{CD}
\end{equation}
Let us consider a twisted right action of $F^{\times} \times \Sp(W)$ on $\Ha(W)$ in the following way:
\begin{equation}\label{alphaac}
\alpha:\Ha(W) \times [F^{\times} \times \Sp(W)]\longrightarrow \Ha(W); ((v,t), [k, g]) \longmapsto (v g, t).
\end{equation}
The restriction of $\alpha$ on $\Sp(W)$ is the usual action. Let us extend this action to $\wideparen{\widetilde{F^{\times}}}\ltimes \Sp(W)$. For any element $\wideparen{h} \in \wideparen{\widetilde{F^{\times}}}\ltimes \Sp(W)$, let us write 
$$ \wideparen{h} =(h, \epsilon),  h=[t_h, g_h],  t_h=a_h \kappa(\dot{t}_h)$$
for some  $h\in \widetilde{F^{\times}}\ltimes \Sp(W), \epsilon \in F^{\times},   t_h\in \wideparen{\widetilde{F^{\times}}}, g_h\in \Sp(W), a_h\in F^{\times}, \dot{t}_h \in F^{\times }/F^{\times 2}$.  Recall that $\lambda: \wideparen{\widetilde{F^{\times}}}\ltimes \Sp(W) \longrightarrow \wideparen{\widetilde{F^{\times}}} \longrightarrow F^{\times}$.
\begin{lemma}
\begin{itemize}
\item[(1)] $a_{h}^{-1}a_{h'}^{-1}=c'(h, h') a_{hh'}^{-1}$.
\item[(2)] $\lambda_{a_{h}^{-1}h}\lambda_{a_{h'}^{-1}h'}=c'(h,h')^2\lambda_{a_{hh'}^{-1}hh'}$.
\end{itemize}
\end{lemma}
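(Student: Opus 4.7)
The plan is to derive both identities directly from the cocycle relation (\ref{tt12}) applied to the first factor of the semidirect product, together with the fact that a scalar $a\in F^{\times}\subseteq \widetilde{F^{\times}}\subseteq \GSp(W)$ has similitude factor $\lambda_a = a^2$.

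First I would unwind the semidirect product structure. Multiplication in $\widetilde{F^{\times}}\ltimes \Sp(W)$ is $(t_1, g_1)(t_2, g_2)=(t_1t_2, g_1^{t_2}g_2)$, so the first coordinate of $hh'$ is $t_{hh'}=t_h t_{h'}$, and in particular $\dot{t}_{hh'}=\dot{t}_h\dot{t}_{h'}$ in $F^{\times}/F^{\times 2}$. Substituting $t_h = a_h \kappa(\dot{t}_h)$ and $t_{h'}=a_{h'}\kappa(\dot{t}_{h'})$ into (\ref{tt12}) gives
$$t_h t_{h'}=a_h a_{h'} c'(\dot{t}_h, \dot{t}_{h'})\,\kappa(\dot{t}_h\dot{t}_{h'}).$$
Comparing with the distinguished decomposition $t_{hh'}=a_{hh'}\kappa(\dot{t}_{hh'})$, uniqueness of the representative in each $F^{\times}$-coset of $\widetilde{F^{\times}}$ forces $a_{hh'}=a_h a_{h'} c'(\dot{t}_h,\dot{t}_{h'})$. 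Taking reciprocals yields (1).

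For (2) I would apply the similitude map $\lambda:\GSp(W)\to F^{\times}$, which is a group homomorphism and whose restriction along $F^{\times}\hookrightarrow \widetilde{F^{\times}}$ (the embedding sending $a$ to the scalar matrix $aI=\iota(a^2)$) is $a\mapsto a^2$. Note that $\lambda_h=\lambda_{t_h}$ since $\lambda_{g_h}=1$, and $a_h^{-1}h=[\kappa(\dot{t}_h),g_h]$, so $\lambda_{a_h^{-1}h}=\lambda_{\kappa(\dot{t}_h)}$ and $\lambda_{t_h}=a_h^{2}\lambda_{a_h^{-1}h}$ (and similarly for $h'$ and $hh'$). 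Applying $\lambda$ to $t_{hh'}=a_h a_{h'} c'(\dot{t}_h,\dot{t}_{h'})\kappa(\dot{t}_{hh'})$ together with multiplicativity of $\lambda$ on the left-hand side then gives
$$a_h^{2}\lambda_{a_h^{-1}h}\cdot a_{h'}^{2}\lambda_{a_{h'}^{-1}h'}=a_h^{2}a_{h'}^{2}\,c'(\dot{t}_h,\dot{t}_{h'})^{2}\,\lambda_{a_{hh'}^{-1}hh'},$$
and cancelling $a_h^{2}a_{h'}^{2}$ proves (2).

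There is essentially no obstacle here: part (1) is just the first-coordinate cocycle identity, and part (2) is the image of (1) under $\lambda$. The only point requiring care is the factor-of-two in the exponent of $c'(\dot{t}_h,\dot{t}_{h'})$, which arises precisely because the scalar embedding $F^{\times}\hookrightarrow\widetilde{F^{\times}}\subset \GSp(W)$ has similitude factor equal to the square.
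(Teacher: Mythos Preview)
Your proof is correct and follows essentially the same approach as the paper: both arguments reduce (1) to the cocycle identity for the first coordinate $t_{hh'}=t_h t_{h'}$ in $\widetilde{F^{\times}}$, and the paper records (2) simply as ``a consequence of (1)'', which is precisely what your computation with $\lambda_{a}=a^{2}$ makes explicit.
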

\begin{proof}
1) As  $t_{h}t_{h'}=t_{hh'}$,
$$a_h\kappa(\dot{t_h})a_{h'}\kappa(\dot{t_{h'}})=a_ha_{h'}c'(\dot{t_h}, \dot{t_{h'}})\kappa(\dot{t_{hh'}}) =a_{hh'} \kappa(\dot{t_{hh'}}),$$
$$c'(\dot{t_h}, \dot{t_{h'}})=c'(h, h').$$
2) It is a consequence of (1).
\end{proof}
Via the group homomorphism: $\widetilde{F^{\times}}\ltimes \Sp(W) \longrightarrow \GSp(W)$, we can get an action of $\widetilde{F^{\times}}\ltimes \Sp(W)$ on $W$. 
Now  let us extend the twisted  action $\alpha$ to $\wideparen{\widetilde{F^{\times}}}\ltimes \Sp(W)$ as follows:
\begin{equation}\label{alphaac}
\alpha: \Ha(W) \times [\wideparen{\widetilde{F^{\times}}} \ltimes \Sp(W)] \longrightarrow \Ha(W); ((v,t), (h,\epsilon)) \longmapsto (a_{h}^{-1} v h\epsilon, \lambda_{a_{h}^{-1}h} \lambda_{\epsilon}  t).
\end{equation}
By the above lemma, it is well-defined. Moreover, $[F^{\times} \times 1]$  acts trivially on $\Ha(W)$,  so it indeed defines an action of $\wideparen{\PGSp^{\pm}}(W)$ on $\Ha(W)$ by the diagram (\ref{eq0}).

 Via the projections:  $\widetilde{F^{\times}} \ltimes \Mp(W)\longrightarrow \widetilde{F^{\times}}$,  $\widetilde{F^{\times}} \ltimes \overline{\Sp}(W)\longrightarrow \widetilde{F^{\times}}$, $\PGMp^{\pm}(W) \longrightarrow F^{\times}/F^{\times 2}$, $\overline{\PGSp^{\pm}}(W) \longrightarrow F^{\times}/F^{\times 2}$,  we can also view $c'(-,-)$ as a cocycle on $\widetilde{F^{\times}} \ltimes \Mp(W)$, $\widetilde{F^{\times}} \ltimes \overline{\Sp}(W)$,  $\PGMp^{\pm}(W)$, $\overline{\PGSp^{\pm}}(W) $.  Let  $\wideparen{\widetilde{F^{\times}}}\ltimes \Mp(W)$,  $\wideparen{\widetilde{F^{\times}}}\ltimes \overline{\Sp}(W)$, $\wideparen{\PGMp^{\pm}}(W)$,  $\wideparen{\overline{\PGSp^{\pm}}}(W) $  denote the corresponding covering groups associated to $c'(-,-)$.  Then there exist the following commutative diagrams:
\begin{equation}\label{eq5}
\begin{CD}
1@>>>\overline{\Sp}(W) @>>>\wideparen{\overline{\PGSp^{\pm}}}(W)@> \wideparen{\dot{\lambda}}>> \wideparen{F^{\times}/F^{\times 2}} @>>> 1\\
@. @VVV @VV\wideparen{p}V @\vert @.\\
1 @>>> \Sp(W) @>>> \wideparen{\PGSp^{\pm}}(W)@>\wideparen{\dot{\lambda}}>> \wideparen{F^{\times}/F^{\times 2}} @>>> 1\\
\end{CD}
\end{equation}
\begin{equation}\label{eq6}
\begin{CD}
1@>>>\Mp(W) @>>>\wideparen{\PGMp^{\pm}}(W)@> \wideparen{\dot{\lambda}}>> \wideparen{F^{\times}/F^{\times 2}} @>>> 1\\
@. @VVV @VV\wideparen{p}V @\vert @.\\
1 @>>> \Sp(W) @>>> \wideparen{\PGSp^{\pm}}(W)@>\wideparen{\dot{\lambda}}>> \wideparen{F^{\times}/F^{\times 2}} @>>> 1\\
\end{CD}
\end{equation}
\subsection{Extended  Weil representations}
Recall that $(\pi_{\psi}, V_{\psi})$ is a Weil representation of $\Mp(W)$ associated to $\psi$. Let us define the twisted induced Weil representation of $\PGMp^{\pm}(W)$ as follows:
$$\Pi_{\psi}=\cInd_{\Mp(W)}^{\PGMp^{\pm}(W)} \pi_{\psi},\quad \mathcal{V}_{\psi}=\cInd_{\Mp(W)}^{\PGMp^{\pm}(W)} V_{\psi}.$$
By considering the restriction of $\Pi_{\psi}$ on $\Mp(W)$, we can obtain four Weil representations, corresponding  to $\pi_{\psi^a}$, as $a=1, -1, \varpiup,-\varpiup$. So it is independent  of  the choice of $\psi$. This representation can also be viewed as a representation of  $\overline{\PGSp^{\pm}}(W)$. By Clifford-Mackey theory, it is also compatible with the induced Weil representation directly from $\overline{\Sp}(W)$ to $\overline{\PGSp^{\pm}}(W)$.  The next purpose is to see whether such  representations  can arise    from the Heisenberg  representation of $\Ha(W)$.

Recall the definition of the $\alpha$ action  in (\ref{alphaac}).  Through the above projection $\wideparen{p}$, we can extend this  action to  $\wideparen{\PGMp^{\pm}}(W)$. Then let us define a representation as follows:
$$\wideparen{\Pi}_{\psi}=\cInd_{\Mp(W)\ltimes \Ha(W)}^{\wideparen{\PGMp^{\pm}}(W) \ltimes_{\alpha} \Ha(W)}\pi_{\psi},\quad \wideparen{\mathcal{V}}_{\psi}=\cInd_{\Mp(W)\ltimes \Ha(W)}^{\wideparen{\PGMp^{\pm}}(W) \ltimes_{\alpha} \Ha(W)} V_{\psi}.$$
Notice that $\wideparen{\PGMp^{\pm}}(W)$ is a central extension of $\PGMp^{\pm}(W)$ by $F^{\times}$. So there   exists a  question on how to reasonably eliminate the effects of the central group. Similarly, we can define the corresponding representation of $\wideparen{\overline{\PGSp^{\pm}}}(W)\ltimes_{\alpha} \Ha(W)$.
\section{The group $\PGMp^{\pm}(W)$: Case $|k_F|\equiv 1(\bmod4)$}\label{case1mod411}
In the following two sections,  we  assume $|k_F|\equiv 1(\bmod4)$.
\subsection{$\widetilde{F^{\times}}$}\label{wideparen}  Follow  the  notations in Section \ref{Ftimes}. Recall    $F^{\times} \simeq    \mathfrak{f} \times U_1\times  \langle \varpiup\rangle$, and $ \mathfrak{f} =\langle \zeta_1\rangle \times \langle \zeta_2\rangle $. Recall the exact sequence (\ref{Ftwo}):
 \begin{align}
1 \longrightarrow  \{\pm 1\} \longrightarrow  \widetilde{\mathfrak{f}^2}\longrightarrow \mathfrak{f}^{2} \longrightarrow 1,
\end{align}
Recall that $c_{\sqrt{\,}}(-,-)$ is a $2$-cocycle associated to this exact sequence. Moreover, there exists an isomorphism: $\sqrt{\,}: \widetilde{\mathfrak{f}^2} \longrightarrow \mathfrak{f}; [a^2, \epsilon] \to \sqrt{a^2} \epsilon$.  Let us extend this $2$-cocycle to $F^{\times 2}\simeq \mathfrak{f}^2 \times U_1\times  \langle \varpiup^2 \rangle $, and obtain a group $\widetilde{F^{\times 2}}= \widetilde{\mathfrak{f}^2} \times U_1\times  \langle \varpiup^2 \rangle $. Then there exists an isomorphism:
$$\sqrt{\,}: \widetilde{F^{\times 2}} \to F^{\times}; ([a^{2}, \epsilon], u, \varpiup^{2k}) \longmapsto  (\sqrt{a^2} \epsilon, \sqrt{u}, (-\varpiup)^k). $$
Recall the exact sequence:
\begin{equation}\label{fff}
1\longrightarrow F^{\times 2} \longrightarrow F^{\times} \longrightarrow F^{\times}/F^{\times 2} \longrightarrow 1.
\end{equation}
It is known that  $ F^{\times}/F^{\times 2}=\{ \dot{1},  \dot{\zeta_1}, \dot{\varpiup}, \dot{(\zeta_1\varpiup)} \}$. Let $\kappa$ be the canonical section map from $F^{\times}/F^{\times 2}$ to $F^{\times}$ such that $\kappa( \dot{1})=1$, $\kappa(\dot{\zeta_1})=\zeta_1$, $\kappa(\dot{\varpiup})=\varpiup$, $\kappa(\dot{(\zeta_1\varpiup)})=\zeta_1\varpiup$. Let $c'(-,-)$ denote the $2$-cocycle associated to $\kappa$. More precisely, for two elements $t_1, t_2\in F^{\times}$, let us write $t_1=a_1^2 \kappa(\dot{t}_1)$, $t_2=a_2^2 \kappa(\dot{t}_2)$. Then:
  \begin{align}\label{tttt}
 t_1t_2=(a_1a_2)^2 \kappa(\dot{t}_1) \kappa(\dot{t}_2)=(a_1a_2)^2 c'(\dot{t}_1, \dot{t}_2)\kappa(\dot{t}_1\dot{t}_2).
 \end{align}
 Hence $c'(\dot{t}_1, \dot{t}_2)=c'(\dot{t}_2, \dot{t}_1)$, and $c'(\dot{1}, \dot{t}_2)=1$,  $c'(\dot{\zeta_1}, \dot{\varpiup})=1$, $c'(\dot{\zeta_1}, \dot{\zeta_1})=\zeta_1^2=c'(\dot{\zeta_1}, \dot{(\zeta_1\varpiup}))$, $c'(\dot{\varpiup}, \dot{\varpiup})=\varpiup^2=c'(\dot{\varpiup}, \dot{(\zeta_1\varpiup)})$, $c'(\dot{(\zeta_1\varpiup)}, \dot{(\zeta_1\varpiup)})=(\zeta_1\varpiup)^2$.
   Recall that there exists a canonical map: $$\varrho :F^{\times 2} \longrightarrow \widetilde{F^{\times 2}}; a \longmapsto [a,1].$$
Through $\varrho $, we view $c'(-,-)$ as a map from $F^{\times}/F^{\times 2} \times F^{\times}/F^{\times 2}$ to $\widetilde{F^{\times 2}}$. Let us write $c''(-,-)=\varrho (c'(-,-))$.
\begin{lemma}
$c''(-,-)$ defines a $2$-cocycle on $F^{\times}/F^{\times 2}$ with values in $\widetilde{F^{\times 2}}$.
\end{lemma}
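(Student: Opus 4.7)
The plan is to verify the $2$-cocycle identity
$$c''(\dot{t}_1, \dot{t}_2)\, c''(\dot{t}_1\dot{t}_2, \dot{t}_3) = c''(\dot{t}_2, \dot{t}_3)\, c''(\dot{t}_1, \dot{t}_2\dot{t}_3)$$
directly in $\widetilde{F^{\times 2}}$ for all $\dot{t}_1, \dot{t}_2, \dot{t}_3 \in F^{\times}/F^{\times 2}$. Two ingredients are immediately available. First, $c'$ itself satisfies the analogous identity with values in $F^{\times 2}$, since it is the cocycle attached to the section $\kappa$ of the exact sequence (\ref{fff}), and the identity is forced by the associativity of $\kappa(\dot{t}_1)\kappa(\dot{t}_2)\kappa(\dot{t}_3)$ in $F^\times$ via (\ref{tttt}). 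Second, the multiplication in $\widetilde{F^{\times 2}} = \widetilde{\mathfrak{f}^2} \times U_1 \times \langle \varpiup^2\rangle$ is componentwise, untwisted in the last two factors and twisted in the $\widetilde{\mathfrak{f}^2}$-factor by the rule $[a, 1][b, 1] = [ab, c_{\sqrt{\,}}(a, b)]$.

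Consequently $\varrho(a)\varrho(b) = \varrho(ab)\cdot \jmath(c_{\sqrt{\,}}(a_f, b_f))$, where a subscript $f$ denotes the projection $F^{\times 2} \to \mathfrak{f}^2$ and $\jmath: \{\pm 1\} \hookrightarrow \widetilde{F^{\times 2}}$ is the canonical embedding into the central $\{\pm 1\}$ inside $\widetilde{\mathfrak{f}^2}$. Combining this with the $F^{\times 2}$-valued cocycle identity for $c'$, the desired identity in $\widetilde{F^{\times 2}}$ reduces to the $\pm 1$-valued identity
$$c_{\sqrt{\,}}\bigl(c'(\dot{t}_1,\dot{t}_2)_f,\; c'(\dot{t}_1\dot{t}_2,\dot{t}_3)_f\bigr) = c_{\sqrt{\,}}\bigl(c'(\dot{t}_2,\dot{t}_3)_f,\; c'(\dot{t}_1,\dot{t}_2\dot{t}_3)_f\bigr).$$

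The final step is this $\pm 1$-valued check, and here lies the only substantive obstacle. Writing $\dot{t}_i = \dot{\zeta_1}^{\epsilon_i}\dot{\varpiup}^{\delta_i}$ with $\epsilon_i, \delta_i \in \{0,1\}$, the tabulated values of $c'$ yield $c'(\dot{t}_i,\dot{t}_j)_f = \zeta_1^{2\epsilon_i\epsilon_j}$, so the $\zeta_1^2$-exponents feeding into $c_{\sqrt{\,}}$ always lie in $\{0,1\}$. The formula in Section~\ref{Ftimes}(2) gives $c_{\sqrt{\,}}(\zeta_1^{2i_1}, \zeta_1^{2i_2}) = -1$ only when $i_1+i_2 \geq 2^{k-1}$, so for $k \geq 3$ both sides are automatically $+1$. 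In the remaining borderline case $k=2$, a value of $-1$ would force both exponents to equal $1$; but on the left-hand side this would demand $\epsilon_1\epsilon_2 = 1$ (hence $\epsilon_1 = \epsilon_2 = 1$, making $\epsilon_1+\epsilon_2 \equiv 0 \pmod 2$) together with $(\epsilon_1+\epsilon_2)\epsilon_3 \equiv 1 \pmod 2$, which is impossible; the right-hand side yields a symmetric impossibility. Both sides are therefore always $+1$, and the cocycle identity holds. The key to the argument is that the nontriviality of the $\mathfrak{f}^2$-component in one slot of $c_{\sqrt{\,}}$ forces the companion slot to vanish by a parity argument, so the interaction between $c_{\sqrt{\,}}$ and $c'$ collapses cleanly.
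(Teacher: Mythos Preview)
Your proof is correct and follows essentially the same route as the paper: both reduce the $\widetilde{F^{\times 2}}$-valued cocycle identity to the $\{\pm 1\}$-valued equality
\[
c_{\sqrt{\,}}\bigl(c'(\dot a,\dot b),\,c'(\dot{ab},\dot c)\bigr)\;=\;c_{\sqrt{\,}}\bigl(c'(\dot a,\dot{bc}),\,c'(\dot b,\dot c)\bigr)
\]
using that $c'$ is already a cocycle in $F^{\times 2}$ and that the twist in $\widetilde{F^{\times 2}}$ lives only in the $\widetilde{\mathfrak{f}^2}$-factor. The sole difference is in the final verification: the paper performs an exhaustive case analysis on the four possible values $\{1,\zeta_1^2,\varpiup^2,(\zeta_1\varpiup)^2\}$, with a further bifurcation according to whether $\zeta_1^4=1$, whereas your parametrization $\dot t_i=\dot\zeta_1^{\epsilon_i}\dot\varpiup^{\delta_i}$ together with the observation $c'(\dot t_i,\dot t_j)_f=\zeta_1^{2\epsilon_i\epsilon_j}$ collapses everything to a single parity contradiction---both sides are forced to be $+1$. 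Your endgame is tidier and makes the mechanism (the $\epsilon$-parity kills the second slot whenever the first is nonzero) more visible; the paper's case list has the minor advantage of not relying on the explicit shape of $c_{\sqrt{\,}}$ beyond its symmetry.
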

\begin{proof}
1) $c''(\dot{1}, \dot{t})=[1,1]=c''(\dot{t}, \dot{1})$.\\
2) $$c''(\dot{a}, \dot{b})c''(\dot{ab}, \dot{c})=[c'(\dot{a}, \dot{b}),1][c'(\dot{ab}, \dot{c}),1]=[c'(\dot{a}, \dot{b})c'(\dot{ab}, \dot{c}),c_{\sqrt{\,}}(c'(\dot{a}, \dot{b}),c'(\dot{ab}, \dot{c}))];$$
 $$c''(\dot{a}, \dot{bc})c''(\dot{b}, \dot{c})=[c'(\dot{a}, \dot{bc}),1][c'(\dot{b}, \dot{c}),1]=[c'(\dot{a}, \dot{bc})c'(\dot{b}, \dot{c}),c_{\sqrt{\,}}(c'(\dot{a}, \dot{bc}),c'(\dot{b}, \dot{c}))].$$
 Note that $c'(\dot{a}, \dot{b})c'(\dot{ab}, \dot{c})=c'(\dot{a}, \dot{bc})c'(\dot{b}, \dot{c})$. So, it suffices to show that
\begin{equation}\label{fffffff}
c_{\sqrt{\,}}(c'(\dot{a}, \dot{b}),c'(\dot{ab}, \dot{c}))=c_{\sqrt{\,}}(c'(\dot{a}, \dot{bc}),c'(\dot{b}, \dot{c})),
\end{equation}
 for  $\dot{a}, \dot{b}, \dot{c} \in F^{\times}/F^{\times 2}$. Let us write  $\mathfrak{a}=c'(\dot{a}, \dot{b})$,  $\mathfrak{b}=c'(\dot{ab}, \dot{c})$,  $\mathfrak{c}=c'(\dot{a}, \dot{bc})$, $\mathfrak{d}=c'(\dot{b}, \dot{c})$. Then:
 $$\mathfrak{a}\mathfrak{b}=\mathfrak{c}\mathfrak{d},  \quad \quad \mathfrak{a}, \mathfrak{b},\mathfrak{c}, \mathfrak{d}\in \{ 1, \zeta_1^2, \varpiup^2, (\zeta_1\varpiup)^2\}.$$
 i) If $\zeta_1^4\neq 1$  and  one of $\mathfrak{a}, \mathfrak{b},\mathfrak{c}, \mathfrak{d}$ equals to $1$, by the symmetry of the cocycle, we assume $\mathfrak{a}=1$. Then: $\mathfrak{b}=\mathfrak{c}\mathfrak{d}$. So $\left\{\begin{array}{l} \mathfrak{b}=\mathfrak{c}\\ \mathfrak{d}=1\end{array}\right.$, $\left\{ \begin{array}{l}\mathfrak{b}=\mathfrak{d}\\ \mathfrak{c}=1 \end{array}\right.$,  $\left\{ \begin{array}{l}\mathfrak{b}=(\zeta_1\varpiup)^2 \\ \mathfrak{c}\neq \mathfrak{d}\in \{\zeta_1^2, \varpiup^2\} \end{array}\right.$. In each case, the equality (\ref{fffffff}) holds true. \\
 ii) If  $\zeta_1^4\neq 1$, and none of $\mathfrak{a}, \mathfrak{b},\mathfrak{c}, \mathfrak{d}$ equals to $1$, then $\mathfrak{a}\neq  \mathfrak{b} $ iff $\mathfrak{c}\neq \mathfrak{d}$. If $\mathfrak{a}= \mathfrak{b}$, then $\mathfrak{c}=  \mathfrak{d}=\mathfrak{a}$, the equality (\ref{fffffff}) holds. If $\mathfrak{a}\neq  \mathfrak{b}$, then $\{\mathfrak{a}, \mathfrak{b}\}=\{\mathfrak{c}, \mathfrak{d}\}$. So the equality (\ref{fffffff}) also holds.\\
 iii) If $\zeta_1^4=1$,  by symmetry,   we only need to add the following possible  cases to (i)(ii):
 $$\left\{ \begin{array}{l} \mathfrak{a}=1=\mathfrak{b} \\ \mathfrak{c}= \mathfrak{d}=\zeta_1^2 \end{array}\right., \quad \left\{ \begin{array}{l} \mathfrak{a}\neq \mathfrak{b}\in \{ 1, \varpiup^2\}\\ \mathfrak{c}\neq \mathfrak{d}\in \{ \varpiup^2 \zeta_1^2 ,\zeta_1^2\} \end{array}\right., \left\{ \begin{array}{l} \mathfrak{a}= \mathfrak{b}=\varpiup^2\\ \mathfrak{c}= \mathfrak{d}=\varpiup^2 \zeta_1^2  \end{array}\right..$$
 (1) If $ c'(\dot{a}, \dot{bc})=c'(\dot{b}, \dot{c})=\zeta_1^2$, then $\dot{b}=\dot{c}=\dot{\zeta_1}$, or $\dot{b}\neq \dot{c} \in \{ \dot{\zeta_1}, \dot{(\zeta_1\varpiup)}\}$; in both cases, $c'(\dot{a}, \dot{bc})$ can't be $\zeta_1^2$.\\
 (2) If $ c'(\dot{a}, \dot{bc})= \varpiup^2 \zeta_1^2$ and $c'(\dot{b}, \dot{c})=\zeta_1^2$, then  $\dot{b}=\dot{c}=\dot{\zeta_1}$, or $\dot{b}\neq \dot{c} \in \{ \dot{\zeta_1}, \dot{(\zeta_1\varpiup)}\}$. In both cases, $ \dot{bc}= \dot{1}$ or $\dot{\varpiup}$, then $c'(\dot{a}, \dot{bc}) \neq \varpiup^2 \zeta_1^2$.\\
 (3) If $ c'(\dot{a}, \dot{bc})= \zeta_1^2$ and $c'(\dot{b}, \dot{c})= \varpiup^2\zeta_1^2$,  then $\dot{b}=\dot{c}=\dot{(\zeta_1\varpiup)}$ and $\dot{bc}=\dot{1}$, it is also impossible.\\
 (4) If $ c'(\dot{a}, \dot{bc})=c'(\dot{b}, \dot{c})=\varpiup^2 \zeta_1^2$, then $\dot{b}=\dot{c}=\dot{(\zeta_1\varpiup)}$ and $\dot{bc}=\dot{1}$, it is also impossible.
\end{proof}
 Associated to the  $2$-cocycle $c''$, there exists an exact sequence:
\begin{equation}\label{fffww}
1\longrightarrow \widetilde{F^{\times 2}} \longrightarrow \widetilde{F^{\times}} \longrightarrow F^{\times}/F^{\times 2} \longrightarrow 1.
\end{equation}
\begin{lemma}
There exists a group homomorphism:
$$ \widetilde{F^{\times }} \longrightarrow F^{\times}; ([g,\epsilon], \dot{t}) \longmapsto [g, \dot{t}]$$ such that
 the following  commutative diagram holds.
\[
\begin{CD}\label{eq78}
 @. 1@. 1 @.1 @. \\
@.@VVV @VVV  @VVV \\
1 @>>> \{\pm 1\}@>>>   \{\pm 1\} @>>>1 @>>> 1\\
@.@VVV @VVV  @VVV \\
1 @>>> \widetilde{F^{\times 2}}@>>>  \widetilde{F^{\times}} @>>>F^{\times}/F^{\times 2} @>>> 1\\
@.@VVV @VVV @VV{=}V \\
1 @>>> F^{\times 2}@>>> F^{\times} @>>>F^{\times}/F^{\times 2} @>>> 1\\
@.@VVV @VVV  @VVV \\
 @. 1@. 1 @.1 @. \\
\end{CD}
\]
\end{lemma}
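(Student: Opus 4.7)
The plan is to take the most natural candidate for the map, namely
$$\Phi\colon\widetilde{F^{\times}}\longrightarrow F^{\times},\qquad ([g,\epsilon],\dot t)\longmapsto g\cdot\kappa(\dot t),$$
and verify both that it is a group homomorphism and that the three squares in the diagram commute. The rows and the outer columns (columns~1 and~3) are exact by construction of $\widetilde{\mathfrak{f}^2}$, $\widetilde{F^{\times 2}}$, and $\widetilde{F^{\times}}$, so the only nontrivial work about the middle column is identifying $\ker\Phi$ and observing surjectivity of $\Phi$ (immediate: any $h\in F^{\times}$ decomposes as $(h\kappa(\dot h)^{-1})\cdot\kappa(\dot h)$, with $h\kappa(\dot h)^{-1}\in F^{\times 2}$).

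To check $\Phi$ is a homomorphism, I would unpack the product in $\widetilde{F^{\times}}$. With $\widetilde a_i=[g_i,\epsilon_i]\in\widetilde{F^{\times 2}}$,
$$(\widetilde a_1,\dot t_1)\cdot(\widetilde a_2,\dot t_2)=\bigl(\widetilde a_1\widetilde a_2\cdot c''(\dot t_1,\dot t_2),\;\dot t_1\dot t_2\bigr).$$
Combining $\widetilde a_1\widetilde a_2=[g_1g_2,\;\epsilon_1\epsilon_2\,c_{\sqrt{\,}}(g_1,g_2)]$ with $c''(\dot t_1,\dot t_2)=[c'(\dot t_1,\dot t_2),1]$, the first coordinate of the product, projected to $F^{\times 2}$, is $g_1g_2c'(\dot t_1,\dot t_2)$. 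Therefore
$$\Phi\bigl((\widetilde a_1,\dot t_1)(\widetilde a_2,\dot t_2)\bigr)=g_1g_2c'(\dot t_1,\dot t_2)\kappa(\dot t_1\dot t_2),$$
while the product of images is $g_1\kappa(\dot t_1)\cdot g_2\kappa(\dot t_2)=g_1g_2\kappa(\dot t_1)\kappa(\dot t_2)$, which by (\ref{tttt}) equals $g_1g_2c'(\dot t_1,\dot t_2)\kappa(\dot t_1\dot t_2)$. Multiplicativity follows.

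Commutativity of the diagram is then bookkeeping. For the left square, $\Phi(\widetilde a,\dot 1)=g$, which agrees with the left column map $[g,\epsilon]\mapsto g$ followed by the inclusion $F^{\times 2}\hookrightarrow F^{\times}$. For the right square, $g\kappa(\dot t)$ has class $\dot t$ in $F^{\times}/F^{\times 2}$ since $g\in F^{\times 2}$. Finally $\ker\Phi=\{([1,\pm 1],\dot 1)\}$: from $g\kappa(\dot t)=1$ one forces $\kappa(\dot t)\in F^{\times 2}$, so $\dot t=\dot 1$ and $g=1$, while $\epsilon$ is unconstrained. This is exactly the image of the top $\{\pm 1\}$, so the middle column is exact.

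The delicate point, and the only place requiring care, is seeing why the cocycle $c_{\sqrt{\,}}$ does not obstruct the homomorphism property. The resolution is that $\varrho(a)=[a,1]$ has trivial $\epsilon$-coordinate, so the $c_{\sqrt{\,}}$-corrections in the product of $\widetilde{F^{\times 2}}$ live exclusively in the $\{\pm 1\}$-slot that $\Phi$ forgets; only the first coordinate survives, and its $c'$-contribution is precisely compensated by the section relation (\ref{tttt}) for $\kappa$. This alignment between the $c''$-twist in $\widetilde{F^{\times}}$ and the failure of $\kappa$ to be a homomorphism in $F^{\times}$ is the whole content of the lemma.
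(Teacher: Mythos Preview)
Your proof is correct and follows essentially the same approach as the paper: both compute the product in $\widetilde{F^{\times}}$ via the cocycle $c''$, observe that the $\{\pm 1\}$-component is discarded by the map while the $F^{\times 2}$-component picks up exactly $c'(\dot t_1,\dot t_2)$, and match this against the product $[g_1,\dot t_1][g_2,\dot t_2]=[g_1g_2c'(\dot t_1,\dot t_2),\dot t_1\dot t_2]$ in $F^{\times}$. You additionally spell out the kernel and surjectivity for the middle column, which the paper leaves implicit under ``it is clear that the map satisfies the commutative property.''
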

\begin{proof}
It is clear that the map satisfies the commutative property. So it suffices to show that  the map is also a group homomorphism. For $([g_1, \epsilon_1], \dot{t}_1)$, $([g_1, \epsilon_1], \dot{t}_1) \in  \widetilde{F^{\times}} $,
$$([g_1, \epsilon_1], \dot{t}_1)([g_2, \epsilon_2], \dot{t}_2)=([g_1, \epsilon_1][g_2, \epsilon_2]c''( \dot{t}_1,  \dot{t}_2), \dot{t}_1\dot{t}_2)$$
$$=([g_1, \epsilon_1][g_2, \epsilon_2][c'( \dot{t}_1,  \dot{t}_2), 1], \dot{t}_1\dot{t}_2)=([g_1g_2c'(\dot{t}_1,\dot{t}_2),c_{\sqrt{\ }}(g_1, g_2)c_{\sqrt{\ }}(g_1g_2, c'(\dot{t}_1,\dot{t}_2))], \dot{t}_1\dot{t}_2).$$
On the other hand, in $F^{\times}$:
$$[g_1, \dot{t}_1][g_2, \dot{t}_2]=[g_1g_2c'(\dot{t}_1,\dot{t}_2), \dot{t}_1\dot{t}_2].$$
\end{proof}
 Furthermore, the middle column of the commutative diagram (\ref{eq78}) is associated with a central extension of $F^{\times}$ by $\{\pm 1\}$. Let us denote the corresponding  $2$-cocycle by $c'''(-,-)$. The explicit expression for this cocycle is as follows:
 \begin{align}\label{c33}
c'''(t_1,t_2)=c_{\sqrt{\ }}(a_1^2, a_2^2)c_{\sqrt{\ }}(a_1^2a_2^2, c'(\dot{t}_1,\dot{t}_2)),
 \end{align}
for $t_1=a_1^2 \kappa(\dot{t}_1)$, $t_2=a_2^2 \kappa(\dot{t}_2) \in F^{\times}$.  Note that the group  homomorphism from  $ \widetilde{F^{\times2}}$ to $\widetilde{F^{\times}}$ is given as follows:
$$ \widetilde{F^{\times2}} \hookrightarrow  \widetilde{F^{\times}}; [a^2, \epsilon] \longmapsto [a^2, \epsilon].$$
Recall that $F^{\times} \simeq  \mathfrak{f} \times U_1\times  \langle \varpiup\rangle\simeq   \mathfrak{f}_1 \times \mathfrak{f}_2\times U_1\times  \langle \varpiup\rangle$.
\begin{lemma}\label{1234}
\begin{itemize}
\item[(1)] $c'''(a, b)=1$,  for  $a\in \mathfrak{f}_1$, $b\in \mathfrak{f}_2 \times U_1\times  \langle \varpiup\rangle$.
\item[(2)]  $c'''(a_1b_1,a_2b_2)=c'''(a_1, a_2)$, for  $a_1, a_2\in \mathfrak{f}_1$, $b_1,b_2\in \mathfrak{f}_2 \times U_1\times  \langle \varpiup\rangle$.
\item[(3)] Let $a_1=\zeta_1^{2i_1+l_1}$, $a_2=\zeta_1^{2i_2+l_2}$, for $0\leq i_1, i_2 \leq 2^{k-1}-1$, $0\leq l_1,l_2\leq 1$. Then:
$$c'''(a_1,a_2)=\left\{ \begin{array}{lr}
c_{\sqrt{\ }}(\zeta_1^{2i_1}, \zeta_1^{2i_2})c_{\sqrt{\ }}(\zeta_1^{2i_1+2i_2}, \zeta_1^2) & l_1=l_2=1,\\
c_{\sqrt{\ }}(\zeta_1^{2i_1}, \zeta_1^{2i_2}) &  \textrm{ otherwise.}\end{array}\right.$$
\end{itemize}
\end{lemma}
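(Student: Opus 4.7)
The plan is to unwind the defining formula \eqref{c33} for $c'''$ and to exploit the description of $c_{\sqrt{\,}}$ from Section \ref{Ftimes}(2), according to which $c_{\sqrt{\,}}$ depends only on the $\zeta_1$-component of its arguments. As a preliminary observation, since $\mathfrak{f}_2$ has odd order $l$ we have $\mathfrak{f}_2\subseteq F^{\times 2}$, and also $U_1\subseteq F^{\times 2}$. Thus for $b\in\mathfrak{f}_2\times U_1\times\langle\varpiup\rangle$ one has $\dot{b}\in\{\dot{1},\dot{\varpiup}\}$, and the $F^{\times 2}$-part of $b$ contains no $\zeta_1$-factor.

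For (1), I would write $a=a_0^2\kappa(\dot{a})$ and $b=b_0^2\kappa(\dot{b})$. Since $\dot{a}\in\{\dot{1},\dot{\zeta_1}\}$ and $\dot{b}\in\{\dot{1},\dot{\varpiup}\}$, the table for $c'$ (in particular $c'(\dot{\zeta_1},\dot{\varpiup})=1$) yields $c'(\dot{a},\dot{b})=1$, so the second factor of \eqref{c33} collapses to $c_{\sqrt{\,}}(a_0^2b_0^2,1)=1$. The first factor $c_{\sqrt{\,}}(a_0^2,b_0^2)$ reduces to $c_{\sqrt{\,}}(\zeta_1^{2i},\zeta_1^{0})=1$ because $b_0^2$ has no $\zeta_1$-component.

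For (2), I would write $a_i=(a_i')^2\kappa(\dot{a}_i)$ and $b_i=(b_i')^2\kappa(\dot{b}_i)$. Since $c'(\dot{a}_i,\dot{b}_i)=1$, we have $\kappa(\dot{a}_i)\kappa(\dot{b}_i)=\kappa(\dot{a}_i\dot{b}_i)$ with no correction, so $\dot{a_ib_i}=\dot{a}_i\dot{b}_i$ and the $F^{\times 2}$-part of $a_ib_i$ equals $(a_i')^2(b_i')^2$. Its $\zeta_1$-component agrees with that of $(a_i')^2$ by the preliminary observation, so the first factor of \eqref{c33} for $c'''(a_1b_1,a_2b_2)$ matches $c_{\sqrt{\,}}((a_1')^2,(a_2')^2)$. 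For the second factor, the table for $c'$ yields $c'(\dot{a}_1\dot{b}_1,\dot{a}_2\dot{b}_2)=\zeta_1^{2l_1l_2}\varpiup^{2l_1'l_2'}$, where $l_i, l_i'$ record the parities of the $\zeta_1$- and $\varpiup$-exponents of $a_i,b_i$; its $\zeta_1$-component equals that of $c'(\dot{a}_1,\dot{a}_2)=\zeta_1^{2l_1l_2}$, so the second factor also matches, and we conclude $c'''(a_1b_1,a_2b_2)=c'''(a_1,a_2)$.

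Part (3) is then immediate upon substituting $a_i=\zeta_1^{2i_i+l_i}$ into \eqref{c33}: in the formula's notation $a_i^2=\zeta_1^{2i_i}$ and $c'(\dot{a}_1,\dot{a}_2)=\zeta_1^{2l_1l_2}$, which yields the stated case split on whether $l_1=l_2=1$. The one delicate point throughout is verifying $\kappa(\dot{a}_i)\kappa(\dot{b}_i)=\kappa(\dot{a}_i\dot{b}_i)$, i.e.\ the absence of a correction cocycle precisely when one argument is $\dot{\zeta_1}$ and the other is $\dot{\varpiup}$; once this is established, the rest is routine bookkeeping of $\zeta_1$-components via the formula in Section \ref{Ftimes}(2).
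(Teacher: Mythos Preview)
Your proof is correct and follows essentially the same approach as the paper: direct evaluation of the formula \eqref{c33} using that $c_{\sqrt{\,}}$ depends only on the $\zeta_1$-component and the explicit values of $c'$. The paper's own proof is much terser---for (2) it simply writes ``It follows from (1)'' and for (3) ``It follows from (\ref{c33})''---but your expanded computation (in particular the check that $c'(\dot{a}_1\dot{b}_1,\dot{a}_2\dot{b}_2)=\zeta_1^{2l_1l_2}\varpiup^{2l_1'l_2'}$ has the same $\zeta_1$-component as $c'(\dot{a}_1,\dot{a}_2)$) is exactly what underlies those remarks.
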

\begin{proof}
1) Let us write $a=a_1^2 \kappa(\dot{a})$, $b=b_1^2 \kappa(\dot{b})$. By (\ref{c33}),
$$c'''(a,b)=c_{\sqrt{\ }}(a_1^2,b_1^2)c_{\sqrt{\ }}(a_1^2b_1^2, c'(\dot{a},\dot{b})).$$
Then: $c_{\sqrt{\ }}(a_1^2,b_1^2)=1$ and  $\dot{b} \in \{ \dot{1}, \dot{\varpiup}\}$. So $c'(\dot{a},\dot{b})=1$ or $\varpiup^2$. Hence $c_{\sqrt{\ }}(a_1^2b_1^2, c'(\dot{a},\dot{b}))=1$. \\
2) It follows from (1).\\
3) It follows from (\ref{c33}).
\end{proof}
As a consequence,  $\widetilde{F^{\times}} \simeq \widetilde{\mathfrak{f}_1} \times  \mathfrak{f}_2 \times U_1\times  \langle \varpiup\rangle$.
\subsection{$\widetilde{\GSp}(W)$}
Through the projection $\GSp(W) \longrightarrow F^{\times}$, we lift $c'''(-,-)$ from $F^{\times}$ to $\GSp(W)$. Associated to this cocycle, there exists an extension  of $\GSp(W)$ by $\{\pm 1\}$:
$$1 \longrightarrow \{\pm 1\} \longrightarrow \widetilde{\GSp}(W) \longrightarrow \GSp(W) \longrightarrow 1.$$
Composed with $\GSp(W) \longrightarrow F^{\times}/F^{\times 2}$, there  is  a group homomorphism: $ \widetilde{\GSp}(W) \longrightarrow F^{\times}/F^{\times 2}$. Let $\widetilde{F^{\times }\Sp}(W)$ denote its kernel. Then there exists the following commutative diagram:
\[
\begin{CD}\label{eq8}
 @. 1@. 1 @.1 @. \\
@.@VVV @VVV  @VVV \\
1 @>>> \{\pm 1\}@>>>   \{\pm 1\} @>>>1 @>>> 1\\
@.@VVV @VVV  @VVV \\
1 @>>> \widetilde{F^{\times }\Sp}(W)@>>>  \widetilde{\GSp}(W) @>\widetilde{\dot{\lambda}}>>F^{\times}/F^{\times 2} @>>> 1\\
@.@VVV @VVV @VV{=}V \\
1 @>>> F^{\times}\Sp(W)@>>> \GSp(W)  @>\dot{\lambda}>>F^{\times}/F^{\times 2} @>>> 1\\
@.@VVV @VVV  @VVV \\
 @. 1@. 1 @.1 @. \\
\end{CD}
\]
\begin{lemma}
There exists a group homomorphism:
$\widetilde{\lambda}: \widetilde{\GSp}(W)  \longrightarrow \widetilde{F^{\times}}; [g, \epsilon] \longmapsto [\lambda_g, \epsilon]$.
\end{lemma}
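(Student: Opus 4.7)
The plan is a direct verification, the content of which is that the cocycle defining $\widetilde{\GSp}(W)$ was constructed as the pullback of the cocycle $c'''$ on $F^{\times}$ along the similitude character $\lambda$. By the very definition of $\widetilde{\GSp}(W)$ (as the central extension by $\{\pm 1\}$ associated to $c'''(\lambda_{g_1},\lambda_{g_2})$) and of $\widetilde{F^{\times}}$ (as the central extension by $\{\pm 1\}$ associated to $c'''(t_1,t_2)$, which is exactly the middle column of the diagram (\ref{eq78})), the two multiplication laws read
\begin{equation*}
[g_1,\epsilon_1][g_2,\epsilon_2]=[g_1g_2,\ \epsilon_1\epsilon_2\,c'''(\lambda_{g_1},\lambda_{g_2})],\qquad [t_1,\epsilon_1][t_2,\epsilon_2]=[t_1t_2,\ \epsilon_1\epsilon_2\,c'''(t_1,t_2)].
\end{equation*}

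First I would check that $\widetilde{\lambda}$ is well defined, which is immediate because it sends the central $\pm 1$ to the central $\pm 1$ and leaves $\epsilon$ untouched. Then I would compute
\begin{equation*}
\widetilde{\lambda}\bigl([g_1,\epsilon_1][g_2,\epsilon_2]\bigr)=\widetilde{\lambda}\bigl([g_1g_2,\ \epsilon_1\epsilon_2\,c'''(\lambda_{g_1},\lambda_{g_2})]\bigr)=[\lambda_{g_1g_2},\ \epsilon_1\epsilon_2\,c'''(\lambda_{g_1},\lambda_{g_2})],
\end{equation*}
and compare this with
\begin{equation*}
\widetilde{\lambda}([g_1,\epsilon_1])\,\widetilde{\lambda}([g_2,\epsilon_2])=[\lambda_{g_1},\epsilon_1][\lambda_{g_2},\epsilon_2]=[\lambda_{g_1}\lambda_{g_2},\ \epsilon_1\epsilon_2\,c'''(\lambda_{g_1},\lambda_{g_2})].
\end{equation*}
The two right-hand sides coincide because $\lambda:\GSp(W)\to F^{\times}$ is itself a group homomorphism, so $\lambda_{g_1g_2}=\lambda_{g_1}\lambda_{g_2}$.

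There is no real obstacle here; the only conceptual point worth making explicit is that the same cocycle $c'''$ is used on both sides, so the verification reduces to the naturality of central extensions under a group homomorphism. Compatibility with the displayed commutative diagram (\ref{eq78}) and with the exact sequence $1\to\widetilde{F^{\times 2}}\to\widetilde{F^{\times}}\to F^{\times}/F^{\times 2}\to 1$ is then automatic, since $\widetilde{\lambda}$ restricts to the identity on the central $\{\pm 1\}$ and descends to $\lambda$ on the quotient by $\{\pm 1\}$.
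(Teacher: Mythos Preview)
Your proof is correct and essentially identical to the paper's: both write out the multiplication laws in $\widetilde{\GSp}(W)$ and $\widetilde{F^{\times}}$ using the cocycle $c'''$, compute $\widetilde{\lambda}([g_1,\epsilon_1][g_2,\epsilon_2])$ and $\widetilde{\lambda}([g_1,\epsilon_1])\widetilde{\lambda}([g_2,\epsilon_2])$, and observe they agree because $\lambda_{g_1g_2}=\lambda_{g_1}\lambda_{g_2}$. Your added remarks on the pullback interpretation and well-definedness are fine but not needed.
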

\begin{proof}
For $[g_1, \epsilon_1] , [g_2, \epsilon_2] \in \widetilde{\GSp}(W)$, $$[g_1, \epsilon_1] [g_2, \epsilon_2]=[g_1g_2, c'''(\lambda_{g_1},\lambda_{g_2}) \epsilon_1 \epsilon_2].$$
Then:
$$\widetilde{\lambda}([g_1, \epsilon_1] )\widetilde{\lambda}([g_2, \epsilon_2]  )=[\lambda_{g_1}, \epsilon_1][\lambda_{g_2}, \epsilon_2]=[\lambda_{g_1}\lambda_{g_2}, c'''(\lambda_{g_1},\lambda_{g_2}) \epsilon_1 \epsilon_2],$$
$$\widetilde{\lambda}([g_1, \epsilon_1] [g_2, \epsilon_2])=\widetilde{\lambda}([g_1g_2, c'''(\lambda_{g_1},\lambda_{g_2}) \epsilon_1 \epsilon_2])=[\lambda_{g_1g_2}, c'''(\lambda_{g_1},\lambda_{g_2}) \epsilon_1 \epsilon_2].$$
\end{proof}
Note that the projection  $ \widetilde{\GSp}(W) \longrightarrow F^{\times}/F^{\times 2}$,  factors through $\widetilde{\lambda}$.
\begin{lemma}\label{ste}
\begin{itemize}
\item[(1)] $\Sp(W) \simeq \ker(\widetilde{\lambda}); g \longmapsto [g,1]$.
\item[(2)] There exists a group monomorphism: $\sqrt{\,} :\widetilde{F^{\times 2}} \longrightarrow \widetilde{\GSp}(W); [a, \epsilon] \longrightarrow [\sqrt{a}\epsilon, \epsilon]$.   Moreover the image belongs to  the center of  $\widetilde{\GSp}(W)$, and $ \sqrt{\widetilde{F^{\times2}}}\cap \Sp(W)=\{1\}$.
\item[(3)] The above group $\widetilde{F^{\times }\Sp}(W) =\sqrt{\widetilde{F^{\times 2}} }\Sp(W)$.
\end{itemize}
\end{lemma}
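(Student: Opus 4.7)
The plan is to handle the three assertions in order, with each one reducing to a transparent computation once we unwind the cocycle $c'''$ on the relevant elements. The overarching principle is that for $a \in F^{\times 2}$ the element $\sqrt{a}\,I \in \GSp(W)$ is a scalar matrix with similitude $(\sqrt{a})^2 = a$, so the data $[\sqrt{a}\epsilon,\epsilon]$ really does sit in the fibre of $\widetilde{\GSp}(W)$ above the similitude $a$, and everything the lemma claims is a shadow of that fact.

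For part (1), I would simply read off the kernel of $\widetilde{\lambda}\colon[g,\epsilon]\mapsto[\lambda_g,\epsilon]$: an element is killed iff $\lambda_g=1$ in $F^\times$ and $\epsilon=1$ in the $\{\pm 1\}$-fibre, so the kernel is $\Sp(W)\times\{1\}$ set-theoretically. That this is a group isomorphism comes from noting $c'''(1,1)=c_{\sqrt{\,}}(1,1)c_{\sqrt{\,}}(1,c'(\dot 1,\dot 1))=1$ via \eqref{c33} and the normalisation $\sqrt{1}=1$, so $[g_1,1][g_2,1]=[g_1g_2,1]$.

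For part (2), the nontrivial input is the homomorphism property of $\sqrt{\,}\colon\widetilde{F^{\times 2}}\to\widetilde{\GSp}(W)$. For $a_1,a_2\in F^{\times 2}$, multiplication in $\widetilde{F^{\times 2}}$ is governed by $c_{\sqrt{\,}}(a_1,a_2)$, while in $\widetilde{\GSp}(W)$ the relevant cocycle value is $c'''(\lambda(\sqrt{a_1}\epsilon_1),\lambda(\sqrt{a_2}\epsilon_2))=c'''(a_1,a_2)$. Specialising \eqref{c33} to $\dot t_1=\dot t_2=\dot 1$ (since $a_i\in F^{\times 2}$) collapses the second factor to $1$ and leaves exactly $c_{\sqrt{\,}}(a_1,a_2)$, so the two multiplications agree; injectivity is immediate since $[\sqrt{a}\epsilon,\epsilon]=[1,1]$ forces $\epsilon=1$ and $\sqrt{a}=1$, hence $a=1$. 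For centrality, the first coordinate $\sqrt{a}\epsilon\, I$ is a scalar matrix and commutes with every $g\in\GSp(W)$, while the cocycle obstructions on the two sides $[\sqrt{a}\epsilon,\epsilon][g,\epsilon']$ and $[g,\epsilon'][\sqrt{a}\epsilon,\epsilon]$ are $c'''(a,\lambda_g)$ and $c'''(\lambda_g,a)$ respectively; but $a\in F^{\times 2}$ means $\dot a=\dot 1$, so both values reduce by \eqref{c33} to $c_{\sqrt{\,}}(a,\lambda_g^2\text{-part})$, which is symmetric. Finally $\sqrt{\widetilde{F^{\times 2}}}\cap\Sp(W)=\{1\}$ follows because an intersection element $[\sqrt{a}\epsilon,\epsilon]=[g,1]$ forces $\epsilon=1$ and $g=\sqrt{a}I$, a scalar matrix in $\Sp(W)$, which by $\lambda(\sqrt{a}I)=a$ gives $a=1$ and $g=I$.

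For part (3), the inclusion $\sqrt{\widetilde{F^{\times 2}}}\,\Sp(W)\subseteq\widetilde{F^{\times}\Sp}(W)$ follows by computing $[\sqrt{a}\epsilon,\epsilon][g,1]=[\sqrt{a}\epsilon g,\epsilon]$ (using $c'''(a,1)=1$) and observing $\lambda(\sqrt{a}\epsilon g)=a\in F^{\times 2}$, so $\widetilde{\dot\lambda}=\dot 1$. For the reverse inclusion, given $[h,\epsilon]$ with $a:=\lambda_h\in F^{\times 2}$, I would propose the explicit decomposition
\begin{equation*}
[h,\epsilon]\;=\;[\sqrt{a}\epsilon,\epsilon]\cdot\bigl[(\sqrt{a}\epsilon)^{-1}h,\,1\bigr],
\end{equation*}
where $(\sqrt{a}\epsilon)^{-1}h\in\Sp(W)$ since its similitude is $a^{-1}\lambda_h=1$; the product on the right expands, using $c'''(a,1)=1$, to exactly $[h,\epsilon]$. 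This gives the desired factorisation. The main obstacle I anticipate is purely bookkeeping in part (2): one must trust the formula \eqref{c33} collapses symmetrically whenever one of the arguments lies in $F^{\times 2}$, and the cleanest way to guarantee this is to invoke Lemma \ref{1234}(1)(2) so that $c'''$ restricted to $F^{\times 2}\times F^\times$ becomes just $c_{\sqrt{\,}}$ on the $\mathfrak{f}$-coordinate, killing all cross-terms.
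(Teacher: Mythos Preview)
Your proposal is correct and follows essentially the same approach as the paper: part (1) via the same kernel computation, part (2) via the same cocycle comparison $c'''(a_1,a_2)=c_{\sqrt{\,}}(a_1,a_2)$ for $a_i\in F^{\times 2}$ together with symmetry of $c'''$ when one argument is a square, and part (3) via an explicit factorisation. Your decomposition $[h,\epsilon]=[\sqrt{a}\epsilon,\epsilon]\cdot[(\sqrt{a}\epsilon)^{-1}h,1]$ in part (3) is in fact slightly cleaner than the paper's, which first writes $h=tg$ with $t\in F^{\times}$, $g\in\Sp(W)$ and then tracks the sign $t=\pm\sqrt{t^2}$ through the factorisation $[tg,\epsilon]=[\sqrt{t^2}\epsilon,\epsilon][\pm\epsilon g,1]$.
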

\begin{proof}
1) $\ker(\widetilde{\lambda})=\{ [g, 1]\mid g\in \Sp(W)\}$. Moreover, $c'''(\lambda_{g_1}, \lambda_{g_2})=1$, for any $g_1, g_2\in \Sp(W)$. So the result holds.\\
2) For $[a_i, \epsilon_i]\in \widetilde{F^{\times 2}}$,
$$[a_1, \epsilon_1][a_2, \epsilon_2]=[a_1a_2, c_{\sqrt{\ }}(a_1,a_2)\epsilon_1\epsilon_2];$$
$$ [\sqrt{a}_1\epsilon_1, \epsilon_1] [\sqrt{a}_2\epsilon_2, \epsilon_2]=[\sqrt{a}_1\epsilon_1\sqrt{a}_2\epsilon_2, c'''(\lambda_{\sqrt{a}_1\epsilon_1},\lambda_{\sqrt{a}_2\epsilon_2}) \epsilon_1 \epsilon_2]$$
$$=[\sqrt{a}_1\epsilon_1\sqrt{a}_2\epsilon_2, c_{\sqrt{\ }}(a_1,a_2) \epsilon_1 \epsilon_2];$$
$$[\sqrt{a_1a_2}c_{\sqrt{\ }}(a_1,a_2)\epsilon_1\epsilon_2, c_{\sqrt{\ }}(a_1,a_2)\epsilon_1\epsilon_2]=[\sqrt{a}_1\sqrt{a}_2\epsilon_1\epsilon_2,  c_{\sqrt{\ }}(a_1,a_2)\epsilon_1\epsilon_2].$$
If $[1,1]=\sqrt{[a, \epsilon]}=[\sqrt{a}\epsilon, \epsilon]$, then $\sqrt{a}=1=\epsilon$. So $\sqrt{\,}$ is an injective map. For
$[g_1,\epsilon_1]\in \widetilde{\GSp}(W)$, $[\sqrt{a}\epsilon, \epsilon]\in \sqrt{ \widetilde{F^{\times2}}}$,
$$[g_1,\epsilon_1][\sqrt{a}\epsilon, \epsilon]=[g_1\sqrt{a}\epsilon, c'''(\lambda_{g_1},\lambda_{\sqrt{a}\epsilon})\epsilon_1\epsilon]$$
$$=[g_1\sqrt{a}\epsilon, c'''(\lambda_{g_1},a)\epsilon_1\epsilon];$$
$$[\sqrt{a}\epsilon, \epsilon][g_1,\epsilon_1]=[\sqrt{a}\epsilon g_1, c'''(a,\lambda_{g_1})\epsilon_1\epsilon].$$
Since $g_1\sqrt{a}\epsilon=\sqrt{a}\epsilon g_1$ and $ c'''(\lambda_{g_1},a)=c'''(a,\lambda_{g_1})$, $[\sqrt{a}\epsilon, \epsilon]$ lies in the center of $\widetilde{\GSp}(W)$.
If $[\sqrt{a}\epsilon, \epsilon] \in \Sp(W)$,  then $\epsilon=1$, and $\sqrt{a}\epsilon\in \Sp(W)$. Hence $\sqrt{a}=\pm 1$, $a=1$, $\sqrt{a}=1$. \\
3) For $[g, \epsilon]\in \widetilde{\GSp}(W)$, the image of it in $F^{\times}/F^{\times 2}$ equals to $\dot{\lambda}_{g}$. So $\dot{\lambda}_{g}=\dot{1}$ iff $g\in F^{\times}\Sp(W)$. For $\epsilon\in \{\pm 1\}$, and  $tg\in F^{\times} \Sp(W)$ with $t\in F^{\times}$, $g\in \Sp(W)$, we have: 
$$[tg, \epsilon]=[\pm 1 \cdot\sqrt{t^2} g,\epsilon]=[\sqrt{t^2} \epsilon, \epsilon][\pm \epsilon g, 1].$$ Hence $\widetilde{F^{\times }\Sp}(W) \subseteq \sqrt{\widetilde{F^{\times 2}}} \Sp(W)$. It is clear that the other inclusion holds.
\end{proof}
Note that $F^{\times} \hookrightarrow \GSp(W); a\to a$. Let $\widetilde{F^{\times}}^c$ denote the subgroup  of $\widetilde{\GSp}(W)$ over $F^{\times}$.
\begin{lemma}
The center group of $\widetilde{\GSp}(W)$ is just $\widetilde{F^{\times}}^c$.
\end{lemma}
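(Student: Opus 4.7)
The plan is to establish the equality by proving both inclusions. For the containment $Z(\widetilde{\GSp}(W)) \subseteq \widetilde{F^{\times}}^c$, I would use the central extension
$$1 \longrightarrow \{\pm 1\} \longrightarrow \widetilde{\GSp}(W) \longrightarrow \GSp(W) \longrightarrow 1.$$
Since the kernel $\{\pm 1\}$ is central, any central element $[g,\epsilon] \in Z(\widetilde{\GSp}(W))$ must project to a central element of $\GSp(W)$. The center of $\GSp(W)$ consists of the scalar matrices, i.e., $F^{\times}$ embedded as in the statement of the lemma, so $g \in F^{\times}$ and hence $[g,\epsilon] \in \widetilde{F^{\times}}^c$ by definition of $\widetilde{F^{\times}}^c$ as the subgroup lying over $F^{\times}$.

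For the reverse containment, I would take $[a,\epsilon_1] \in \widetilde{F^{\times}}^c$ with $a \in F^{\times}$ (viewed as a scalar matrix) and an arbitrary $[h,\epsilon_2] \in \widetilde{\GSp}(W)$. Comparing the two products
\begin{align*}
[a,\epsilon_1][h,\epsilon_2] &= [ah,\, c'''(\lambda_a,\lambda_h)\epsilon_1\epsilon_2], \\
[h,\epsilon_2][a,\epsilon_1] &= [ha,\, c'''(\lambda_h,\lambda_a)\epsilon_1\epsilon_2],
\end{align*}
and using the fact that scalars commute with everything in $\GSp(W)$ (so $ah = ha$), the required commutation reduces to the symmetry identity $c'''(\lambda_a, \lambda_h) = c'''(\lambda_h, \lambda_a)$.

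The main technical input is therefore the symmetry of $c'''$. From formula (\ref{c33}),
$$c'''(t_1, t_2) = c_{\sqrt{\,}}(a_1^2, a_2^2)\, c_{\sqrt{\,}}(a_1^2 a_2^2, c'(\dot{t}_1, \dot{t}_2)),$$
with $t_i = a_i^2 \kappa(\dot{t}_i)$. The symmetry of $c'$ was recorded when the cocycle was introduced (explicitly, $c'(\dot{t}_1,\dot{t}_2) = c'(\dot{t}_2,\dot{t}_1)$), and the symmetry of $c_{\sqrt{\,}}$ is visible from the explicit formula in Section \ref{q143}(2), which is manifestly symmetric in the indices $(i_1, j_1)$ and $(i_2, j_2)$; Lemma \ref{1234} further reduces all computations to the $2$-primary component $\mathfrak{f}_1$ where symmetry is plain. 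Combining these observations, $c'''(\lambda_a, \lambda_h) = c'''(\lambda_h, \lambda_a)$ holds for every $a \in F^\times$ and every $h \in \GSp(W)$, completing the inclusion $\widetilde{F^{\times}}^c \subseteq Z(\widetilde{\GSp}(W))$.

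No step presents a serious obstacle: the first inclusion is immediate from general properties of central extensions, while the second reduces to a cocycle symmetry that is essentially built into the construction. The only point requiring care is ensuring that the symmetry of $c_{\sqrt{\,}}$ on $\widetilde{\mathfrak{f}^2}$ is actually satisfied by our specific choice of square-root map from Section \ref{q143}(2); this is transparent from the case-by-case formula given there, so no hidden issue arises.
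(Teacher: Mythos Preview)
Your proposal is correct and follows essentially the same route as the paper. The paper's proof is terser: for the inclusion $\widetilde{F^{\times}}^c \subseteq Z(\widetilde{\GSp}(W))$ it simply invokes Lemma \ref{ste}(2) (where the centrality of $\sqrt{\widetilde{F^{\times 2}}}$ was already established via the same cocycle symmetry $c'''(\lambda_{g_1},a)=c'''(a,\lambda_{g_1})$ that you verify directly) together with the decomposition $\widetilde{F^{\times}}^c=\sqrt{\widetilde{F^{\times 2}}}\,\mu_2$, and it dismisses the reverse inclusion as clear, exactly as you argue via projection to $\GSp(W)$.
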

\begin{proof}
Note that the center group contains $\sqrt{\widetilde{F^{\times 2}}} \mu_2=\widetilde{F^{\times}}^c$. The converse is clearly right.
\end{proof}
Note that we can not identity $\widetilde{F^{\times}}^c$ with $\widetilde{F^{\times}}$. However, $\widetilde{F^{\times 2}}$ is just  the covering subgroup  of $\widetilde{\GSp}(W)$ over $F^{\times 2}$.  
\begin{definition}\label{Fposit}
$\widetilde{F^{\times}}_+= \sqrt{\widetilde{F^{\times 2}}}$.
\end{definition}
Then there exist the following exact sequences of groups:
\begin{align}
1 \longrightarrow  \Sp(W) \longrightarrow \widetilde{\GSp}(W) \stackrel{\widetilde{\lambda}}{\longrightarrow}\widetilde{F^{\times}}  \longrightarrow 1;
\end{align}
\begin{align}
1 \longrightarrow \widetilde{F^{\times}}_+\Sp(W) \longrightarrow \widetilde{\GSp}(W) \stackrel{\dot{\widetilde{\lambda}}}{\longrightarrow}\widetilde{F^{\times}} / \widetilde{F^{\times 2}} \simeq F^{\times}/F^{\times 2} \longrightarrow 1.
\end{align}
\subsection{$\widehat{\widetilde{F^{\times}}}$}\label{case1mod412}
Keep the notations of Section \ref{Ftimes}. By Lemma \ref{1234},   $\widetilde{F^{\times}} \simeq \widetilde{\mathfrak{f}_1} \times  \mathfrak{f}_2 \times U_1\times  \langle \varpiup\rangle$. For our purpose,  let us choose a section map $\iota$ from $\widetilde{F^{\times}}$ to $\widetilde{\GSp}(W)$ in the following way:
\begin{align}
\iota:  U_1\longrightarrow  \widetilde{\GSp}(W); &u \longmapsto
(\begin{bmatrix}
\sqrt{u} & 0\\
0 & \sqrt{u} \end{bmatrix},1), \label{tttt1}\\
\iota:  \langle \varpiup\rangle    \longrightarrow  \widetilde{\GSp}(W);& \varpiup \longmapsto (\begin{bmatrix}
0& -I\\
\varpiup I& 0\end{bmatrix},1),\label{tttt2}\\
\iota:  \mathfrak{f}_2     \longrightarrow  \widetilde{\GSp}(W);& x\longmapsto  (\begin{bmatrix}
\sqrt{x} & 0\\
0 & \sqrt{x} \end{bmatrix},1), \label{tttt3}
\end{align}
\begin{align}
\iota: \widetilde{\mathfrak{f}_1}  \longrightarrow  \widetilde{\GSp}(W);&
  \left\{ \begin{array}{lr} (\zeta_1^{2i-1},1)\longmapsto
   ((-\zeta_1)^{i-1}\begin{bmatrix}
0&   -I\\
\zeta_1I& 0\end{bmatrix}, 1) & \textrm{ if }1\leq i\leq 2^{k-1},\\
 (\zeta_1^{2i},1)\longmapsto
  ((-\zeta_1)^{i},1) & \textrm{ if } 1\leq i\leq 2^{k-1}-1,\\
    (1, \epsilon)\longmapsto
   (\epsilon, \epsilon). &
   \end{array}\right.
\end{align}
For $a= \widetilde{a}_1a_2a_3a_4$ with $\widetilde{a}_1\in \widetilde{\mathfrak{f}_1} $, $a_2\in \mathfrak{f}_2$, $a_3\in U_1$,   $a_4\in \langle \varpiup\rangle$, let us define
 $$\iota(a)=\iota(\widetilde{a}_1)\iota(a_2)\iota(a_3)\iota(a_4).$$
 Then   the restriction of $\iota$ on $\mathfrak{f}_2 \times U_1\times \langle \varpiup\rangle $ is a  group homomorphism.
  \begin{lemma}
  \begin{itemize}
  \item[(1)] The restriction of $\iota$ on $\widetilde{\mathfrak{f}^2_1} $ is a group homomorphism, given by $[a^2, \epsilon] \longrightarrow [\sqrt{a^2}\epsilon, \epsilon]$.
  \item[(2)] The restriction of $\iota$ on $\widetilde{F^{\times 2}} $ is a group homomorphism, given by $[a^2, \epsilon] \longrightarrow [\sqrt{a^2}\epsilon, \epsilon]$.
  \item[(3)] The restriction of $\iota$ on $\widetilde{\mathfrak{f}_1} $ is a group homomorphism.
  \end{itemize}
  \end{lemma}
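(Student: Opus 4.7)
The plan is to handle parts (1) and (2) by reducing them to Lemma \ref{ste}(2), which already proves that the map $\sqrt{\,}: \widetilde{F^{\times 2}} \to \widetilde{\GSp}(W)$ given by $[a, \epsilon] \mapsto [\sqrt{a}\epsilon, \epsilon]$ is a central group monomorphism. The claimed formula in (1)(2) coincides with this $\sqrt{\,}$ map, so it suffices to verify on each of the explicit pieces of the decomposition $\widetilde{F^{\times 2}} \simeq \widetilde{\mathfrak{f}_1^2} \times \mathfrak{f}_2 \times U_1 \times \langle \varpiup^2 \rangle$ that the stated formulas for $\iota$ agree with $\sqrt{\,}$. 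Concretely: on $U_1$ and $\mathfrak{f}_2$ the definitions (\ref{tttt1}) and (\ref{tttt3}) read $u \mapsto (\sqrt{u}, 1)$; on $\langle \varpiup^2 \rangle$ one uses the matrix identity $\iota(\varpiup)^{2k} = ((-\varpiup)^k I, 1) = (\sqrt{\varpiup^{2k}}, 1)$, which follows by squaring (\ref{tttt2}) and using $\sqrt{\varpiup^2} = -\varpiup$; on $\widetilde{\mathfrak{f}_1^2}$ the recipes $(\zeta_1^{2i}, 1) \mapsto ((-\zeta_1)^i, 1)$ and $(1, \epsilon) \mapsto (\epsilon, \epsilon)$ match $(\sqrt{\zeta_1^{2i}}, 1)$ and $(\sqrt{1}\,\epsilon, \epsilon)$ by the chosen square root. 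Both (1) and (2) then follow immediately.

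For part (3), the task is to check multiplicativity on all of $\widetilde{\mathfrak{f}_1}$, in particular between elements $(\zeta_1^{2i-1}, 1)$ of odd index. The two key preliminary computations are $\iota(\zeta_1)^2 = \bigl[\begin{smallmatrix} 0 & -I\\ \zeta_1 I & 0\end{smallmatrix}\bigr]^2 = -\zeta_1 I$ and $\lambda_{\iota(\zeta_1)} = \zeta_1$, so that $\widetilde{\lambda}$ sends $\iota(\zeta_1, 1)$ to the correct image. With these in hand, I would verify $\iota((\zeta_1^a, \epsilon_1)) \cdot \iota((\zeta_1^b, \epsilon_2)) = \iota((\zeta_1^a, \epsilon_1)(\zeta_1^b, \epsilon_2))$ case by case on the parities of $a, b$. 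The even-even case is exactly (1); reduction to $(1, \epsilon)$ is immediate from $\iota(1, \epsilon) = (\epsilon, \epsilon)$. The even-odd and odd-even cases are handled similarly to the following main case.

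The critical odd-odd case, with $a = 2i-1$ and $b = 2j-1$, proceeds by computing both sides. The left-hand side equals
\begin{equation*}
\bigl((-\zeta_1)^{i-1}\iota(\zeta_1),\,1\bigr)\cdot\bigl((-\zeta_1)^{j-1}\iota(\zeta_1),\,1\bigr) = \bigl((-\zeta_1)^{i+j-1},\, c'''(\zeta_1^{2i-1}, \zeta_1^{2j-1})\bigr),
\end{equation*}
using $\iota(\zeta_1)^2 = -\zeta_1 I$ and the multiplication law in $\widetilde{\GSp}(W)$ with $\lambda_{\iota(\zeta_1)} = \zeta_1$. On the right-hand side, the product in $\widetilde{\mathfrak{f}_1}$ gives $(\zeta_1^{2(i+j-1)}, c'''(\zeta_1^{2i-1}, \zeta_1^{2j-1}))$, and applying $\iota$ via part (1) yields $((-\zeta_1)^{\overline{i+j-1}} \cdot c'''(\zeta_1^{2i-1}, \zeta_1^{2j-1}), c'''(\zeta_1^{2i-1}, \zeta_1^{2j-1}))$, where $\overline{i+j-1}$ denotes the reduction of $i+j-1$ modulo $2^{k-1}$.

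The main obstacle is handling the wraparound. When $i+j-1 \geq 2^{k-1}$, we have $(-\zeta_1)^{i+j-1} = -(-\zeta_1)^{\overline{i+j-1}}$, because $\zeta_1^{2^{k-1}} = -1$ and $(-1)^{2^{k-1}} = 1$ (for $k \geq 2$). Matching the two sides therefore requires $c'''(\zeta_1^{2i-1}, \zeta_1^{2j-1}) = -1$ in exactly this wraparound range and $=+1$ otherwise. Using Lemma \ref{1234}(3) we expand $c'''(\zeta_1^{2i-1}, \zeta_1^{2j-1}) = c_{\sqrt{\,}}(\zeta_1^{2(i-1)}, \zeta_1^{2(j-1)}) \cdot c_{\sqrt{\,}}(\zeta_1^{2(i+j-2)}, \zeta_1^2)$, and then use the explicit formula for $c_{\sqrt{\,}}$ from Section \ref{Ftimes}(2), together with the reduction rule $\zeta_1^{2^k} = 1$ to bring the indices of the second factor into the range $[0, 2^{k-1}-1]$. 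A short bookkeeping argument shows that the product of these two cocycle values is $-1$ exactly when $i+j-1 \geq 2^{k-1}$, completing the case. The odd-even and even-odd cases follow from an almost identical but shorter computation, since $\iota(\zeta_1^{\mathrm{even}})$ is central in the relevant subgroup and commutes with $\iota(\zeta_1)$ up to the same cocycle bookkeeping.
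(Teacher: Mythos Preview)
Your proposal is correct and, for part (3), follows essentially the same case-by-case strategy as the paper: both isolate the odd--odd product as the substantive case, compute $\iota(\zeta_1)^2=-\zeta_1 I$, and match the two sides by checking that $c'''(\zeta_1^{2i-1},\zeta_1^{2j-1})=-1$ exactly when $i+j-1\ge 2^{k-1}$, using Lemma~\ref{1234}(3) and the explicit formula for $c_{\sqrt{\,}}$ from Section~\ref{Ftimes}(2). The paper organizes this as separate computations of $\iota(\widetilde a_1\widetilde a_2)$ and $\iota(\widetilde a_1)\iota(\widetilde a_2)$ in the three parity cases (i)(j)(k), but the content is the same.

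Where you differ is in parts (1) and (2). The paper verifies the homomorphism property for (1) by a direct cocycle computation (essentially repeating the calculation already done in Lemma~\ref{ste}(2)), and then deduces (2) from (1) together with Lemma~\ref{1234} and the explicit formulas~(\ref{tttt1})--(\ref{tttt3}). You instead observe that the claimed formula $[a^2,\epsilon]\mapsto[\sqrt{a^2}\,\epsilon,\epsilon]$ is literally the map $\sqrt{\,}$ of Lemma~\ref{ste}(2), so the homomorphism property is already established there, and the only remaining task is to check that the piecewise definition of $\iota$ agrees with $\sqrt{\,}$ on each factor of $\widetilde{F^{\times 2}}\simeq\widetilde{\mathfrak f_1^2}\times\mathfrak f_2\times U_1\times\langle\varpiup^2\rangle$. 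This is a genuine economy: it avoids redoing the cocycle calculation and makes the logical dependence on Lemma~\ref{ste}(2) explicit. The only small point to add is that on $\langle\varpiup^2\rangle$ your identity $\iota(\varpiup)^{2}=(-\varpiup I,1)$ uses $c'''(\varpiup,\varpiup)=1$, which is immediate from Lemma~\ref{1234}(2).
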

  \begin{proof}
  (1) It suffices to show that it is a group homomorphism.  Note that   $$[a_1^2, \epsilon_1] [a_2^2, \epsilon_2]=[a_1^2a_2^2, c_{\sqrt{\,}}(a_1^2,a_2^2) \epsilon_1\epsilon_2].$$
  So $$\iota([a_1^2, \epsilon_1])\iota([a_2^2, \epsilon_2])=[\sqrt{a_1^2}\epsilon_1, \epsilon_1][\sqrt{a_2^2}\epsilon_2, \epsilon_2]=[\sqrt{a_1^2}\epsilon_1\sqrt{a_2^2}\epsilon_2, \epsilon_1\epsilon_2 c_{\sqrt{\,}}(a_1^2,a_2^2)].$$
  $$\iota([a_1^2a_2^2, c_{\sqrt{\,}}(a_1^2,a_2^2) \epsilon_1\epsilon_2])=[\sqrt{a_1^2a_2^2} c_{\sqrt{\,}}(a_1^2,a_2^2)\epsilon_1\epsilon_2, c_{\sqrt{\,}}(a_1^2,a_2^2)\epsilon_1\epsilon_2]=[\sqrt{a_1^2}\sqrt{a_2^2}\epsilon_1\epsilon_2, c_{\sqrt{\,}}(a_1^2,a_2^2)\epsilon_1\epsilon_2 ].$$
  (2) It follows from (1) and Lemma \ref{1234} and  (\ref{tttt1}),(\ref{tttt2}),(\ref{tttt3}). \\
  (3)(a)   Let us write  $\widetilde{a}_1=(g_1, 1)$, $ \widetilde{a}_2=(g_2, 1)\in \widetilde{\mathfrak{f}_1}$,  $g_j=\zeta_1^{2i_j+l_j} \in F^{\times}$, $0\leq i_1,i_2\leq 2^{k-1}-1$, $0\leq l_1,l_2\leq 1$. Then:
  $$\widetilde{a}_1\widetilde{a}_2=[\zeta_1^{2(i_1+i_2)+l_1+l_2}, c'''(g_1,g_2)], $$
  for $c'''(g_1,g_2)=\left\{ \begin{array}{lr}
c_{\sqrt{\ }}(\zeta_1^{2i_1}, \zeta_1^{2i_2})c_{\sqrt{\ }}(\zeta_1^{2i_1+2i_2}, \zeta_1^2) & l_1=l_2=1,\\
c_{\sqrt{\ }}(\zeta_1^{2i_1}, \zeta_1^{2i_2}) &  \textrm{ otherwise.}\end{array}\right.$  \\
i) If $0\leq l_1\neq l_2\leq 1$, $c'''(g_1,g_2)=c_{\sqrt{\ }}(\zeta_1^{2i_1}, \zeta_1^{2i_2})$.
$$\iota(\widetilde{a}_1\widetilde{a}_2)=[\frac{(-\zeta_1)^{i_1}(-\zeta_1)^{i_2}}{ c_{\sqrt{\ }}(\zeta_1^{2i_1}, \zeta_1^{2i_2})} \begin{bmatrix}
0&   -I\\
\zeta_1I& 0\end{bmatrix}, 1][c'''(g_1,g_2),c'''(g_1,g_2)]=[(-\zeta_1)^{i_1}(-\zeta_1)^{i_2} \begin{bmatrix}
0&   -I\\
\zeta_1I& 0\end{bmatrix},  c'''(g_1,g_2)].$$
j)  If $l_1= l_2=0$, $c'''(g_1,g_2)=c_{\sqrt{\ }}(\zeta_1^{2i_1}, \zeta_1^{2i_2})$.
$$\iota(\widetilde{a}_1\widetilde{a}_2)=[\frac{(-\zeta_1)^{i_1}(-\zeta_1)^{i_2}}{ c_{\sqrt{\ }}(\zeta_1^{2i_1}, \zeta_1^{2i_2})}, 1][c'''(g_1,g_2),c'''(g_1,g_2)]=[(-\zeta_1)^{i_1}(-\zeta_1)^{i_2},  c'''(g_1,g_2)].$$
k)  If $l_1= l_2=1$, $c'''(g_1,g_2)=c_{\sqrt{\ }}(\zeta_1^{2i_1}, \zeta_1^{2i_2})c_{\sqrt{\ }}(\zeta_1^{2i_1+2i_2}, \zeta_1^2) $.
$$\iota(\widetilde{a}_1\widetilde{a}_2)=[\frac{(-\zeta_1)^{i_1}(-\zeta_1)^{i_2}(-\zeta_1)^{1}}{ c_{\sqrt{\ }}(\zeta_1^{2i_1}, \zeta_1^{2i_2})c_{\sqrt{\ }}(\zeta_1^{2i_1+2i_2}, \zeta_1^2)}, 1][c'''(g_1,g_2),c'''(g_1,g_2)]=[(-\zeta_1)^{i_1}(-\zeta_1)^{i_2}(-\zeta_1)^{1},  c'''(g_1,g_2)].$$
 (3)(b) $$\iota(\widetilde{a}_1)=[(-\zeta_1)^{i_1}\begin{bmatrix}
0&   -I\\
\zeta_1I& 0\end{bmatrix}^{l_1}, 1], \quad\quad \iota(\widetilde{a}_1)=[(-\zeta_1)^{i_1}\begin{bmatrix}
0&   -I\\
\zeta_1I& 0\end{bmatrix}^{l_2}, 1];$$
$$\iota(\widetilde{a}_1)\iota(\widetilde{a}_2)=[(-\zeta_1)^{i_1+i_2}\begin{bmatrix}
0&   -I\\
\zeta_1I& 0\end{bmatrix}^{l_1+l_2}, c'''(g_1, g_2)].$$
By the above $(i)(j)(k)$, we know that $\iota(\widetilde{a}_1)\iota(\widetilde{a}_2)=\iota(\widetilde{a}_1\widetilde{a}_2)$.
  \end{proof}
However,  $\iota$ is not a group homomorphism on the whole group $\widetilde{F^{\times}}$. Let $\widehat{\widetilde{F^{\times}}}$ denote the group generated by these $\iota(\widetilde{F^{\times}})$ in $\widetilde{\GSp}(W)$. Let us define
\begin{align} D=\widehat{\widetilde{F^{\times}}} \cap \Sp(W).
\end{align}
\begin{lemma}\label{mod11}
 $D=\langle \begin{bmatrix}
\varpiup^{-1} \xi_1& 0\\
 0& \varpiup\xi_1^{-1}\end{bmatrix}\rangle.$
\end{lemma}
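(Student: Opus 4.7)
The plan is to exhibit $d_0 := \begin{bmatrix}\varpiup^{-1}\zeta_1 I & 0 \\ 0 & \varpiup\zeta_1^{-1}I\end{bmatrix}$ as essentially the single commutator $[\iota(\varpiup), \iota(\zeta_1)]$ inside $\widetilde{\GSp}(W)$ and to argue that this one element already generates $D$. For the easy inclusion $\langle d_0\rangle \subseteq D$, direct matrix multiplication yields $\iota(\varpiup)\iota(\zeta_1) = \begin{bmatrix}-\zeta_1 I & 0 \\ 0 & -\varpiup I\end{bmatrix}$, whereas $\iota(\zeta_1)\iota(\varpiup) = \iota(\zeta_1\varpiup) = \begin{bmatrix}-\varpiup I & 0 \\ 0 & -\zeta_1 I\end{bmatrix}$ (the first equality holding because $\iota$ is a homomorphism on $\widetilde{\mathfrak{f}_1}\cdot\langle\varpiup\rangle$ by construction). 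Their ratio is exactly $d_0$, which is diagonal with reciprocal scalar blocks, hence lies in $\Sp(W)$.

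For the reverse inclusion $D \subseteq \langle d_0\rangle$, I would prove a normal form: every element of $\widehat{\widetilde{F^{\times}}}$ can be written as $\iota(a)\cdot d_0^c$ for some $a \in \widetilde{F^{\times}}$ and $c \in \Z$. The preceding lemma gives that $\iota$ is a homomorphism on each of $\widetilde{F^{\times 2}}$, $\widetilde{\mathfrak{f}_1}$, $\mathfrak{f}_2$, $U_1$, $\langle\varpiup\rangle$; moreover $\iota(\mathfrak{f}_2)$, $\iota(U_1)$, and $\iota(\widetilde{F^{\times 2}})$ all sit inside the central subgroup $\widetilde{F^{\times}}_+$ of $\widetilde{\GSp}(W)$. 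After collecting these central scalars, every multiplicator $\iota(a)\iota(b)\iota(ab)^{-1}$ reduces to a commutator of the form $[\iota(\varpiup)^{l_4},\iota(\zeta_1)^{l_1}]$. Since $\iota(\varpiup)^2$ and $\iota(\zeta_1)^2$ are central scalars (equal to $\iota(\varpiup^2)$ and $\iota(\zeta_1^2)$ in $\widetilde{F^{\times}}_+$), this commutator depends only on $l_1, l_4$ modulo $2$ and equals a power of $d_0$. A direct computation also shows that conjugation by either $\iota(\zeta_1)$ or $\iota(\varpiup)$ inverts $d_0$, so $\langle d_0\rangle$ is normal in $\widehat{\widetilde{F^{\times}}}$; induction on word length then yields the normal form.

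Finally, since $\iota$ is a set-theoretic section of the projection $\widetilde{\lambda}:\widetilde{\GSp}(W)\to\widetilde{F^{\times}}$ (verified on the chosen generator of each factor) and $\widetilde{\lambda}(d_0)=1$ because $d_0\in\Sp(W)$, one has $\widetilde{\lambda}(\iota(a)d_0^c)=a$ in $\widetilde{F^{\times}}$. Thus $\iota(a)d_0^c\in\Sp(W)$ forces $a=1$, reducing the element to $d_0^c\in\langle d_0\rangle$, establishing $D\subseteq\langle d_0\rangle$. The main technical obstacle will be the middle step, where one must carefully unpack the four-factor decomposition of $\widetilde{F^{\times}}$ and track the scalar cocycle $c'''$ governing multiplication in $\widetilde{\GSp}(W)$, in order to confirm that all accumulated error terms really do collapse into a single cyclic contribution from $d_0$, rather than producing extra elements of $D$.
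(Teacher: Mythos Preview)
Your approach is essentially the same as the paper's: both identify $d_0$ via the commutation relation $\iota(\varpiup)\iota(\zeta_1)=\iota(\zeta_1)\iota(\varpiup)\,d_0$ and use that $\iota(\widetilde{F^{\times 2}})=\widetilde{F^{\times}}_+$ is central while $\iota(\zeta_1)^2,\iota(\varpiup)^2\in\widetilde{F^{\times}}_+$, so that $\widehat{\widetilde{F^{\times}}}$ is generated by central scalars together with $\iota(\zeta_1),\iota(\varpiup)$; the paper leaves the resulting normal form implicit, whereas you spell it out and close with the section property of $\iota$ under $\widetilde{\lambda}$.

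One small correction: your parenthetical that ``$\iota$ is a homomorphism on $\widetilde{\mathfrak{f}_1}\cdot\langle\varpiup\rangle$'' is not right---indeed the very computation you perform shows $\iota(\varpiup\zeta_1)\neq\iota(\varpiup)\iota(\zeta_1)$. The identity $\iota(\zeta_1\varpiup)=\iota(\zeta_1)\iota(\varpiup)$ you need is instead immediate from the \emph{definition} $\iota(a)=\iota(\widetilde{a}_1)\iota(a_2)\iota(a_3)\iota(a_4)$ given just before the lemma, so your argument is unaffected.
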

\begin{proof}
Note that  $\iota(\widetilde{F^{\times 2}})$ belongs to the center of $\widetilde{\GSp}(W)$. Moreover, $\widetilde{F^{\times}}/\widetilde{F^{\times 2}}\simeq F^{\times}/F^{\times 2}\simeq \{ \dot{1}, \dot{\zeta}_1, \dot{\varpiup}, \dot{(\zeta_1\varpiup)}\}$. Hence $\widetilde{F^{\times}}= \widetilde{F^{\times 2}} \cup \zeta_1\widetilde{F^{\times 2}}\cup  \varpiup \widetilde{F^{\times 2}}\cup \zeta_1\varpiup \widetilde{F^{\times 2}}$.  By definition, $\iota(\zeta_1\varpiup)=\iota(\zeta_1)\iota(\varpiup)$. By Lemma \ref{1234}, we have: 
$$\iota(\varpiup)\iota(\zeta_1)=(\begin{bmatrix}
0& -I\\
\varpiup I& 0\end{bmatrix}, 1)( \begin{bmatrix}
0&-I\\
 \xi_1 I& 0\end{bmatrix}, 1)=(\begin{bmatrix}
0& -I\\
\varpiup I& 0\end{bmatrix} \begin{bmatrix}
0&-I\\
 \xi_1 I& 0\end{bmatrix}, 1)$$
 $$=(\begin{bmatrix}
0& -I\\
 \xi_1I& 0\end{bmatrix}\begin{bmatrix}
0& -I\\
\varpiup I& 0\end{bmatrix} \begin{bmatrix}
\varpiup^{-1} \xi_1& 0\\
 0& \varpiup\xi_1^{-1}\end{bmatrix}, 1)=\iota(\zeta_1)\iota(\varpiup) ( \begin{bmatrix}
\varpiup^{-1} \xi_1& 0\\
 0& \varpiup\xi_1^{-1}\end{bmatrix}, 1).$$
\end{proof}

Then there exists an  exact sequence:
\begin{equation}\label{exact2mod41}
1 \longrightarrow D \longrightarrow\widehat{\widetilde{F^{\times}}}\stackrel{\lambda}{\longrightarrow}  \widetilde{F^{\times}}  \longrightarrow 1.
\end{equation}
Recall that $\widetilde{F^{\times}}^c$ is the subgroup of $\widetilde{\GSp}(W)$ over $F^{\times}$. Note that there exists a group homomorphism:
$$\iota=\sqrt{\,}: \widetilde{F^{\times 2}} \longrightarrow \widetilde{F^{\times }}^c\subseteq \widetilde{\GSp}(W); [a^2, \epsilon] \longmapsto [\sqrt{a^2} \epsilon, \epsilon].$$

Therefore, every element of $\widehat{\widetilde{F^{\times}}}$ can be written in the form
$$d_1 t_1 \iota(x)$$
for $ d_1\in D, x\in \{ 1,  \zeta_1, \varpiup, (\zeta_1\varpiup) \}, t_1\in \widetilde{F^{\times}}_+$.
 Let $D^2=\{ x^2\mid x\in D\}$, which is a normal subgroup of $\widehat{\widetilde{F^{\times}}}$. Recall the non-abelian group of order $8$:
\begin{align}
D_4=\langle a, b\mid a^4=b^2=1, ba=a^{-1}b\rangle.
\end{align}
\begin{lemma}\label{shex}
There exists a short exact sequence: $1\longrightarrow (\widetilde{F^{\times}}_+) D^2 \longrightarrow \widehat{\widetilde{F^{\times}}} \longrightarrow  D_4\longrightarrow 1$.
\end{lemma}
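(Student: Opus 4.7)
The plan is to realize $N := \widetilde{F^{\times}}_+ D^2$ as a normal subgroup of $\widehat{\widetilde{F^{\times}}}$ of index $8$, and then identify the resulting quotient with $D_4$ using the classes of $\iota(\zeta_1)$ and $\iota(\zeta_1\varpiup)$. The two main ingredients I would rely on are (i) the normal form $d_1 t_1 \iota(x)$ with $d_1 \in D$, $t_1 \in \widetilde{F^{\times}}_+$, $x \in \{1,\zeta_1,\varpiup,\zeta_1\varpiup\}$ displayed just before the lemma, and (ii) the commutation relation $\iota(\varpiup)\iota(\zeta_1) = \iota(\zeta_1)\iota(\varpiup)\,d$ established in Lemma~\ref{mod11}.

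First I would check normality of $N$. By Lemma~\ref{ste}, $\widetilde{F^{\times}}_+$ is central in $\widetilde{\GSp}(W)$, hence central in $\widehat{\widetilde{F^{\times}}}$. For $D^2$, write the generator of $D$ as $d = \begin{bmatrix}\varpiup^{-1}\zeta_1 I & 0 \\ 0 & \varpiup\zeta_1^{-1} I\end{bmatrix}$; a direct $2\times 2$-block calculation gives $\iota(\varpiup)\,d\,\iota(\varpiup)^{-1} = d^{-1} = \iota(\zeta_1)\,d\,\iota(\zeta_1)^{-1}$, and $d$ manifestly commutes with the scalar generators $\iota(u)$ for $u \in U_1$ and with $\iota(\mathfrak{f}_2)$. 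Hence every generator of $\widehat{\widetilde{F^{\times}}}$ normalizes $D$, so $D^2$ is normal and so is $N$. To count cosets, observe that the projection $\widehat{\widetilde{F^{\times}}} \to \GSp(W) \xrightarrow{\dot\lambda} F^{\times}/F^{\times 2}$ sends $d_1 t_1 \iota(x)$ to $\dot x$ (because $d_1 \in \Sp(W)$ and $\lambda(t_1) \in F^{\times 2}$), which separates the four $x$-classes. Within each $x$-fibre, the relation $\widetilde{F^{\times}}_+ \cap D = 1$ from Lemma~\ref{ste}(2) forces cosets to be indexed by $D/D^2 \cong \Z/2\Z$, producing exactly $2\cdot 4 = 8$ cosets.

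Finally, to identify the quotient with $D_4$, set $a := \iota(\zeta_1\varpiup)\,N$ and $b := \iota(\zeta_1)\,N$. Using $\iota(\zeta_1\varpiup) = \iota(\zeta_1)\iota(\varpiup)$, Lemma~\ref{mod11}, and the key fact (to be verified) that $\iota(\zeta_1)^2$ and $\iota(\varpiup)^2$ both lie in $\widetilde{F^{\times}}_+$, one computes
\[
\iota(\zeta_1\varpiup)^2 \;=\; \iota(\zeta_1)^2\,\iota(\varpiup)^2\,d^{-1} \;\equiv\; d^{-1} \pmod N,
\]
so $a^2 \equiv d^{-1}$ and $a^4 \equiv 1$. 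Similarly $b^2 \equiv 1$, and the dihedral relation follows from
\[
bab^{-1} \;\equiv\; \iota(\varpiup)\iota(\zeta_1) \;=\; \iota(\zeta_1)\iota(\varpiup)\,d \;\equiv\; a\,d \;\equiv\; a\cdot a^{-2} \;=\; a^{-1}.
\]
The main technical obstacle is precisely the assertion that $\iota(\zeta_1)^2,\iota(\varpiup)^2 \in \widetilde{F^{\times}}_+$ (as opposed to the larger centre $\widetilde{F^{\times}}^c$): this requires tracking the chosen square-root sections $\sqrt{\zeta_1^2} = -\zeta_1$ and $\sqrt{\varpiup^2} = -\varpiup$ from Section~\ref{Ftimes} through the cocycle formula~\eqref{c33} in order to confirm that both squares actually coincide with $\iota(\zeta_1^2)$ and $\iota(\varpiup^2)$ respectively in $\widetilde{\GSp}(W)$.
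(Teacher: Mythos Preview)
Your proposal is correct and follows essentially the same approach as the paper: both take $a$ and $b$ to be the images of $\iota(\zeta_1\varpiup)$ and $\iota(\zeta_1)$ and verify the dihedral relations $a^4=b^2=1$, $ba=a^{-1}b$ in the quotient. The paper proceeds by direct block-matrix computation (for instance expanding $\iota(\zeta_1\varpiup)^2$ and $\iota(\zeta_1\varpiup)^3$ explicitly and factoring out pieces in $D$ and $\widetilde{F^{\times}}_+$), whereas you organize the argument around the commutation relation of Lemma~\ref{mod11} together with the key observation $\iota(\zeta_1)^2,\iota(\varpiup)^2\in\widetilde{F^{\times}}_+$; you also make the normality of $N$ and the index count explicit, which the paper leaves implicit. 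The ``technical obstacle'' you flag is exactly what the paper computes in its line $b^2=\iota(\zeta_1)^2=(-\zeta_1,1)\in\widetilde{F^{\times}}_+$, and your anticipated verification via the chosen square roots $\sqrt{\zeta_1^2}=-\zeta_1$, $\sqrt{\varpiup^2}=-\varpiup$ is precisely the right one.
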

\begin{proof}
$\iota(\zeta_1\varpiup)=\iota(\zeta_1)\iota(\varpiup)=(\begin{bmatrix} -\varpiup I  & 0 \\ 0 & -\zeta_1 I\end{bmatrix}, 1)$.  Let $a$, $b$ denote the images of $\iota(\zeta_1\varpiup)$ and $\iota(\zeta_1)$ in $\widehat{\widetilde{F^{\times}}}/(\widetilde{F^{\times}}_+) D^2 $ respectively. Then:
$$b^2=\iota(\zeta_1)^2=(-\zeta_1,1)\in \widetilde{F^{\times}}_+;$$
$$ \iota(\zeta_1\varpiup)^2=(\begin{bmatrix} \varpiup^2 I& 0 \\ 0&   \zeta_1^2I\end{bmatrix}, 1)=(\begin{bmatrix} \varpiup \zeta_1^{-1} I& 0 \\ 0&  \zeta_1\varpiup^{-1} I\end{bmatrix}  (\zeta_1 \varpiup ),1)=(\begin{bmatrix} \varpiup \zeta_1^{-1} I& 0 \\ 0&  \zeta_1\varpiup^{-1} I\end{bmatrix}  ,1)( (- \zeta_1)(- \varpiup) ,1);$$
$$ a^4=\iota(\zeta_1\varpiup)^4 \in (\widetilde{F^{\times}}_+) D^2 ; $$
 $$\iota(\zeta_1\varpiup)^3= (\begin{bmatrix} \varpiup^2 I& 0 \\ 0&   \zeta_1^2I\end{bmatrix}, 1)(\begin{bmatrix} -\varpiup I& 0 \\ 0&   -\zeta_1I\end{bmatrix}, 1)$$
 $$=(\begin{bmatrix} -\varpiup^3 I& 0 \\ 0&   -\zeta_1^3I\end{bmatrix}, c'''(\zeta_1^2\varpiup^2,\zeta_1\varpiup))=(\begin{bmatrix} -\varpiup^3 I& 0 \\ 0&   -\zeta_1^3I\end{bmatrix},1)$$
 $$=(\begin{bmatrix} (\zeta_1^{-1}\varpiup)^2 I& 0 \\ 0&   (\zeta_1\varpiup^{-1})^{2} I\end{bmatrix} (\zeta_1 \varpiup )\begin{bmatrix}  -\zeta_1 I& 0 \\ 0&   -\varpiup I\end{bmatrix},1)$$
 $$=(\begin{bmatrix} (\zeta_1^{-1}\varpiup)^2 I& 0 \\ 0&   (\zeta_1\varpiup^{-1})^{2} I\end{bmatrix},1)(\zeta_1 \varpiup ,1)(\begin{bmatrix}  -\zeta_1 I& 0 \\ 0&   -\varpiup I\end{bmatrix},1);$$
 $$\iota(\zeta_1)\iota(\zeta_1\varpiup)=(\begin{bmatrix}  0&  \zeta_1 I \\ -\zeta_1 \varpiup I & 0  \end{bmatrix},1)=(\begin{bmatrix}  -\zeta_1 I& 0 \\ 0&   -\varpiup I\end{bmatrix},1)(\begin{bmatrix}  0&  -I \\ \zeta_1 I & 0  \end{bmatrix},1)=(\begin{bmatrix}  -\zeta_1 I& 0 \\ 0&   -\varpiup I\end{bmatrix},1)\iota(\zeta_1).$$
 Hence  the group $\widetilde{F^{\times}}/(\widetilde{F^{\times}}_+) D^2$ can be generated by $a, b$, and $a^4=1=b^2$, $ba=a^{-1}b$.
\end{proof}
Note that the center group $Z(D_4)$ of $D_4$ is just $\{ 1, a^2\}$.  Hence there exists an exact sequence:
$$1\longrightarrow Z(D_4) \longrightarrow D_4 \longrightarrow \Z_2\times \Z_2 \longrightarrow 1.$$
It corresponds to the following exact sequence:
$$1\longrightarrow D/D^2 \longrightarrow  \frac{\widehat{\widetilde{F^{\times}}}}{ (\widetilde{F^{\times}}_+) D^2} \longrightarrow  F^{\times}/F^{\times 2}\longrightarrow 1.$$

We remark that $D\subseteq M$. For any $y\in \widehat{\widetilde{F^{\times}}}$, $g\in \Sp(W)\subset \widetilde{\GSp}(W)$, let  us define $g^{y}=y^{-1}gy$. Under this conjugacy action, we obtain a semi-product group $\widehat{\widetilde{F^{\times}}} \ltimes \Sp(W)$, and an  exact sequence:
$$1 \longrightarrow \Delta_{D} \longrightarrow \widehat{\widetilde{F^{\times}}}\ltimes \Sp(W) \longrightarrow \widetilde{\GSp}(W) \longrightarrow 1,$$
where $\Delta_{D}=\{ (y, y^{-1})\mid y\in D\}$.
\subsubsection{ $8$-covering groups} We can transfer the action to $\Mp(W)$ and obtain a group $\widehat{\widetilde{F^{\times}}} \ltimes \Mp(W)$, resulting in an exact sequence:
$$ 1\longrightarrow \mu_8 \longrightarrow \widehat{\widetilde{F^{\times}}} \ltimes \Mp(W) \stackrel{\widetilde{p}}{\longrightarrow} \widehat{\widetilde{F^{\times}}} \ltimes \Sp(W) \longrightarrow 1.$$
Let $\widetilde{C}_M(-,-)$ denote the $2$-cocycle   associated to this exact sequence. For  $([y_1, g_1], \epsilon_1), ([y_2, g_2], \epsilon_2)\in \widehat{\widetilde{F^{\times}}}  \ltimes \Mp(W)$, $g_i\in \Sp(W)$, $y_i \in \widehat{\widetilde{F^{\times}}} $, $\epsilon_i\in \mu_8$,
\begin{equation}\label{ygmod41}
\begin{split}
(y_1, g_1, \epsilon_1)\cdot (y_2, g_2, \epsilon_2)&=(y_1y_2, [g_1, \epsilon_1]^{y_2}[g_2, \epsilon_2])\\
& =(y_1y_2, [g_1^{y_2}, \nu(y_2, g_1)\epsilon_1][g_2, \epsilon_2])\\
&=(y_1y_2, [g_1^{y_2}g_2, \nu(y_2, g_1)c_{PR, X^{\ast}}(g_1^{y_2}, g_2)\epsilon_1\epsilon_2]).
\end{split}
\end{equation}
Hence:
\begin{equation}\label{equationsqmod1}
\widetilde{C}_M([y_1,g_1], [y_2, g_2]) =\nu( y_2, g_1)c_{PR, X^{\ast}}(g_1^{y_2}, g_2),   \quad\quad [y_i, g_i]\in \widehat{\widetilde{F^{\times}}} \ltimes \Sp(W).
\end{equation}
\subsubsection{$\nu(-,-)$} Let us find out  the explicit expression of $\nu(-,-)$.
\begin{itemize}
 \item If $y\in D$,  by Example \ref{example1},  $\nu(y,g)=c_{PR, X^{\ast}}(y^{-1}, g y)c_{PR, X^{\ast}}(g,y)=1$.
 \item If $x=zy$, for some $z\in \widehat{\widetilde{F^{\times}}}$, and $y\in D$,  then  by Lemma \ref{twoau},   $\nu(x,g)=\nu(z,g)\nu(y,g^z)=\nu(z,g)$.
 \item If $x=yz=z(z^{-1}yz)$, for some $z\in \widehat{\widetilde{F^{\times}}}$, and $y\in D$,  then  by Lemma \ref{twoau},   $\nu(x,g)=\nu(z,g)\nu( z^{-1}yz,g^{z})=\nu(z,g)$.
 \item If $y\in \iota(\widetilde{F^{\times 2}})$,  then $g^y=g$,  so $\nu(y,-)$ is a character of $\Sp(W)$, and then $\nu(y,g)\equiv 1$.
 \item If $x=zy$,   for some $z\in \widehat{\widetilde{F^{\times}}}$, and $y\in  \iota(\widetilde{F^{\times 2}})$,  then  $\nu(x,g)=\nu(z,g)\nu(y,g^z)=\nu(z,g)$.
 \item If $x=yz$,   for some $z\in \widehat{\widetilde{F^{\times}}}$, and $y\in  \iota(\widetilde{F^{\times 2}})$,  then $g^{y}=g$,  and $\nu(x,g)=\nu(y,g)\nu(z,g^y)=\nu(z,g^y)=\nu(z,g)$.
 \item $\nu(-, -)$ is a function on $(\widetilde{F^{\times }}/\widetilde{F^{\times 2}})\times \Sp(W)\simeq (F^{\times }/F^{\times 2})\times \Sp(W)$.
 \end{itemize}
 Note that   $F^{\times}/F^{\times 2}$ can be represented by $\{ \dot{1}, \dot{\zeta_1}, \dot{\varpiup}, \dot{(\zeta_1\varpiup)}]\}$.
 \begin{lemma}\label{nu1mod41}
 $\nu(y,g)=\left\{ \begin{array}{ll}
(\det a_1a_2, \varpiup)_F \gamma(\varpiup,\psi^{\frac{1}{2}})^{-|S|}c_{PR, X^{\ast}}(\omega^{-1}, g^{x_1} \omega)c_{PR, X^{\ast}}(g,\omega)& \textrm{ if } y=\iota(\varpiup)=x_1\omega,\\
(\det a_1a_2, \zeta_1)_F \gamma(\zeta_1,\psi^{\frac{1}{2}})^{-|S|}c_{PR, X^{\ast}}(\omega^{-1}, g^{y_1} \omega)c_{PR, X^{\ast}}(g,\omega) &\textrm{ if } y=\iota(\zeta_1)=y_1\omega,\\
(\det a_1a_2, \zeta_1\varpiup)_F \gamma(\zeta_1\varpiup, \psi^{\frac{1}{2}})^{-|S|} &\textrm{ if } y=\iota(\zeta_1\varpiup )=\iota(\zeta_1)\iota(\varpiup),
\end{array}\right.$
 for $g=p_1\omega_S p_2$, $p_i=\begin{pmatrix}
a_i& b_i\\
0& d_i
\end{pmatrix}$, $i=1,2$.
\end{lemma}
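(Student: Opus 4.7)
The proof will parallel the argument for Lemma \ref{nu1} in the $|k_F|\equiv 3 \pmod{4}$ setting: for each of the three nontrivial classes in $F^{\times}/F^{\times 2}$ I decompose $\iota(y)$ into a product consisting of an element of $M$ (a ``diagonal'' similitude) and, where necessary, the Weyl element $\omega$, and then unwind $\nu$ using Lemma \ref{twoau} together with Examples \ref{example1} and \ref{example2}.

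Concretely, for $y=\iota(\varpiup)=x_1\omega$ with $x_1=\begin{bmatrix} I & 0\\ 0 & \varpiup I\end{bmatrix}\in M$, Lemma \ref{twoau} gives $\nu(y,g)=\nu(x_1,g)\,\nu(\omega,g^{x_1})$. Example \ref{example2} yields $\nu(x_1,g)=(\det a_1a_2,\varpiup)_F\,\gamma(\varpiup,\psi^{1/2})^{-|S|}$, while Example \ref{example1} applied to $h=\omega\in\Sp(W)$ gives $\nu(\omega,g^{x_1})=c_{PR,X^{\ast}}(\omega^{-1},g^{x_1}\omega)\,c_{PR,X^{\ast}}(g^{x_1},\omega)$. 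The case $y=\iota(\zeta_1)=y_1\omega$ with $y_1=\begin{bmatrix} I & 0\\ 0 & \zeta_1 I\end{bmatrix}$ is formally identical after replacing $\varpiup$ by $\zeta_1$. For $y=\iota(\zeta_1\varpiup)$, the defining formula for $\iota$ (and the computation already recorded in the proof of Lemma \ref{mod11}) shows $\iota(\zeta_1\varpiup)=\iota(\zeta_1)\iota(\varpiup)=\begin{bmatrix}-\varpiup I & 0\\ 0 & -\zeta_1 I\end{bmatrix}\in M$, a diagonal similitude with factor $\zeta_1\varpiup$, so the Weyl-element contribution disappears and the statement is read off directly from Example \ref{example2} (equivalently from Lemma \ref{gg0mod43}(1)) applied to this matrix.

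The main technical point requiring care is the matching of the stated right-hand side in cases 1 and 2: the direct computation above produces the factor $c_{PR,X^{\ast}}(g^{x_1},\omega)$, whereas the lemma states $c_{PR,X^{\ast}}(g,\omega)$. I expect to handle this by the same $M$-equivariance trick used in equations (\ref{PR1})--(\ref{PR4}) of Lemma \ref{gg0mod43}: splitting $g^{x_1}=x_1^{-1}(gx_1)$ inside the cocycle and using the cocycle identity together with $c_{PR,X^{\ast}}(x_1^{-1},\omega)=1$ (and the analogous vanishing against upper-triangular data) to absorb the $x_1$-conjugation. A secondary subtlety is that Example \ref{example2} is stated only for $\begin{pmatrix}1 & 0\\ 0 & y\end{pmatrix}$, but in case 3 the relevant matrix differs from a matrix of this form by the central scalar $-\varpiup\in F^{\times}$, which acts trivially by conjugation on $\Sp(W)$ and contributes only through its similitude class modulo squares; hence Example \ref{example2} applies verbatim.
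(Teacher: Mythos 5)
Your overall strategy coincides with the paper's: in each of the three cases you decompose $\iota(y)$ as an element of $M$ (or a central scalar) times $\omega$, and then unwind $\nu$ with Lemma \ref{twoau} together with Examples \ref{example1} and \ref{example2}. Your treatment of the third case --- pulling out the central scalar $-\varpiup$, using that $\nu$ of a central element is a character of $\Sp(W)$ and hence trivial, and then invoking invariance of $(\cdot,\cdot)_F$ and $\gamma(\cdot,\psi^{1/2})$ under multiplication by squares --- is a harmless variant of the paper's factorization $\iota(\zeta_1\varpiup)=z_1h$ with $z_1=\diag(I,\zeta_1\varpiup I)$ and $h=\diag(-\varpiup I,-\varpiup^{-1}I)\in M\cap\Sp(W)$, whose $c_{PR,X^{\ast}}$-contributions vanish; both routes give the stated answer in that case.

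The gap is in your proposed justification of $c_{PR,X^{\ast}}(g^{x_1},\omega)=c_{PR,X^{\ast}}(g,\omega)$ in cases 1 and 2. The manipulations (\ref{PR1})--(\ref{PR4}) rest entirely on the $2$-cocycle identity for $c_{PR,X^{\ast}}$ on $\Sp(W)$, so every entry fed into $c_{PR,X^{\ast}}$ must lie in $\Sp(W)$; your splitting $g^{x_1}=x_1^{-1}(gx_1)$ inserts the proper similitude $x_1=\diag(I,\varpiup I)$ into the cocycle, which is not legitimate. Nor can the equality be reached by any such formal rearrangement: conjugation by $x_1$ preserves the Lagrangians $X$ and $X^{\ast}$ but rescales the Leray invariant $q(g,\omega)=q(X^{\ast},X,X^{\ast}g)$ by the similitude factor $\varpiup^{-1}$. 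Already for $\dim W=2$ the paper's explicit formula gives $c_{PR,X^{\ast}}(g^{x_1},\omega)=\gamma_{\psi}(\tfrac{1}{2}cd\varpiup^{-1})$ versus $c_{PR,X^{\ast}}(g,\omega)=\gamma_{\psi}(\tfrac{1}{2}cd)$, and these differ in general. You have in fact isolated exactly the step that the paper's own proof does not justify either (it is dismissed there with a bare ``Note that''); but the mechanism you propose will not close it, and the honest output of the computation is the factor $c_{PR,X^{\ast}}(g^{x_1},\omega)$ rather than $c_{PR,X^{\ast}}(g,\omega)$.
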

\begin{proof}
 If $y=\iota(\varpiup)$, let us write $y=x_1\omega$, for $x_1= \begin{bmatrix}
1& \\
& \varpiup\end{bmatrix}$. By Examples \ref{example1},\ref{example2} and Lemma \ref{twoau},
 $$\nu(y,g)=\nu(x_1,g)\nu(\omega, g^{x_1})=(\det a_1a_2, \varpiup)_F \gamma(\varpiup, \psi^{\frac{1}{2}})^{-|S|}c_{PR, X^{\ast}}(\omega^{-1}, g^{x_1} \omega)c_{PR, X^{\ast}}(g^{x_1},\omega).$$ Note that $c_{PR, X^{\ast}}(g^{x_1},\omega)=c_{PR, X^{\ast}}(g,\omega)$. The expression  for  $\iota(\zeta_1) $ is similar. If  $y=\iota(\zeta_1\varpiup)$, then $$y=\begin{bmatrix}
-\varpiup& \\
& -\zeta_1\end{bmatrix}=z_1h, \quad \quad \textrm{ for } z_1=\begin{bmatrix}
1& \\
& \zeta_1\varpiup\end{bmatrix}, h=\begin{bmatrix}
-\varpiup& \\
& -\varpiup^{-1}\end{bmatrix}.$$ By Examples \ref{example1},\ref{example2},
\begin{align*}
&\nu(y,g)=\nu(z_1,g)\nu(h, g^{z_1})\\
&=(\det a_1a_2, \zeta_1\varpiup)_F \gamma(\zeta_1\varpiup, \psi^{\frac{1}{2}})^{-|S|}c_{PR, X^{\ast}}(h^{-1}, g^{z_1}h)c_{PR, X^{\ast}}(g,h)\\
&=(\det a_1a_2, \zeta_1\varpiup)_F \gamma(\zeta_1\varpiup, \psi^{\frac{1}{2}})^{-|S|}.
\end{align*}
\end{proof}

\begin{lemma}
$\nu(\widetilde{y},\omega_S)=\gamma(\lambda_{y},\psi^{\frac{1}{2}})^{-|S|}$, for $\widetilde{y}=(y, \epsilon)\in \widehat{\widetilde{F^{\times}}}\subseteq \widetilde{\GSp}(W)$.
\end{lemma}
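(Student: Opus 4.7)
The plan is to exploit the fact, established in the bulleted discussion just before Lemma~\ref{nu1mod41}, that $\nu(-,-)$ descends to a function on $(F^\times/F^{\times 2})\times \Sp(W)$, so it suffices to verify the identity for $y$ ranging over the section $\iota$ of the four representatives $\dot{1},\dot{\zeta_1},\dot{\varpiup},\dot{(\zeta_1\varpiup)}$ of $F^\times/F^{\times 2}$. Throughout I set $g=\omega_S$, in which case the Bruhat factorisation $g=p_1\omega_S p_2$ has $p_1=p_2=I$, so $\det a_1a_2=1$ and every Hilbert symbol of the form $(\det a_1a_2,\lambda_y)_F$ appearing in Lemma~\ref{nu1mod41} is trivial. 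In addition I will use that the Weil index $\gamma(\cdot,\psi^{1/2})$ factors through $F^\times/F^{\times 2}$, hence $\gamma(\lambda_y,\psi^{1/2})$ depends only on $\dot{\lambda_y}$.

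First, when $\dot{y}=\dot{1}$, the element $y$ lies in $\iota(\widetilde{F^{\times 2}})$, which acts trivially on $\Sp(W)$ by conjugation. By the bulleted observation, $\nu(y,\omega_S)=1$; on the other hand $\gamma(\lambda_y,\psi^{1/2})=\gamma(1,\psi^{1/2})=1$, so both sides agree. Next, for $\dot{y}=\dot{(\zeta_1\varpiup)}$ the third case of Lemma~\ref{nu1mod41} gives directly $\nu(y,\omega_S)=\gamma(\zeta_1\varpiup,\psi^{1/2})^{-|S|}$, and since $\lambda_y=\zeta_1\varpiup$ (up to squares) the claim is immediate.

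The two remaining cases $\dot{y}=\dot{\varpiup}$ and $\dot{y}=\dot{\zeta_1}$ are parallel and use the first two formulas of Lemma~\ref{nu1mod41}. For $y=\iota(\varpiup)=x_1\omega$ with $x_1=\mathrm{diag}(I,\varpiup I)$, that lemma yields
\[
\nu(\iota(\varpiup),\omega_S)=\gamma(\varpiup,\psi^{1/2})^{-|S|}\,c_{PR,X^{\ast}}(\omega^{-1},\omega_S^{x_1}\omega)\,c_{PR,X^{\ast}}(\omega_S,\omega).
\]
The task is therefore to prove that the product of the two Perrin--Rao cocycle values is $1$. I will do this by explicit Leray-invariant calculation: $\omega_S^{x_1}=x_1^{-1}\omega_S x_1$ acts on the symplectic basis by $e_i\mapsto -\varpiup^{-1}e_i^{\ast}$, $e_i^{\ast}\mapsto \varpiup e_i$ for $i\in S$ and trivially otherwise, so both $q(\omega_S,\omega)$ and $q(\omega^{-1},\omega_S^{x_1}\omega)$ can be read off from the flag positions of $X^{\ast}$, $X^{\ast}\omega^{-1}$ and $X^{\ast}(\omega_S^{x_1}\omega)^{-1}$; these Leray forms are degenerate (zero-dimensional) because the relevant triple of Lagrangians is coherent under the scaling by $\varpiup$. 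Consequently each Perrin--Rao value is $\gamma_\psi(0)=1$ and the product is $1$. The case $y=\iota(\zeta_1)=y_1\omega$ is identical with $\zeta_1$ in place of $\varpiup$.

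The routine step is the reduction to the four representatives and the trivialisation of the Hilbert symbol factors; the one genuine piece of work is the Leray-invariant computation for $c_{PR,X^{\ast}}(\omega^{-1},\omega_S^{x_1}\omega)\,c_{PR,X^{\ast}}(\omega_S,\omega)$, and this will be the main (though still elementary) obstacle. Exactly the same cancellation occurred in the analogous lemma for the case $|k_F|\equiv 3\pmod 4$ in Section~\ref{case3mod42}, and the argument here is a direct adaptation of it.
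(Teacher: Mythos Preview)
Your approach is exactly the paper's: reduce to the four square-class representatives using that $\nu$ descends to $F^\times/F^{\times 2}$, invoke Lemma~\ref{nu1mod41} with $p_1=p_2=I$, and check that the two Perrin--Rao cocycle factors are each equal to $1$. The paper's proof is in fact more terse than yours --- it simply writes the equality $c_{PR,X^\ast}(\omega^{-1},\omega_S^{x_1}\omega)\,c_{PR,X^\ast}(\omega_S,\omega)=1$ without further comment --- so your added detail is helpful.

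One remark on the justification you give for the vanishing: the phrase ``coherent under the scaling by $\varpiup$'' is not the operative mechanism and is a bit misleading. The cleaner (and $\varpiup$-free) reason is that the Leray invariant decomposes over the blocks $W_i=\operatorname{span}(e_i,e_i^\ast)$, and in each block two of the three Lagrangian lines coincide. Concretely, for $q(\omega_S,\omega)=q(X^\ast,X,X^\ast\omega_S)$ one gets $(X_i^\ast,X_i,X_i)$ for $i\in S$ and $(X_i^\ast,X_i,X_i^\ast)$ for $i\notin S$; for $q(\omega^{-1},\omega_S^{x_1}\omega)$ one gets $(X_i^\ast,X_i^\ast,X_i)$ for $i\in S$ and $(X_i^\ast,X_i,X_i)$ for $i\notin S$. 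In every case two entries agree, so each block contributes a zero-dimensional form and the Weil index is $1$. The same computation works verbatim for $\iota(\zeta_1)$ with $\zeta_1$ in place of $\varpiup$. With this sharpening, your argument is complete.
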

\begin{proof}
If $\dot{y}=\dot{1} \in F^{\times}/F^{\times2}$, the result holds. If $y=\iota(\varpiup)=x_1\omega$,  $$\nu(y,\omega_S)= \gamma(\varpiup,\psi^{\frac{1}{2}})^{-|S|}c_{PR, X^{\ast}}(\omega^{-1}, \omega_S^{x_1} \omega)c_{PR, X^{\ast}}(\omega_S,\omega)=\gamma(\varpiup,\psi^{\frac{1}{2}})^{-|S|}.$$
It is similar for the other elements.
\end{proof}
\begin{lemma}\label{gg0mod41}
For $g_0=\begin{pmatrix}
a& 0\\
0& (a^{\ast})^{-1}
\end{pmatrix}\in M$, $g\in \Sp(W)$, $\widetilde{y}=(y, \epsilon)\in \widehat{\widetilde{F^{\times}}}$, $\lambda_y\in F^{\times}$,
\begin{itemize}
\item[(1)] $\nu(y, g_0)=(\det a,\lambda_y)_F$,
\item[(2)] $\nu(y, g_0g)=\nu(y, g_0) \nu(y, g)$,
\item[(3)] $\nu(y, gg_0)= \nu(y, g)\nu(y, g_0)$.
\end{itemize}
\end{lemma}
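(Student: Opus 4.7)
The strategy mirrors that of Lemma \ref{gg0mod43}, treating separately the three non-trivial classes $\dot{\zeta_1}$, $\dot{\varpiup}$, $\dot{(\zeta_1\varpiup)}$ of $F^{\times}/F^{\times 2}$ by means of the explicit formulas in Lemma \ref{nu1mod41}. Throughout, the key input is that $M \subseteq P$ stabilizes the Lagrangian $X^{\ast}$, so cocycle factors $c_{PR, X^{\ast}}(-,-)$ whose Leray triple contains two coinciding Lagrangians are automatically trivial.

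For part (1), I would write $g_0 = I \cdot \omega_{\emptyset} \cdot g_0$ so that in the Bruhat decomposition one has $|S|=0$ and $\det a_1 a_2 = \det a$. The Weil-index factor $\gamma(\lambda_y, \psi^{1/2})^{-|S|}$ then collapses to $1$. It remains to verify that each Perrin-Rao cocycle appearing in the formulas for $\nu(\iota(\varpiup), g_0)$ and $\nu(\iota(\zeta_1), g_0)$ equals $1$; this follows because $g_0$ and its conjugate $g_0^{x_1}$ (respectively $g_0^{y_1}$) remain in $M \subseteq P$, and a direct inspection of the triples $(X^{\ast}, X^{\ast}\omega, X^{\ast}g_0)$ and $(X^{\ast}, X(g_0^{x_1})^{-1}, X)$ shows that two of the three Lagrangians always coincide. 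This yields $\nu(y,g_0)=(\det a, \lambda_y)_F$ uniformly, including the already cocycle-free formula for $y = \iota(\zeta_1 \varpiup)$.

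For part (2), I would apply Lemma \ref{nu1mod41} with $g$ replaced by $g_0 g$. Writing $g = p_1 \omega_S p_2$, the product $g_0 g = (g_0 p_1) \omega_S p_2$ again lies in the cell $C_{|S|}$, and the determinantal coefficient picks up exactly a factor $\det a$. Hence the combination of determinant symbol and Weil-index factor already splits as $\nu(y, g_0) \nu(y, g)$. For $y \in \{\iota(\varpiup), \iota(\zeta_1)\}$ the remaining identities
\[
c_{PR, X^{\ast}}(\omega^{-1}, (g_0 g)^{x_1}\omega) = c_{PR, X^{\ast}}(\omega^{-1}, g^{x_1}\omega), \quad c_{PR, X^{\ast}}(g_0 g, \omega) = c_{PR, X^{\ast}}(g, \omega)
\]
are obtained by the cocycle manipulations (PR1)-(PR2) from Lemma \ref{gg0mod43}; those manipulations use only that $g_0^{x_1} \in M$ together with standard cocycle relations, and therefore transfer to the present setting without modification. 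The case $y = \iota(\zeta_1 \varpiup)$ is immediate since its formula involves no Perrin-Rao factor at all.

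Part (3) is entirely analogous, invoking (PR3)-(PR4) of Lemma \ref{gg0mod43} in place of (PR1)-(PR2), together with the parallel observation that $p_2 g_0 \in P$ leaves the cell unchanged and contributes exactly an extra $\det a$. The principal difficulty is organizational rather than mathematical: one must keep track of three distinct cases in Lemma \ref{nu1mod41} and confirm that each intermediate cocycle reduction depends only on the conjugating element lying in $M$, not on the specific choices of $x_1$ or $y_1$. Once this bookkeeping is carried out, no new ingredients beyond those already established for Lemma \ref{gg0mod43} are required.
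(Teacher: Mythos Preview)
Your proposal is correct and follows essentially the same approach as the paper: both reduce to the explicit formulas of Lemma~\ref{nu1mod41} and then defer the Perrin--Rao cocycle identities to the computations (\ref{PR1})--(\ref{PR4}) already carried out in Lemma~\ref{gg0mod43}. The paper's proof is terser---it simply says ``use a similar proof as in Lemma~\ref{gg0mod43}'' for the cases $y=\iota(\varpiup)$, $\iota(\zeta_1)$ and notes that $y=\iota(\zeta_1\varpiup)$ is immediate---whereas you spell out explicitly which cocycle factors need to vanish and why; but the underlying argument is identical.
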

\begin{proof}
1) By the formulas in Lemma \ref{nu1mod41}, the result holds.  \\
2) Let us write $g=p_1\omega_S p_2$.   Then $g_0g=g_0p_1\omega_S p_2$. So the result is also right for $y=\iota(\zeta_1\varpiup)$.   For $y=\iota(\varpiup)$, or $\iota(\zeta_1)$,
we can use a similar proof as in Lemma \ref{gg0mod43}.\\
3) The proof is similar to that of Lemma \ref{gg0mod43}(3). 
\end{proof}

\subsubsection{$\widetilde{\GMp}(W)$}
Recall the composite map $\lambda: \widetilde{\GSp}(W) \stackrel{\widetilde{\lambda}}{\longrightarrow } \widetilde{F^{\times}} \longrightarrow F^{\times}$.
\begin{lemma}\label{coco1204}
For $[y_1, g_1]\in \Delta_D$, with $g_1=y_1^{-1}= \begin{bmatrix}
t& \\
& t^{-1}\end{bmatrix}$,  $[y_2, g_2]\in \widehat{\widetilde{F^{\times}}} \ltimes \Sp(W)$, we have:
$$\widetilde{C}_M([y_1,g_1], [y_2, g_2]) =( t^m, \lambda_{y_2})_F, \textrm{ and } \widetilde{C}_M([y_2,g_2], [y_1, g_1]) = 1.$$
\end{lemma}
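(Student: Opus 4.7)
The plan is to follow exactly the strategy used in the proof of Lemma~\ref{cocomod41} from the $|k_F|\equiv 3\pmod 4$ case, since the cocycle formula (\ref{equationsqmod1}) has the same shape:
\[
\widetilde{C}_M([y_1,g_1],[y_2,g_2]) \;=\; \nu(y_2,g_1)\, c_{PR,X^{\ast}}(g_1^{y_2}, g_2).
\]
The whole computation should reduce to two structural observations---that $g_1$ lies in the Siegel Levi $M$, and that conjugation by any $y_2 \in \widehat{\widetilde{F^{\times}}}$ preserves $M$---after which each of the two factors evaluates on the nose.

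For the first identity, I would first confirm that $g_1^{y_2}$ remains in $P$. Since $g_1 = \operatorname{diag}(tI_m, t^{-1}I_m) \in M$, it suffices to check the claim on generators of $\widehat{\widetilde{F^{\times}}}$: the diagonal generators act trivially by conjugation (they are central in $\widetilde{\GSp}(W)$), while the Weyl-type generators $\iota(\varpiup)$ and $\iota(\zeta_1)$ swap the two diagonal blocks of $g_1$, still producing a diagonal matrix. Consequently $X^{\ast} g_1^{y_2} = X^{\ast}$, the Leray invariant $q(g_1^{y_2}, g_2) = q(X^{\ast}, X^{\ast} g_2^{-1}, X^{\ast})$ is degenerate and vanishes, and so $c_{PR,X^{\ast}}(g_1^{y_2}, g_2) = 1$. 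The remaining factor $\nu(y_2, g_1)$ is then given directly by Lemma~\ref{gg0mod41}(1) applied with $a = tI_m$, yielding $(\det a, \lambda_{y_2})_F = (t^m, \lambda_{y_2})_F$, which is the claimed value.

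For the second identity, both factors in $\nu(y_1,g_2)\, c_{PR,X^{\ast}}(g_2^{y_1}, g_1)$ should collapse to $1$: the first by the bullet points preceding Lemma~\ref{nu1mod41}, which show that $\nu(y,g)=1$ whenever $y \in D$ (via Example~\ref{example1}); the second because $g_1 \in M \subseteq P$, so $X^{\ast} g_1 = X^{\ast}$ kills the Leray invariant regardless of $g_2^{y_1}$. The only point where one needs to be slightly careful---and the mildest obstacle I would anticipate---is keeping track of the new form of $D$ in the $|k_F|\equiv 1\pmod 4$ setting, namely the subgroup $\langle \operatorname{diag}(\varpiup^{-1}\zeta_1 I,\varpiup\zeta_1^{-1}I)\rangle$ from Lemma~\ref{mod11}, rather than $\mu_2$ as before; but since every argument only uses that $D \subseteq M$ and that $\widehat{\widetilde{F^{\times}}}$ acts by conjugation preserving $M$, no genuinely new difficulty arises compared to the mod-$3$ case, and the proof should essentially transcribe verbatim.
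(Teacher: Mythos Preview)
Your proposal is correct and follows essentially the same approach as the paper's own proof: both invoke the cocycle formula (\ref{equationsqmod1}), then kill the Perrin--Rao factor using that $g_1$ (or $g_1^{y_2}$) lies in $P$, and evaluate the $\nu$ factor via Lemma~\ref{gg0mod41}(1) for the first identity and via the bullet-point observation that $\nu(y,\,\cdot\,)\equiv 1$ for $y\in D$ for the second. Your write-up is simply more explicit about why each piece vanishes (checking on generators that conjugation by $\widehat{\widetilde{F^{\times}}}$ preserves $M$, spelling out the Leray invariant), whereas the paper records only the two one-line computations.
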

\begin{proof}
By (\ref{equationsqmod1}), $$\widetilde{C}_M([y_1,g_1], [y_2, g_2])=\nu( y_2, g_1)c_{PR, X^{\ast}}(g_1^{y_2}, g_2)=\nu(y_2, g_1)=( t^m, \lambda_{y_2})_F,$$
 $$\widetilde{C}_M([y_2,g_2], [y_1, g_1])=\nu( y_1, g_2   )c_{PR, X^{\ast}}(g_2^{y_1}, g_1)=\nu( y_1, g_2   )=1.$$
\end{proof}
\begin{corollary}
$\widetilde{C}_M([y_2,g_2], [y_1, g_1]) = 1$, for $[y_i, g_i]\in \Delta_D$.
\end{corollary}
Hence, there is an exact sequence:
\begin{align}\label{mu8mod41}
1 \longrightarrow \mu_8\times \Delta_D \longrightarrow \widehat{\widetilde{F^{\times}}} \ltimes \Mp(W) \stackrel{p_1}{\longrightarrow} \widetilde{\GSp}(W) \longrightarrow 1.
\end{align}

\begin{lemma}\label{zzz2}
Let $Z$ denote the center group  of $\widehat{\widetilde{F^{\times}}} \ltimes \Mp(W)$. Then:
  $Z=[\widetilde{F^{\times}}_+ \times \{\pm 1\} ]\times \mu_8$.
\end{lemma}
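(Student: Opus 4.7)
The plan is to mirror the strategy of Lemma \ref{cent3mdulo4}, taking advantage of the fact that when $|k_F|\equiv 1\pmod 4$, the element $-1$ is a square in $F^{\times}$, so $(-1,\cdot)_F\equiv 1$ and the parity restriction on $m$ from the previous case disappears. Because $\widehat{\widetilde{F^{\times}}}\ltimes\Mp(W)$ is a $\mu_8$-central extension of $\widehat{\widetilde{F^{\times}}}\ltimes\Sp(W)$, the $\mu_8$ factor is automatically in $Z$, and it suffices to determine which images $[y_1,g_1]$ lift to central elements; this amounts to group-theoretic commutation plus the symmetry $\widetilde{C}_M([y_1,g_1],[y_2,g_2])=\widetilde{C}_M([y_2,g_2],[y_1,g_1])$ for every $[y_2,g_2]$.

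For the inclusion $\supseteq$, I would verify each factor separately. If $y_1\in\widetilde{F^{\times}}_+=\iota(\widetilde{F^{\times 2}})$, Lemma \ref{ste} places $y_1$ in the centre of $\widetilde{\GSp}(W)$, so it commutes with every $y_2$ and acts trivially on $\Sp(W)$; the bullet list preceding Lemma \ref{nu1mod41} gives $\nu(y_1,g)\equiv 1$, so the formula (\ref{equationsqmod1}) collapses both cocycle values to $1$. For $g_1=-I$, the element is central in $\Sp(W)$ and scalar in $\widetilde{\GSp}(W)$, so all group-theoretic commutations are automatic; since $-I\in P(X^{\ast})$ stabilises $X^{\ast}$, the Leray invariants $q(-I,g_2)$ and $q(g_2,-I)$ vanish, giving $c_{PR,X^{\ast}}(-I,g_2)=c_{PR,X^{\ast}}(g_2,-I)=1$; Lemma \ref{gg0mod41}(1) then yields $\nu(y_2,-I)=((-1)^m,\lambda_{y_2})_F=1$ for all $y_2$ by the residue-field hypothesis. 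Combining these steps produces $[\widetilde{F^{\times}}_+\times\{\pm I\}]\times\mu_8\subseteq Z$.

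For the inclusion $\subseteq$, suppose the image $[y_1,g_1]$ is central. Taking $y_2=1$ in the commutation relation forces $g_1 g_2=g_2^{y_1}g_1$ for all $g_2\in\Sp(W)$, i.e., $y_1g_1^{-1}$ centralises $\Sp(W)$ in $\widetilde{\GSp}(W)$; combined with the constraint $g_1\in Z(\Sp(W))=\{\pm I\}$ coming from taking $y_2=1$ and comparing actions, $y_1$ lies in the centraliser of $\Sp(W)$, namely $\widetilde{F^{\times}}^c$. Specialising the cocycle symmetry to $[y_2,g_2]=[1,\omega_S]$ and using the formula $\nu(y_1,\omega_S)=\gamma(\lambda_{y_1},\psi^{1/2})^{-|S|}$ (the lemma immediately following Lemma \ref{nu1mod41}), we reduce to the identity $\gamma(\lambda_{y_1},\psi^{1/2})^{-|S|}=1$; taking $|S|=1$ forces $\lambda_{y_1}\in F^{\times 2}$, so via the exact sequence developed in Section \ref{wideparen} we obtain $y_1\in\widetilde{F^{\times}}_+\cdot D$. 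Finally, the explicit matrix computation $\iota(\varpiup)^{-1}d\,\iota(\varpiup)=d^{-1}$ for the generator $d$ of the infinite cyclic group $D$ shows $D\cap Z(\widehat{\widetilde{F^{\times}}})=\{1\}$, so $y_1\in\widetilde{F^{\times}}_+$ as required.

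The principal obstacle is the reverse inclusion: one has to juggle the non-abelian structure of $\widehat{\widetilde{F^{\times}}}$ described in Lemma \ref{shex}, the centralisers in $\widetilde{\GSp}(W)$, and the subtle fact that membership in $Z(\widehat{\widetilde{F^{\times}}})$ is strictly weaker than centrality in the full semidirect product with $\Mp(W)$---the extra Weil/Heisenberg cocycle data, specifically the non-triviality of the Weil index $\gamma(\lambda_y,\psi^{1/2})$ on non-square classes, supplies the additional constraint that forces $\lambda_{y_1}\in F^{\times 2}$ and thereby pins down $\widetilde{F^{\times}}_+$ as the exact answer.
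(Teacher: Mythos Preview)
Your forward inclusion $\supseteq$ matches the paper's argument exactly: compute both cocycle values for $[y_1,g_1]\in\widetilde{F^{\times}}_+\times\{\pm I\}$ and observe they equal $1$, using $(-1,\cdot)_F\equiv 1$ since $-1$ is a square when $|k_F|\equiv 1\pmod 4$. The paper in fact stops there; the phrase ``the other possible central element'' is an assertion, not a proof, so for the reverse inclusion you are attempting more than the paper spells out.

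Your reverse-inclusion argument, however, has a logical ordering problem. You assert $g_1\in Z(\Sp(W))=\{\pm I\}$ as coming from ``taking $y_2=1$ and comparing actions,'' but that substitution only shows $y_1g_1$ (not $y_1g_1^{-1}$, incidentally) centralises $\Sp(W)$; it does not by itself force $g_1$ to be scalar unless you already know $y_1$ acts trivially. Your subsequent Weil-index step then tacitly relies on $g_1=\pm I$ in order to collapse $c_{PR,X^{\ast}}(g_1,\omega_S)$ to $1$, so the argument as written is circular.

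The fix is to reorder and to drop the Weil index entirely. From $y_1y_2=y_2y_1$ for all $y_2$ you get $y_1\in Z(\widehat{\widetilde{F^{\times}}})$. By Lemma~\ref{shex} the quotient $\widehat{\widetilde{F^{\times}}}/[\widetilde{F^{\times}}_+D^2]\simeq D_4$ has centre of order $2$ with preimage $\widetilde{F^{\times}}_+D$, so $y_1\in\widetilde{F^{\times}}_+D$; your conjugation computation $\iota(\varpiup)^{-1}d\,\iota(\varpiup)=d^{-1}$ then kills the $D$-component, yielding $y_1\in\widetilde{F^{\times}}_+$. Now $y_1$ acts trivially on $\Sp(W)$, so the relation $g_1g_2=g_2^{y_1}g_1$ becomes $g_1g_2=g_2g_1$, whence $g_1\in\{\pm I\}$. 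The Weil-index constraint $\gamma(\lambda_{y_1},\psi^{1/2})=1$ you highlight is redundant: $\lambda_{y_1}\in F^{\times 2}$ is already forced by the purely group-theoretic fact $y_1\in Z(\widehat{\widetilde{F^{\times}}})=\widetilde{F^{\times}}_+$.
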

\begin{proof}
For any $[y_1, g_1]\in \widetilde{F^{\times}}_+ \times 1$, $[y_2, g_2]\in \widehat{\widetilde{F^{\times}}} \ltimes \Sp(W)$,
 $$\widetilde{C}_M([y_1,g_1], [y_2, g_2])=\nu( y_2, g_1)c_{PR, X^{\ast}}(g_1^{y_2}, g_2)=\nu( y_2, g_1)c_{PR, X^{\ast}}(g_1, g_2)=1;$$
$$\widetilde{C}_M([y_2,g_2], [y_1, g_1])=\nu( y_1, g_2)c_{PR, X^{\ast}}(g_2^{y_1}, g_1)=\nu( y_1, g_2)=1.$$
For  the other possible  central element $[y_1, -1]$, $y_1\in \widetilde{F^{\times}}_+$,
$$\widetilde{C}_M([y_2,g_2], [y_1, -1])=1, \quad \widetilde{C}_M([y_1,-1], [y_2, g_2])=(\lambda_{y_2}, (-1)^m)_F=1.$$
So the result  holds.
\end{proof}
Note that the above exact sequence (\ref{mu8mod41}) means that $\widetilde{F^{\times}} \ltimes \Mp(W)$ is a \emph{ normal } extension of $\GSp(W)$ by $\mu_8\times \Delta_D$. So let us modify the $2$-cocycle $\widetilde{C}_M$. By identifying $D$ with $\Delta_D$, we view $ \widetilde{C}_M(-, -)$ as a function of  $\widetilde{F^{\times}}_+D\times \widehat{\widetilde{F^{\times}}}  $ and $\widehat{\widetilde{F^{\times}}}  \times \widetilde{F^{\times}}_+ D$, denoted by $\widetilde{C}'_M(-, -)$.
 \begin{lemma}
 \begin{itemize}
 \item[(1)] If $2\mid m$, $\widetilde{C}'_M|_{[D\widetilde{F^{\times}}_+]  \times \widehat{\widetilde{F^{\times}}} }=1=\widetilde{C}'_M|_{  \widehat{\widetilde{F^{\times}}}\times [D\widetilde{F^{\times}}_+ ]  }$.
 \item[(2)] If $2\nmid m$, $\widetilde{C}'_M|_{[ D^2\widetilde{F^{\times}}_+]\times  \widehat{\widetilde{F^{\times}}} }=1=\widetilde{C}'_M|_{ \widehat{\widetilde{F^{\times}}} \times [D^2\widetilde{F^{\times}}_+]  }$.
 \end{itemize}
 \end{lemma}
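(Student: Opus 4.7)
The plan is to evaluate each restriction directly by applying formula (\ref{equationsqmod1}) together with the identification $D\simeq\Delta_D$, $d\mapsto[d,d^{-1}]$, under which a general element $yd\in\widetilde{F^{\times}}_+D$ corresponds to $[yd,d^{-1}]\in\widehat{\widetilde{F^{\times}}}\ltimes\Sp(W)$. Three ingredients will make the computation collapse: first, $D\subseteq M$ (stated just after Lemma~\ref{mod11}); second, $c_{PR,X^\ast}$ vanishes as soon as either of its arguments lies in $P$, since then the Leray invariant degenerates (two coincident Lagrangians) and the Weil index of the zero quadratic form is $1$; third, the bullet list preceding Lemma~\ref{nu1mod41} gives $\nu(y,\cdot)\equiv1$ for $y\in\iota(\widetilde{F^{\times 2}})=\widetilde{F^{\times}}_+$ and $\nu(d,\cdot)\equiv1$ for $d\in D$.

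For the left restriction I would compute
$$\widetilde{C}'_M(yd,[y_2,g_2])=\nu(y_2,d^{-1})\,c_{PR,X^\ast}\bigl((d^{-1})^{y_2},g_2\bigr).$$
The Perrin--Rao factor equals $1$ because conjugation by each generator of $\widehat{\widetilde{F^{\times}}}$ (scalars $\iota(u)$ for $u\in U_1$, or the anti-diagonal blocks $\iota(\varpiup),\iota(\zeta_1)$) stabilises $M$, hence $(d^{-1})^{y_2}\in M\subseteq P$. Writing $d=\delta^k$ with $\delta$ the generator from Lemma~\ref{mod11}, Lemma~\ref{gg0mod41}(1) gives $\nu(y_2,d^{-1})=((\varpiup\zeta_1^{-1})^{km},\lambda_{y_2})_F$. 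If $2\mid m$, the exponent $km$ is even, so this symbol is unconditionally $1$, proving the left half of (1); if $2\nmid m$, the symbol is $1$ precisely when $k$ is even, i.e.\ $d\in D^2$, proving the left half of (2).

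For the right restriction I would compute
$$\widetilde{C}'_M([y_2,g_2],yd)=\nu(yd,g_2)\,c_{PR,X^\ast}\bigl(g_2^{yd},d^{-1}\bigr).$$
The Perrin--Rao factor is $1$ since $d^{-1}\in P$, and for the $\nu$-factor, centrality of $y\in\widetilde{F^{\times}}_+$ gives $g_2^y=g_2$, so Lemma~\ref{twoau} combined with the third ingredient yields $\nu(yd,g_2)=\nu(y,g_2)\nu(d,g_2^y)=1\cdot 1=1$. Thus the right restriction is identically $1$ on all of $\widetilde{F^{\times}}_+D$, which contains both subdomains appearing in (1) and (2).

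The one point requiring mild care is verifying that $M$ is stable under conjugation by each generator of $\widehat{\widetilde{F^{\times}}}$; this is a direct block calculation using the explicit matrices in (\ref{tttt1})--(\ref{tttt3}), the scalar generators acting trivially while the Weyl-type generators merely swap the two diagonal blocks. I do not anticipate a genuine obstacle beyond this bookkeeping.
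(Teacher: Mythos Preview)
Your proposal is correct and follows essentially the same approach as the paper. The paper's proof is a one-line citation of Lemmas~\ref{coco1204} and~\ref{zzz2}; your argument unpacks those same computations directly for the combined element $yd$ (with $y\in\widetilde{F^{\times}}_+$, $d\in D$), using the same ingredients---Lemma~\ref{gg0mod41}(1) for $\nu(y_2,d^{-1})$, the vanishing of $c_{PR,X^\ast}$ on $P$, and the bullet-list identities $\nu(y,\cdot)\equiv1$ for $y\in\iota(\widetilde{F^{\times2}})$ and $\nu(d,\cdot)\equiv1$ for $d\in D$---so there is no substantive difference.
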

 \begin{proof}
 It follows from the above lemmas \ref{coco1204}, \ref{zzz2}.
 \end{proof}
 Next, we will extend $\widetilde{C}'_M$ to be a cocycle on $\widehat{\widetilde{F^{\times}}} $, denoted by $$c_{\widehat{\widetilde{F^{\times}}}}(-,-).$$
 \begin{itemize}
\item If  $2\textbar m$, we can let  $c_{\widehat{\widetilde{F^{\times}}}}(x, y) \equiv 1$. 
\item If  $2\nmid m$, following Lemma \ref{shex}, we can view $\widetilde{C}'_M$ as a function on $\langle a^2 \rangle \times D_4$ and  $ D_4\times \langle a^2 \rangle$ such that 
$$\left\{ \begin{array}{l}
\widetilde{C}'_M(a^2, 1)=\widetilde{C}'_M(a^2, a)=\widetilde{C}'_M(a^2, a^2)=\widetilde{C}'_M(a^2, a^3)=1,\\
\widetilde{C}'_M(a^2, b)=\widetilde{C}'_M(a^2, ab)=\widetilde{C}'_M(a^2, a^2b)=\widetilde{C}'_M(a^2, a^3b)=-1,\\
\widetilde{C}'_M(y,a^2)=1, \textrm{ for any } y\in  D_4.
\end{array}\right.$$
\begin{lemma}\label{CM111}
$\widetilde{C}'_M$ cannot be extended to yield a cocycle on  $D_4$ with values in  $\mu_2$.
\end{lemma}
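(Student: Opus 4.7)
The plan is to assume for contradiction that some $2$-cocycle $c\colon D_4\times D_4\to\mu_2$ extends $\widetilde{C}'_M$, and to derive an inconsistency by computing the single value $c(a^3,a^3)$ in two incompatible ways. Since $c(a^2,1)=1$, the cocycle identity forces $c(1,1)=1$, so $c$ is normalized; let $1\to\mu_2\to G\to D_4\to 1$ be the associated central extension with chosen lifts $\widetilde{x}\in G$ satisfying $\widetilde{x}\widetilde{y}=c(x,y)\widetilde{xy}$ and $\widetilde{1}=1$.

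First, using only the cocycle identity on the cyclic subgroup $\langle a\rangle$ and the given values $c(a^2,a)=c(a,a^2)=c(a^2,a^2)=1$, I would show $c(a^3,a^3)=c(a,a)$. Applying the cocycle identity to the triples $(a,a,a^2)$, $(a,a^2,a)$, and $(a^3,a,a^2)$ yields respectively $c(a,a^3)=c(a,a)$, $c(a^3,a)=c(a,a^3)$, and $c(a^3,a^3)=c(a^3,a)$, and chaining gives the claim.

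Second, I would recompute $c(a^3,a^3)$ inside $G$ via the dihedral relation $bab^{-1}=a^{-1}$. Because $ba^2=a^2 b$ and $c(a^2,b)=-1$, $c(b,a^2)=1$, the lifts satisfy $\widetilde{a^2}\widetilde{b}=-\widetilde{b}\widetilde{a^2}$, hence $\widetilde{b}\widetilde{a^2}\widetilde{b}^{-1}=-\widetilde{a^2}$. Substituting $\widetilde{a}^2=c(a,a)\widetilde{a^2}$ and conjugating by $\widetilde{b}$ produces
\[ \widetilde{b}\,\widetilde{a}^2\,\widetilde{b}^{-1} = -c(a,a)\,\widetilde{a^2}. \]
On the other hand, setting $g:=\widetilde{b}\widetilde{a}\widetilde{b}^{-1}$ (which lifts $bab^{-1}=a^3$), one has $g=\mu\widetilde{a^3}$ for some $\mu\in\mu_2$, and upon squaring the $\mu^2$ disappears, so
\[ \widetilde{b}\,\widetilde{a}^2\,\widetilde{b}^{-1} = g^2 = (\widetilde{a^3})^2 = c(a^3,a^3)\widetilde{a^6} = c(a^3,a^3)\widetilde{a^2}. \]
Comparing the two expressions yields $c(a^3,a^3)=-c(a,a)$.

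Combined with the first computation, this gives $c(a,a)=-c(a,a)$ in $\mu_2$, which is impossible. The argument is a short structural calculation in $G$ with no serious obstacle; the only delicate point is carefully distinguishing the two lifts $\widetilde{a^2}$ and $\widetilde{a}^2$ of $a^2\in D_4$ and inserting the central factor $c(a,a)$ in the right place. The conceptual heart is that $c(a^2,b)=-1$ demands $\widetilde{a^2}$ anticommute with $\widetilde{b}$, while the cocycle identities on $\langle a\rangle$ (with $c(a^2,a^2)=1$) force the square of any lift of $a^3$ to equal $c(a,a)\widetilde{a^2}=\widetilde{a}^2$; these two demands impose opposite signs on $\widetilde{b}\widetilde{a}^2\widetilde{b}^{-1}$, which is the contradiction.
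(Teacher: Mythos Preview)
Your proof is correct and takes a genuinely different route from the paper's. The paper argues indirectly via a classification: if such a $\mu_2$-valued extension $G$ existed, the condition $c(a^2,b)\neq c(b,a^2)$ forces $Z(G)=\mu_2$, and then an external reference (\cite{Ll}) on groups of order $16$ is invoked to conclude that $G$ must contain a copy of $\Z_8$ sitting over $\langle a\rangle$; this is then ruled out because the constraint $c(a^2,a^2)=1$ makes the preimage of $\langle a^2\rangle$ split as $\Z_2\times\Z_2$, so the preimage of $\langle a\rangle$ cannot be cyclic. Your argument is entirely self-contained: you compute $c(a^3,a^3)$ twice, once from the cocycle identity on $\langle a\rangle$ (yielding $c(a,a)$) and once by conjugating $\widetilde{a}^2$ by $\widetilde{b}$ using the asymmetry $c(a^2,b)=-1\neq 1=c(b,a^2)$ (yielding $-c(a,a)$), obtaining an immediate contradiction in $\mu_2$. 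Your approach has the advantage of being elementary and not depending on any structure theory of $2$-groups; the paper's approach, while less direct, makes transparent that the obstruction is precisely the incompatibility between the required non-central lift of $a^2$ and the cyclicity forced over $\langle a\rangle$, which motivates why passing to $\mu_4$ in Lemma~\ref{CM112} resolves the issue.
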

\begin{proof}
Assuming that such cocycle  $c_{\widehat{\widetilde{F^{\times}}}}$ exists,  we can construct an exact sequence 
$1\longrightarrow \mu_2 \longrightarrow G \longrightarrow D_4 \to 1.$ The center group $Z(G)$ is then considered, and its image in $D_4$ cannot be $\langle a^2\rangle $ due to the aforementioned relations. Therefore, $Z(G)=\mu_2$. According to \cite{Ll}, there are only three possible types of groups that $G$ can be.  Additionally, each of these groups contains a subgroup isomorphic to $\Z_8$, whose image in $D_4$ much be $\langle a\rangle$. The cocycle on $\langle a\rangle$ is determined by $c_{\widehat{\widetilde{F^{\times}}}}(a,a)$. If $c_{\widehat{\widetilde{F^{\times}}}}(a,a)=1$, then $c_{\widehat{\widetilde{F^{\times}}}}$ is a trivial cocycle on $\langle a\rangle$,  which contradicts the fact that $Z(G)$ is isomorphic to $\Z_8$.  On the other hand, if $c_{\widehat{\widetilde{F^{\times}}}}(a,a)=-1$, then the cover group over $\langle a \rangle $ can not be $\Z_8$. 
\end{proof}
For the above reason, let us construct a cocycle  $c_{\widehat{\widetilde{F^{\times}}}}$  on  $D_4$ with values in  $\mu_4$:\\
 Write $D_4=\{ a^{k}, ba^{l}\mid 0\leq k, l\leq 3\}$, and define 
$$\left\{ \begin{array}{l}
c_{\widehat{\widetilde{F^{\times}}}}(a^l, a^k)=c_{\widehat{\widetilde{F^{\times}}}}(ba^l, a^k)=1,\\
c_{\widehat{\widetilde{F^{\times}}}}(a^l, ba^k)=c_{\widehat{\widetilde{F^{\times}}}}(ba^l, ba^k)=e^{\tfrac{2\pi i l}{4}}.
\end{array}\right.$$
It is clear that $c_{\widehat{\widetilde{F^{\times}}}}$ extends the function $\widetilde{C}'_M$. 
\begin{lemma}\label{CM112}
$c_{\widehat{\widetilde{F^{\times}}}}$ is a $2$-cocycle on $D_4$.
\end{lemma}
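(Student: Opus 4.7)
The plan is to verify the $2$-cocycle identity $c_{\widehat{\widetilde{F^{\times}}}}(x,y)\, c_{\widehat{\widetilde{F^{\times}}}}(xy,z) = c_{\widehat{\widetilde{F^{\times}}}}(x,yz)\, c_{\widehat{\widetilde{F^{\times}}}}(y,z)$ directly for all $x,y,z \in D_4$, exploiting the regularity of the defining formulas. First I would consolidate the four displayed values into a single uniform expression: define $\phi \colon D_4 \to \{0,1\}$ by $\phi(a^k)=0$ and $\phi(ba^k)=1$, and $\alpha \colon D_4 \to \mathbb{Z}/4\mathbb{Z}$ by $\alpha(a^k)=\alpha(ba^k)=k$. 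Writing $\zeta_4 = e^{2\pi i/4}$, the four defining cases collapse to the single formula $c_{\widehat{\widetilde{F^{\times}}}}(x,y) = \zeta_4^{\alpha(x)\phi(y)}$, so the cocycle identity is equivalent to the congruence $\alpha(x)\phi(y) + \alpha(xy)\phi(z) \equiv \alpha(x)\phi(yz) + \alpha(y)\phi(z) \pmod 4$.

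The verification would then proceed by a case split on $(\phi(y),\phi(z)) \in \{0,1\}^2$. The three cases with $\phi(y)\phi(z) = 0$ are routine: the case $(0,0)$ is vacuous, the case $(1,0)$ reduces to the tautology $\alpha(x)=\alpha(x)$, and the case $(0,1)$ reduces to the additivity $\alpha(xy) = \alpha(x)+\alpha(y) \pmod 4$ for $y \in \langle a\rangle$, which holds in both subcases $x\in\langle a\rangle$ and $x\in b\langle a\rangle$ because right multiplication by a power of $a$ shifts the $a$-exponent additively in either coset. The content of the lemma lies in the remaining case $(\phi(y),\phi(z))=(1,1)$, where $yz \in \langle a\rangle$ and hence $\phi(yz)=0$. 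Writing $y=ba^j$, $z=ba^{j'}$, and using the relation $a^i b = ba^{-i}$ derived from $ba=a^{-1}b$, a short computation gives $\alpha(a^i\cdot ba^j)=j-i$ and $\alpha(ba^i\cdot ba^j)=j-i$, so in both subcases $\alpha(xy)\equiv \alpha(y)-\alpha(x) \pmod 4$. Substituting, the left-hand side of the congruence equals $\alpha(x) + (\alpha(y)-\alpha(x))=\alpha(y)$, matching the right-hand side $0+\alpha(y)$.

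The main obstacle is the coset-swap occurring in the case $(1,1)$: the product $yz$ jumps back into $\langle a\rangle$, and one must verify that the $\alpha$-exponents produced by this jump telescope correctly against the factor $\zeta_4^{\alpha(x)}$ coming from $c_{\widehat{\widetilde{F^{\times}}}}(x,y)$. The key combinatorial identity $\alpha(xy)\equiv \alpha(y)-\alpha(x)\pmod 4$ whenever $\phi(y)=1$ encodes the non-abelian twist $bab=a^{-1}$, and is precisely what makes both sides coincide. Once this identity and the routine cases are in hand, the cocycle property follows immediately, with no further structural input from $D_4$ required; in particular, the preceding Lemma~\ref{CM111} shows that this $\mu_4$-valued extension is the best one can hope for, and my proof makes clear why the single combinatorial identity in case $(1,1)$ forces the codomain to be $\mu_4$ rather than $\mu_2$.
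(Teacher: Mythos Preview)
Your proof is correct and follows essentially the same route as the paper: a direct verification of the cocycle identity by splitting into cases according to which coset of $\langle a\rangle$ each argument lies in. Your encoding $c_{\widehat{\widetilde{F^{\times}}}}(x,y)=\zeta_4^{\alpha(x)\phi(y)}$ streamlines the paper's seven explicit cases into four, with the key step $\alpha(xy)\equiv\alpha(y)-\alpha(x)\pmod 4$ when $\phi(y)=1$ matching exactly what drives the paper's cases (iva) and (ivb).
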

\begin{proof}
1) It is clear that $c_{\widehat{\widetilde{F^{\times}}}}(1,x)=c_{\widehat{\widetilde{F^{\times}}}}(x,1)=1$.\\
2) Let us check that $c_{\widehat{\widetilde{F^{\times}}}}(x,y)c_{\widehat{\widetilde{F^{\times}}}}(xy,z)=c_{\widehat{\widetilde{F^{\times}}}}(x,yz)c_{\widehat{\widetilde{F^{\times}}}}(y,z)$.

(i) If $z=a^j, y=a^k$,  then the left hand is equal to the right hand, resulting in one.

(iia)  If $z=a^j, y=ba^k$, $x=a^l$,  then the left hand is equal to the right hand, resulting in $e^{\tfrac{2\pi i l}{4}}$.

(iib) If $z=a^j, y=ba^k$, $x=ba^l$,  then the left hand is equal to the right hand, resulting in $e^{\tfrac{2\pi i l}{4}}$.

(iiia) If $z=ba^j, y=a^k$, $x=a^l$,  then the left hand is equal to the right hand, resulting in $e^{\tfrac{2\pi i (k+l)}{4}}$.

(iiib) If $z=ba^j, y=a^k$, $x=ba^l$,  then the left hand is equal to the right hand, resulting in $e^{\tfrac{2\pi i (k+l)}{4}}$.

(iva) If $z=ba^j, y=ba^k$, $x=a^l$,  then the left hand is equal to the right hand, resulting in $e^{\tfrac{2\pi i k}{4}}$.

(ivb) If $z=ba^j, y=ba^k$, $x=ba^l$,  then the left hand is equal to the right hand, resulting in $e^{\tfrac{2\pi i k}{4}}$.
\end{proof}
\end{itemize} 
 
Via the projection $ \widehat{\widetilde{F^{\times}}} \ltimes  \Sp(W)\longrightarrow  \widehat{\widetilde{F^{\times}}}$, we view $c_{\widehat{\widetilde{F^{\times}}}}(-, -)$ as a $2$-cocycle on $\widehat{\widetilde{F^{\times}}} \ltimes \Sp(W) $. For our purpose, let us modify the above $2$-cocycle $\widetilde{C}_M(-,-)$ as follows:
\begin{align}\label{doublCM}
\widetilde{\widetilde{C}}_M([y_1,g_1], [y_2, g_2])=\widetilde{C}_M ([y_1,g_1], [y_2, g_2]) c_{\widehat{\widetilde{F^{\times}}}}(y_1, y_2)^{-1},
\end{align}
for $ [y_i, g_i]\in \widehat{\widetilde{F^{\times}}} \ltimes \Sp(W)$.
\begin{lemma}
$\widetilde{\widetilde{C}}_M([y_1,g_1], [y_2, g_2])=1=\widetilde{\widetilde{C}}_M([y_2,g_2], [y_1, g_1])$, for $[y_1, g_1]\in \Delta_D$, $y_1=g_1^{-1}=\begin{pmatrix}
t& 0\\
0 & t^{-1}\end{pmatrix}$.
\end{lemma}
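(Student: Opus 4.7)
The plan is to split off the correction term and reduce everything to the explicit computation of Lemma~\ref{coco1204} plus a case analysis of the $D_4$-cocycle $c_{\widehat{\widetilde{F^{\times}}}}$.

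First I would apply Lemma~\ref{coco1204} directly: since $[y_1,g_1]\in\Delta_D$ with $g_1=y_1^{-1}=\mathrm{diag}(tI,t^{-1}I)$, we already know
\[
\widetilde{C}_M([y_1,g_1],[y_2,g_2])=(t^m,\lambda_{y_2})_F,\qquad \widetilde{C}_M([y_2,g_2],[y_1,g_1])=1.
\]
Combined with the definition (\ref{doublCM}), the claim reduces to showing
\[
c_{\widehat{\widetilde{F^{\times}}}}(y_1,y_2)=(t^m,\lambda_{y_2})_F\quad\text{and}\quad c_{\widehat{\widetilde{F^{\times}}}}(y_2,y_1)=1.
\]
When $2\mid m$ both sides are $1$ by definition of $c_{\widehat{\widetilde{F^{\times}}}}$ and because $(t^m,\lambda_{y_2})_F=1$, so I may assume $2\nmid m$ from now on.

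Second, I would trace the image of $y_1\in D$ under the projection $\widehat{\widetilde{F^{\times}}}\twoheadrightarrow D_4$ of Lemma~\ref{shex}. The diagram
\[
1\longrightarrow D/D^2\longrightarrow \widehat{\widetilde{F^{\times}}}/[(\widetilde{F^{\times}}_+)D^2]\longrightarrow F^{\times}/F^{\times 2}\longrightarrow 1
\]
identifies $D/D^2\simeq Z(D_4)=\langle a^2\rangle$, so $y_1=d^{k}$ maps to $a^{2k}$, in particular into $\langle a\rangle\subset D_4$. From the explicit definition of $c_{\widehat{\widetilde{F^{\times}}}}$ one reads off $c_{\widehat{\widetilde{F^{\times}}}}(\text{anything},a^{k})=1$, which gives $c_{\widehat{\widetilde{F^{\times}}}}(y_2,y_1)=1$ immediately, handling the second equality.

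Third, for the first equality I would use the tables $a\leftrightarrow\iota(\zeta_1\varpiup)$, $b\leftrightarrow\iota(\zeta_1)$, $ba\leftrightarrow\iota(\varpiup)$ to see that $y_2$ has image of the form $a^{j}$ iff $\dot\lambda_{y_2}\in\{\dot 1,\dot{\zeta_1\varpiup}\}$ and of the form $ba^{j}$ iff $\dot\lambda_{y_2}\in\{\dot\zeta_1,\dot\varpiup\}$. Writing $y_1=d^k$ and taking $k$ odd (the other case is trivial), the formula for $c_{\widehat{\widetilde{F^{\times}}}}$ gives $c_{\widehat{\widetilde{F^{\times}}}}(y_1,y_2)=1$ in the first case and $-1$ in the second. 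Since $t^m\equiv \varpiup\zeta_1\pmod{F^{\times 2}}$ in this situation, I would check the four classes of $\dot\lambda_{y_2}$ separately: the Hilbert symbol $(\varpiup\zeta_1,\lambda_{y_2})_F$ equals $1$ on $\{\dot 1,\dot{\zeta_1\varpiup}\}$ (using $(x,x)_F=(x,-1)_F$ together with $-1\in F^{\times 2}$ because $|k_F|\equiv 1\pmod 4$) and equals $-1$ on $\{\dot\zeta_1,\dot\varpiup\}$ (using that $\zeta_1$ is a non-square unit, hence $(\varpiup,\zeta_1)_F=(\zeta_1,\varpiup)_F=-1$). Matching these cases proves $c_{\widehat{\widetilde{F^{\times}}}}(y_1,y_2)=(t^m,\lambda_{y_2})_F$.

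The main obstacle I anticipate is the bookkeeping in the third step: one must carefully pin down the image of $d$ in the quotient $D_4$ (and in particular verify that it lands in $a^2$ rather than in $1$), and then match the sign conventions so that each of the four Hilbert-symbol classes lines up with the correct value of $c_{\widehat{\widetilde{F^{\times}}}}$. Once this identification is pinned down the remaining computations are routine.
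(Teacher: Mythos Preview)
Your reduction is exactly the paper's: use Lemma~\ref{coco1204} to compute $\widetilde{C}_M$, then by (\ref{doublCM}) everything comes down to
\[
c_{\widehat{\widetilde{F^{\times}}}}(y_1,y_2)=(t^m,\lambda_{y_2})_F,\qquad c_{\widehat{\widetilde{F^{\times}}}}(y_2,y_1)=1.
\]
The difference is in how these two identities are obtained. You carry out a case-by-case Hilbert-symbol verification in $D_4$, which is correct (your identification of $D/D^2$ with $\langle a^2\rangle$ and the matching of the four square classes with the explicit table for $c_{\widehat{\widetilde{F^{\times}}}}$ all check out). The paper, by contrast, does not compute anything here: it simply invokes the construction of $c_{\widehat{\widetilde{F^{\times}}}}$. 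Recall that $\widetilde{C}'_M$ was \emph{defined} to be the function $\widetilde{C}_M$ restricted to $\Delta_D$ in one slot (identifying $D$ with $\Delta_D$), and $c_{\widehat{\widetilde{F^{\times}}}}$ was then built precisely so as to extend $\widetilde{C}'_M$ (the sentence ``It is clear that $c_{\widehat{\widetilde{F^{\times}}}}$ extends the function $\widetilde{C}'_M$'' just before Lemma~\ref{CM112}). Hence the two displayed identities are immediate from the construction, and the lemma is a one-line tautology. Your explicit computation is effectively a re-verification of that extension property, which is fine but redundant.
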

\begin{proof}
$$\widetilde{\widetilde{C}}_M([y_1,g_1], [y_2, g_2])=\nu( y_2, g_1)c_{PR, X^{\ast}}(g_1^{y_2}, g_2)c_{\widehat{\widetilde{F^{\times}}}}(y_1, y_2)^{-1}$$
$$=\nu( y_2, g_1)c_{\widehat{\widetilde{F^{\times}}}}(y_1, y_2)^{-1}=(t^m, \lambda_{y_2})_F (t^m, \lambda_{y_2})_F^{-1}=1.$$
$$\widetilde{\widetilde{C}}_M([y_2,g_2], [y_1, g_1])=\nu( y_1, g_2)c_{PR, X^{\ast}}(g_2^{y_1}, g_1)c_{\widehat{\widetilde{F^{\times}}}}(y_2, y_1)^{-1}=c_{\widehat{\widetilde{F^{\times}}}}(y_2, y_1)^{-1}=1.$$
\end{proof}
As a consequence, for any $\widetilde{g}\in \Delta_D$, $\widetilde{g}_1, \widetilde{g}_2\in \widehat{\widetilde{F^{\times}}} \ltimes \Sp(W)$,
$$\widetilde{\widetilde{C}}_M(\widetilde{g}\widetilde{g}_1, \widetilde{g}_2)=\widetilde{\widetilde{C}}_M(\widetilde{g}, \widetilde{g}_1)\widetilde{\widetilde{C}}_M(\widetilde{g}\widetilde{g}_1, \widetilde{g}_2)=\widetilde{\widetilde{C}}_M(\widetilde{g}, \widetilde{g}_1 \widetilde{g}_2)\widetilde{\widetilde{C}}_M(\widetilde{g}_1, \widetilde{g}_2)=\widetilde{\widetilde{C}}_M(\widetilde{g}_1, \widetilde{g}_2).$$
Similarly, $\widetilde{\widetilde{C}}_M(\widetilde{g}_1, \widetilde{g}_2\widetilde{g})=\widetilde{\widetilde{C}}_M(\widetilde{g}_1, \widetilde{g}_2)$.
 Let $  \widehat{\widetilde{F^{\times}}}\ltimes \widetilde{\Sp}(W)$ denote the corresponding  central covering group.  Then there exists the following commutative diagram:
\begin{equation}\label{eq9}
\begin{CD}
@. 1@. 1 @.1 @. \\
@.@VVV @VVV @VVV @.\\
1 @>>> 1 @>>> \mu_8 @= \mu_8 @>>> 1\\
@.@VVV @VVV @VVV @.\\
1 @>>> \Delta_D @>>>  \widehat{\widetilde{F^{\times}}} \ltimes \widetilde{\Sp}(W) @>p_M>> \widetilde{\GMp}(W)@>>> 1\\
@. @\vert @VV\widetilde{p}V @VVpV @.\\
1@>>>\Delta_D @>>>\widehat{\widetilde{F^{\times}}} \ltimes \Sp(W)@>p_S>>\widetilde{\GSp}(W) @>>> 1\\
@.@VVV @VVV @VVV @.\\
@. 1@. 1 @.1 @.
\end{CD}
\end{equation}
where
\begin{align}
 p_M:          &\widehat{\widetilde{F^{\times}}} \ltimes \widetilde{\Sp}(W)\longrightarrow  \widetilde{\GMp}(W);(y, [g,\epsilon]) \longmapsto (yg, \epsilon), \\
 p_S:          &\widehat{\widetilde{F^{\times}}} \ltimes \Sp(W)\longrightarrow  \widetilde{\GSp}(W);(y, g) \longmapsto yg, \\
 \widetilde{p}:& \widehat{\widetilde{F^{\times}}} \ltimes  \widetilde{\Sp}(W)\longrightarrow  \widehat{\widetilde{F^{\times}}} \ltimes \Sp(W);(y, [g,\epsilon]) \longmapsto (y, g),\\
 p:  & \widetilde{\GMp}(W)\longrightarrow  \widetilde{\GSp}(W);(h,\epsilon) \longmapsto h.
 \end{align}
Let us denote the $2$-cocycle associated to $\widetilde{\GMp}(W)$  by $C_M(-,-)$.   Let $(y_1, [g_1, \epsilon_1]), (y_2, [g_2, \epsilon_2])\in \widehat{\widetilde{F^{\times}}}\ltimes \widetilde{\Sp}(W)$. Then:
$$(y_1, [g_1, \epsilon_1])\cdot (y_2, [g_2, \epsilon_2])=(y_1y_2, g_1^{y_2}g_2, \widetilde{\widetilde{C}}_M([y_1,g_1], [y_2, g_2])\epsilon_1\epsilon_2]).$$
Note that $y_1y_2 g_1^{y_2}g_2=y_1g_1y_2g_2$. So
$$C_M(y_1g_1, y_2g_2)=\widetilde{\widetilde{C}}_M([y_1,g_1], [y_2, g_2])=\nu(y_2, g_1)c_{PR, X^{\ast}}(g_1^{y_2}, g_2)c_{\widehat{\widetilde{F^{\times}}}}(y_1, y_2)^{-1}.$$
Note that the restriction of $C_M(-, -)$ on $\Sp(W)$ is just the Perrin-Rao's cocycle.
\begin{lemma}\label{cen3}
The center of $\widetilde{\GMp}(W)$ contains the subgroup  $\widetilde{F^{\times}}_+ \times \mu_8$.
\end{lemma}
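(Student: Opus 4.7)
The plan is to verify centrality separately for the two obvious candidates: the kernel $\mu_8$ of the covering $p \colon \widetilde{\GMp}(W) \to \widetilde{\GSp}(W)$, and lifts of elements of $\widetilde{F^{\times}}_+$. Centrality of $\mu_8$ is automatic from the commutative diagram (\ref{eq9}), since it is the central kernel of the covering by construction. So the real content is showing that every lift of an element $y_1 \in \widetilde{F^{\times}}_+$ commutes with every element of $\widetilde{\GMp}(W)$.

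Fix $y_1 \in \widetilde{F^{\times}}_+$ and an arbitrary element $y_2 g_2 \in \widetilde{\GSp}(W)$ with $y_2 \in \widehat{\widetilde{F^{\times}}}$, $g_2 \in \Sp(W)$. By Lemma \ref{ste}(2), $\widetilde{F^{\times}}_+$ lies in the center of $\widetilde{\GSp}(W)$, so $y_1$ and $y_2 g_2$ commute downstairs; hence commutativity of their chosen lifts upstairs reduces to the single identity
\begin{equation*}
C_M(y_1, y_2 g_2) = C_M(y_2 g_2, y_1).
\end{equation*}
Writing $y_1 = y_1 \cdot 1_{\Sp(W)}$ and plugging into the explicit formula
\begin{equation*}
C_M(y_1 g_1, y_2 g_2) = \nu(y_2, g_1)\, c_{PR, X^{\ast}}(g_1^{y_2}, g_2)\, c_{\widehat{\widetilde{F^{\times}}}}(y_1, y_2)^{-1},
\end{equation*}
the Perrin-Rao factor drops out (one argument is the identity), while $\nu(y_2, 1)=1$ trivially and $\nu(y_1, g_2)=1$ because $y_1 \in \iota(\widetilde{F^{\times 2}})$ acts trivially by conjugation on $\Sp(W)$ (this is one of the bulleted reductions preceding Lemma \ref{nu1mod41}). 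Hence both sides reduce to the respective values of $c_{\widehat{\widetilde{F^{\times}}}}$.

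The last step is to check that $c_{\widehat{\widetilde{F^{\times}}}}(y_1, y_2) = 1 = c_{\widehat{\widetilde{F^{\times}}}}(y_2, y_1)$ whenever $y_1 \in \widetilde{F^{\times}}_+$. If $2 \mid m$ this is immediate since $c_{\widehat{\widetilde{F^{\times}}}} \equiv 1$ by definition. If $2 \nmid m$, recall from Lemma \ref{shex} that $(\widetilde{F^{\times}}_+) D^2$ is precisely the kernel of $\widehat{\widetilde{F^{\times}}} \twoheadrightarrow D_4$, and $c_{\widehat{\widetilde{F^{\times}}}}$ was defined as the pullback of a cocycle on $D_4$ which satisfies $c_{\widehat{\widetilde{F^{\times}}}}(1, x) = c_{\widehat{\widetilde{F^{\times}}}}(x, 1) = 1$ (Lemma \ref{CM112}, step 1). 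Since $y_1 \in \widetilde{F^{\times}}_+$ maps to $1 \in D_4$, both values are $1$. This is the only step that is not entirely mechanical; the main subtlety is keeping straight that $\widetilde{F^{\times}}_+$ really projects to the identity in $D_4$ (as opposed to the normal subgroup $(\widetilde{F^{\times}}_+) D^2$), which is why no $D^2$-contribution is needed here. Combining these vanishings yields $C_M(y_1, y_2 g_2) = 1 = C_M(y_2 g_2, y_1)$, so $y_1$ is central, and together with $\mu_8$ this gives $\widetilde{F^{\times}}_+ \times \mu_8 \subseteq Z(\widetilde{\GMp}(W))$.
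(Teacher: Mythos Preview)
Your proposal is correct and follows essentially the same approach as the paper: write an arbitrary element as $y_2 g_2$, take $t \in \widetilde{F^{\times}}_+$ as $t \cdot 1_{\Sp(W)}$, and compute directly from the formula for $C_M$ that both $C_M(t, y_2 g_2)$ and $C_M(y_2 g_2, t)$ vanish. Your write-up is in fact more careful than the paper's on one point: you explicitly justify $c_{\widehat{\widetilde{F^{\times}}}}(y_1, y_2) = c_{\widehat{\widetilde{F^{\times}}}}(y_2, y_1) = 1$ via the $D_4$-pullback description and Lemma~\ref{CM112}, whereas the paper simply asserts these equalities.
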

\begin{proof}
For any $g\in \widetilde{\GSp}(W)$, let us write $g=y_2g_2$, with $y_2\in \widehat{\widetilde{F^{\times}}}$, $g_2\in \Sp(W)$, and for  $t\in \widetilde{F^{\times}}_+$, $t=t\cdot 1_{\Sp(W)}$. Then:
$$C_M(t, g)=\nu(y_2, 1_{\Sp(W)})c_{PR, X^{\ast}}(1_{\Sp(W)}^{y_2}, g_2)c_{\widehat{\widetilde{F^{\times}}}}(t, y_2)^{-1}=c_{\widehat{\widetilde{F^{\times}}}}(t, y_2)^{-1}=1,$$
$$C_M(g,t)=\nu(t, g_2)c_{PR, X^{\ast}}(g_2^{t}, 1_{\Sp(W)})c_{\widehat{\widetilde{F^{\times}}}}(y_2, t)^{-1}=c_{\widehat{\widetilde{F^{\times}}}}(y_2, t)^{-1}=1.$$
\end{proof}
\subsection{The group $\overline{\widetilde{\GSp}}(W)$}\label{case1mod413}
We can also  lift the action of $\widehat{\widetilde{F^{\times}}}$ from  $\Sp(W)$ to $\overline{\Sp}(W)$, and then  obtain a group $\widehat{\widetilde{F^{\times}}}\ltimes \overline{\Sp}(W)$ as well as an exact sequence:
$$ 1\longrightarrow \mu_2 \longrightarrow \widehat{\widetilde{F^{\times}}} \ltimes \overline{\Sp}(W) \stackrel{\widetilde{p}}{\longrightarrow} \widehat{\widetilde{F^{\times}}} \ltimes \Sp(W) \longrightarrow 1.$$
Let $\overline{C}_M(-,-)$ denote the $2$-cocycle   associated to this exact sequence. Similarly,
\begin{equation}\label{equations2}
\overline{C}_M([y_1,g_1], [y_2, g_2]) =\nu_2( y_2, g_1)c(g_1^{y_2}, g_2),   \quad\quad [y_i, g_i]\in \widehat{\widetilde{F^{\times}}} \ltimes \Sp(W).
\end{equation}

Let us also give the explicit expression of $\nu_2(y_2, g_1)$.  For $y\in \widehat{\widetilde{F^{\times}}} $, $g\in \Sp(W)$, according to Lemma \ref{nu2},
$$\nu_2( y, g)=\nu(y,g) \frac{m_{X^{\ast}}(g)}{m_{X^{\ast}}(g^{y})}.$$
\begin{lemma}
 $\nu_2(-,-)$ is a function on $(\widehat{\widetilde{F^{\times}}} /[(\widetilde{F^{\times}}_+) D^2]) \times \Sp(W)$.
\end{lemma}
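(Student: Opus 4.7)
The plan is to show that $\nu_2(h, g) = 1$ for every $h \in (\widetilde{F^{\times}}_+) D^2$ and every $g \in \Sp(W)$. Given this, descent to the quotient follows by Lemma \ref{twoau}: for any $y \in \widehat{\widetilde{F^{\times}}}$ and $h \in (\widetilde{F^{\times}}_+) D^2$, we get $\nu_2(yh, g) = \nu_2(y, g)\nu_2(h, g^y) = \nu_2(y, g)$, and since $(\widetilde{F^{\times}}_+) D^2$ is normal in $\widehat{\widetilde{F^{\times}}}$ by Lemma \ref{shex}, rewriting $hy = y \cdot (y^{-1}hy)$ with $y^{-1}hy \in (\widetilde{F^{\times}}_+) D^2$ gives $\nu_2(hy, g) = \nu_2(y, g)\nu_2(y^{-1}hy, g^y) = \nu_2(y, g)$ as well.

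First, I would handle $h_1 \in \widetilde{F^{\times}}_+$. By Lemma \ref{ste}(2), $h_1$ is central in $\widetilde{\GSp}(W)$, so $g^{h_1} = g$ and formula \eqref{mx} yields $m_{X^{\ast}}(g^{h_1}) = m_{X^{\ast}}(g)$. Hence $\nu_2(h_1, g) = \nu(h_1, g)$, and the bullet-list preceding Lemma \ref{nu1mod41} (specifically the clause for $y \in \iota(\widetilde{F^{\times 2}})$) already shows $\nu(h_1, g) = 1$, since $\nu(h_1, \cdot)$ then becomes a character of the perfect group $\Sp(W)$.

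Second, I would treat $h_2 \in D$ (which of course contains $D^2$). Writing $h_2 = \bigl[\begin{smallmatrix} tI & 0 \\ 0 & t^{-1}I\end{smallmatrix}\bigr]$ with $t \in F^{\times}$, Lemma \ref{mod11} places $h_2$ in $\Sp(W) \cap M$, and the first bullet gives $\nu(h_2, g) = 1$. For the $m_{X^{\ast}}$-ratio, I expand $g = p_1 \omega_S p_2$ with $p_i = \bigl[\begin{smallmatrix} a_i & b_i \\ 0 & (a_i^{\ast})^{-1}\end{smallmatrix}\bigr]$; a direct computation gives $h_2^{-1} \omega_S h_2 = \omega_S \, m_S$, where $m_S \in M$ acts by $t^2$ on the $S$-block of $X$ and by $t^{-2}$ on the $S$-block of $X^{\ast}$. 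Hence $g^{h_2} = p_1' \omega_S (m_S p_2') \in P \omega_S P$ with the same $a_i$-parts as $g$, and
\[
x(g^{h_2}) \;=\; \det(a_1 a_2)^{-1} \cdot t^{-2|S|} \;\equiv\; x(g) \pmod{F^{\times 2}},
\]
while $j(g^{h_2}) = |S| = j(g)$. Formula \eqref{mx} then yields $m_{X^{\ast}}(g^{h_2}) = m_{X^{\ast}}(g)$, so $\nu_2(h_2, g) = 1$. Finally, for general $h = h_1 h_2$ with $h_1 \in \widetilde{F^{\times}}_+$ and $h_2 \in D^2$, Lemma \ref{twoau} and the centrality of $h_1$ combine to give $\nu_2(h, g) = \nu_2(h_1, g)\nu_2(h_2, g^{h_1}) = 1 \cdot 1 = 1$.

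The main obstacle is the invariance $x(g^{h_2}) \equiv x(g) \pmod{F^{\times 2}}$ in Step 2. The essential input is that every element of $D$ is \emph{block-scalar} on $W = X \oplus X^{\ast}$ (acting by a single scalar $t$ on $X$ and $t^{-1}$ on $X^{\ast}$), so the Bruhat cell $P \omega_S P$ is preserved and the Jacobian picked up by $h_2^{-1}\omega_S h_2$ is the perfect square $t^{-2|S|}$. Were $h_2 \in M$ not of this scalar-block shape, the ratio would not lie in $F^{\times 2}$ and the argument would fail — this is precisely why the descent quotient is $D_4$ (via Lemma \ref{shex}) rather than anything coarser.
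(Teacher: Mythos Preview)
Your proof is correct and follows essentially the same route as the paper: show $\nu_2$ is trivial on $(\widetilde{F^{\times}}_+)D^2$ by checking that $m_{X^{\ast}}(g^y)=m_{X^{\ast}}(g)$ (so that $\nu_2=\nu=1$ there), then descend via Lemma~\ref{twoau} and normality. The paper is terser, simply citing \eqref{mx} rather than writing out the Bruhat computation; your explicit verification that $x(g^{h_2})=s^{\pm 2|S|}x(g)\in x(g)\,F^{\times 2}$ and $j(g^{h_2})=j(g)$ is exactly the content behind that citation.

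One remark on your closing paragraph: your computation actually works for every $h_2\in D$, not just $D^2$, because the scalar-block conjugation always contributes a \emph{square} factor $s^{\pm 2|S|}$ regardless of whether $s$ itself is a square. So your argument in fact shows $\nu_2$ descends all the way to $\widehat{\widetilde{F^{\times}}}/[(\widetilde{F^{\times}}_+)D]\simeq F^{\times}/F^{\times 2}$, which is finer than the $D_4$ of the stated lemma. Your comment that $D_4$ is ``rather than anything coarser'' is therefore slightly off: the obstacle you describe (non-scalar-block $h_2\in M$) explains why one cannot descend along all of $M$, but your own proof shows the $D_4$ in the statement is not optimal for $\nu_2$ alone. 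This does not affect the validity of the lemma's proof.
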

\begin{proof}
For $y\in D^2$ or $y\in  \widetilde{F^{\times}}_+$, by (\ref{mx}), $m_{X^{\ast}}(g^y)=m_{X^{\ast}}(g)$. Hence $\nu_2( y, g)=\nu(y,g)=1$, for such $y$.
 If $x=zy$, for some $z\in  \widehat{\widetilde{F^{\times}}} $, and $y\in ( \widetilde{F^{\times}}_+) D^2$,  then  by Lemma \ref{twoau},   $\nu_2(x,g)=\nu_2(z,g)\nu_2(y,g^z)=\nu_2(z,g)$.
If $x=yz=z(z^{-1}yz)$, for some $z\in  \widehat{\widetilde{F^{\times}}} $, and $y\in ( \widetilde{F^{\times}}_+) D^2$,  then  by Lemma \ref{twoau},   $\nu_2(x,g)=\nu_2(z,g)\nu_2( z^{-1}yz,g^{z})=\nu_2(z,g)$.
\end{proof}
Note that by Lemma \ref{shex}, $\widehat{\widetilde{F^{\times}}} /[(\widetilde{F^{\times}}_+) D^2]  \simeq D_4$.
\begin{lemma}
For $g_0=\begin{pmatrix}
a& 0\\
0& (a^{\ast})^{-1}
\end{pmatrix}\in M$, $g\in \Sp(W)$, $y\in  \widehat{\widetilde{F^{\times}}} $, $\lambda_y\in F^{\times}$, we have:
\begin{itemize}
\item[(1)] $\nu_2(y, g_0)=(\det a,\lambda_y)_F$,
\item[(2)] $\nu_2(y, g_0g)=\nu_2(y, g_0) \nu_2(y, g)(\det a, x(g)x(g^y))_F$,
\item[(3)] $\nu_2(y, gg_0)= \nu_2(y, g)\nu_2(y, g_0)(\det a, x(g)x(g^y))_F$.
\end{itemize}
\end{lemma}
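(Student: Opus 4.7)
The plan is to reduce each identity to the analogous statements for $\nu(-,-)$, namely Lemma \ref{gg0mod41}, by plugging into the relation $\nu_2(y,g)=\nu(y,g)\,m_{X^{\ast}}(g)/m_{X^{\ast}}(g^{y})$ from Lemma \ref{nu2} and exploiting the multiplicative behaviour of $m_{X^{\ast}}$ against left/right multiplication by an element of the Levi $M$. This mirrors, essentially verbatim, the proof of the corresponding lemma in the $|k_F|\equiv 3\,(\bmod\,4)$ case, so the only genuine work is to check that the bookkeeping with the Hilbert symbol $(\det a,x(g)x(g^{y}))_F$ still goes through.

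For (1), note $g_0\in M$ and conjugation by $y\in\widehat{\widetilde{F^{\times}}}$ acts on the Levi as an inner automorphism of $\GSp(W)$, so $g_0^{y}\in M$ and is again diagonal with $F^\times$-multiple of $a$ on the top block. From the explicit formula (\ref{mx}) (which depends only on $j(g)=|S|$ and $x(g)$), together with the fact that for such diagonal matrices these invariants are unchanged by conjugation, one obtains $m_{X^{\ast}}(g_0)=m_{X^{\ast}}(g_0^{y})$. Thus $\nu_2(y,g_0)=\nu(y,g_0)$, and Lemma \ref{gg0mod41}(1) finishes the computation.

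For (2), write $g=p_1\omega_S p_2$ so that $g_0 g=(g_0 p_1)\omega_S p_2$ still has Bruhat type $|S|$, with $x(g_0 g)=\det(a)\,x(g)$ in $F^\times/F^{\times 2}$; the analogous identity $x(g_0^y g^y)=\det(a)\,x(g^y)$ holds because $g_0^y$ lies in $M$. Applying (\ref{mx}) and the multiplicativity $\gamma(ab,\psi^{1/2})=\gamma(a,\psi^{1/2})\gamma(b,\psi^{1/2})(a,b)_F$ of the normalized Weil index, one gets
\begin{equation*}
m_{X^{\ast}}(g_0 g)=m_{X^{\ast}}(g_0)m_{X^{\ast}}(g)(\det a,x(g))_F,\qquad
m_{X^{\ast}}(g_0^y g^y)=m_{X^{\ast}}(g_0)m_{X^{\ast}}(g^y)(\det a,x(g^y))_F.
\end{equation*}
Combining these with Lemma \ref{gg0mod41}(2) (which gives $\nu(y,g_0 g)=\nu(y,g_0)\nu(y,g)$) and with part (1) of the current lemma yields (2). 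Statement (3) follows by the symmetric argument, using Lemma \ref{gg0mod41}(3) and the factorization $gg_0=p_1\omega_S(p_2 g_0)$ so that $|S|$ is again unchanged and $x(gg_0)=\det(a)x(g)$.

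The main (and only) potential obstacle is verifying that the extra Hilbert-symbol factors coming from $m_{X^{\ast}}$ on both the numerator and denominator indeed collapse to the single symbol $(\det a,x(g)x(g^{y}))_F$; this is exactly the cancellation observed in the $|k_F|\equiv 3\,(\bmod\,4)$ case, and nothing in the argument depends on the residue characteristic beyond the formulas already established in (\ref{mx}), Lemma \ref{nu2}, and Lemma \ref{gg0mod41}.
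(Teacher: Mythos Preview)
Your proposal is correct and follows essentially the same approach as the paper's proof: both reduce to Lemma \ref{gg0mod41} via the relation $\nu_2(y,g)=\nu(y,g)\,m_{X^{\ast}}(g)/m_{X^{\ast}}(g^{y})$ from Lemma \ref{nu2}, together with the factorization $m_{X^{\ast}}(g_0g)=m_{X^{\ast}}(g_0)m_{X^{\ast}}(g)(\det a,x(g))_F$ (and its conjugated analogue). One small imprecision: in (1) you describe $g_0^{y}$ as ``diagonal with $F^{\times}$-multiple of $a$ on the top block'', but for $y=\iota(\varpiup)$ one actually gets $g_0^{y}=\begin{pmatrix}(a^{\ast})^{-1}&0\\0&a\end{pmatrix}$; the conclusion $m_{X^{\ast}}(g_0)=m_{X^{\ast}}(g_0^{y})$ still holds because $\det((a^{\ast})^{-1})\equiv\det a\pmod{F^{\times 2}}$ and $j(g_0^{y})=0$.
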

\begin{proof}
1) By (\ref{mx}), $m_{X^{\ast}}(g_0^y)=m_{X^{\ast}}(g_0)$, so $ \nu_2(y, g_0)=\nu(y, g_0)=(\det a,\lambda_y)_F$.\\
2)  By (\ref{mx}), $m_{X^{\ast}}(g_0g)=m_{X^{\ast}}(g_0)m_{X^{\ast}}(g)(\det a, x(g))_F$. Since $g_0^y \in M$,
\begin{align*}
m_{X^{\ast}}(g_0^yg^y)&=m_{X^{\ast}}(g_0^y)m_{X^{\ast}}(g^y)(\det a, x(g^y))_F\\
&=m_{X^{\ast}}(g_0)m_{X^{\ast}}(g^y)(\det a, x(g^y))_F.
\end{align*}
Hence:
\begin{align*}
\nu_2(y, g_0g)&=\nu(y,g_0g) \frac{m_{X^{\ast}}(g_0g)}{m_{X^{\ast}}(g_0^yg^{y})}\\
&=\nu(y, g_0) \nu(y, g) \frac{m_{X^{\ast}}(g_0)m_{X^{\ast}}(g)(\det a, x(g))_F}{m_{X^{\ast}}(g_0^y)m_{X^{\ast}}(g^y)(\det a, x(g^y))_F}\\
&=\nu_2(y, g_0) \nu_2(y, g)(\det a, x(g)x(g^y))_F.
\end{align*}
3) The proof is similar as (2).
\end{proof}
\begin{lemma}
\begin{align}\label{comp}
\overline{C}_M([y_1,g_1], [y_2, g_2])=\widetilde{C}_M([y_1,g_1], [y_2, g_2])m_{X^{\ast}}(g_1^{y_2}g_2)^{-1}m_{X^{\ast}}(g_1) m_{X^{\ast}}(g_2).
\end{align}
\end{lemma}
\begin{proof}
By Lemma \ref{nu2}, $\nu_2(y_2, g_1)=\nu(y_2,g_1) \frac{m_{X^{\ast}}(g)}{m_{X^{\ast}}(g^{y_2})}$. By (\ref{28inter}), $$c(g_1^{y_2}, g_2)=m_{X^{\ast}}(g_1^{y_2}g_2)^{-1} m_{X^{\ast}}(g_1^{y_2}) m_{X^{\ast}}(g_2) c_{PR, {X^{\ast}}}(g_1^{y_2},g_2).$$
Hence:
\begin{align*}
\overline{C}_M([y_1,g_1], [y_2, g_2])&=\nu(y_2,g_1) \frac{m_{X^{\ast}}(g_1)}{m_{X^{\ast}}(g_1^{y_2})}m_{X^{\ast}}(g_1^{y_2}g_2)^{-1} m_{X^{\ast}}(g_1^{y_2}) m_{X^{\ast}}(g_2) c_{PR, {X^{\ast}}}(g_1^{y_2},g_2)\\
&=\widetilde{C}_M([y_1,g_1], [y_2, g_2])m_{X^{\ast}}(g_1^{y_2}g_2)^{-1}m_{X^{\ast}}(g_1) m_{X^{\ast}}(g_2).
\end{align*}
\end{proof}
\begin{lemma}\label{coco1}
For $[y_1, g_1]\in \Delta_D$,  $g_1=y_1^{-1}= \begin{bmatrix}
t& \\
& t^{-1}\end{bmatrix}$,  $[y_2, g_2]\in  \widehat{\widetilde{F^{\times}}}  \ltimes \Sp(W)$, $g_2=p_1\omega_S p_2$, $p_i=\begin{pmatrix}
a_i& b_i\\
0& d_i
\end{pmatrix}$, $i=1,2$,
$$\overline{C}_M([y_1,g_1], [y_2, g_2])=( t^m, \lambda_{y_2})_F(t^m, \det a_1a_2)_F, \quad \quad \overline{C}_M([y_2,g_2], [y_1, g_1]) =(t^m, \det a_1a_2)_F.$$
\end{lemma}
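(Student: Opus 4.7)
The plan is to mirror the proof of the analogous statement for the $|k_F|\equiv 3\pmod 4$ case in Section~\ref{case3mod43}: reduce $\overline{C}_M$ to $\widetilde{C}_M$ via the compatibility identity \eqref{comp}, substitute the $\widetilde{C}_M$-values on $\Delta_D$ from Lemma~\ref{coco1204}, and collapse the remaining $m_{X^{\ast}}$-factors using that $g_1\in M$.

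Concretely, for the first equality I begin from
$$\overline{C}_M([y_1,g_1], [y_2, g_2])=\widetilde{C}_M([y_1,g_1], [y_2, g_2])\, m_{X^{\ast}}(g_1^{y_2}g_2)^{-1}m_{X^{\ast}}(g_1) m_{X^{\ast}}(g_2),$$
which is \eqref{comp}, and substitute $\widetilde{C}_M([y_1,g_1],[y_2,g_2])=(t^m,\lambda_{y_2})_F$ from Lemma~\ref{coco1204}. Since $g_1=\diag(tI,t^{-1}I)$ lies in $M$ with $|S|=0$ and $x(g_1)=t^{-m}$, its conjugate $g_1^{y_2}$ is again diagonal with the same class in $F^{\times}/F^{\times 2}$, so $m_{X^{\ast}}(g_1^{y_2})=m_{X^{\ast}}(g_1)$ by \eqref{mx}. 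The multiplicativity
$$m_{X^{\ast}}(g_0g)=m_{X^{\ast}}(g_0)m_{X^{\ast}}(g)(\det a,x(g))_F\quad\text{for }g_0\in M,$$
which drives Lemma~\ref{gg0mod41}, then expands $m_{X^{\ast}}(g_1^{y_2}g_2)=m_{X^{\ast}}(g_1)m_{X^{\ast}}(g_2)(t^m,x(g_2))_F$. After cancellation, and using $x(g_2)=\det a_1a_2$ modulo squares together with the self-inverse property of the Hilbert symbol, the first claim follows.

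For the second equality, Lemma~\ref{coco1204} gives $\widetilde{C}_M([y_2,g_2],[y_1,g_1])=1$, and $y_1=g_1^{-1}$ yields $g_2^{y_1}g_1=g_1g_2$. Applying the same $M$-multiplicativity to $m_{X^{\ast}}(g_1g_2)$ cancels $m_{X^{\ast}}(g_1)m_{X^{\ast}}(g_2)$ and leaves the single Hilbert symbol $(t^m,\det a_1a_2)_F$, as required.

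The one point I expect to require care is the identity $m_{X^{\ast}}(g_1^{y_2})=m_{X^{\ast}}(g_1)$: unlike in the $|k_F|\equiv 3\pmod 4$ case, where $D=\mu_2$ is central, here $g_1$ is a non-scalar diagonal element and $y_2$ can involve the Weyl-type lifts $\iota(\varpiup)$ or $\iota(\zeta_1)$ from Section~\ref{wideparen}. A direct inspection against those explicit lifts shows that conjugation by $y_2$ sends $g_1$ either to itself or to $g_1^{-1}$; in both cases $|S|$ remains $0$ and $x$ stays in the same coset of $F^{\times 2}$, so \eqref{mx} yields the required equality.
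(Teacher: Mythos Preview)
Your proposal is correct and follows essentially the same route as the paper's proof: both start from the compatibility identity \eqref{comp}, insert the $\widetilde{C}_M$-values from Lemma~\ref{coco1204}, expand $m_{X^{\ast}}(g_1^{y_2}g_2)$ (resp.\ $m_{X^{\ast}}(g_1g_2)$) via the $M$-multiplicativity rule, and cancel. Your explicit verification that $m_{X^{\ast}}(g_1^{y_2})=m_{X^{\ast}}(g_1)$ (via the observation that conjugation by the Weyl-type lifts sends $g_1$ to $g_1^{\pm 1}$, leaving the square class of $x(g_1)$ unchanged) is a step the paper's proof uses silently when it cancels $m_{X^{\ast}}(g_1^{y_2})^{-1}$ against $m_{X^{\ast}}(g_1)$, so your write-up is in fact slightly more complete.
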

\begin{proof}
By (\ref{comp}),
\begin{align*}
&\overline{C}_M([y_1,g_1], [y_2, g_2])\\
& =\widetilde{C}_M([y_1,g_1], [y_2, g_2])m_{X^{\ast}}(g_1^{y_2}g_2)^{-1}m_{X^{\ast}}(g_1) m_{X^{\ast}}(g_2)\\
&=\widetilde{C}_M([y_1,g_1], [y_2, g_2])m_{X^{\ast}}(g_1^{y_2})^{-1}m_{X^{\ast}}(g_2)^{-1}(t^m, x(g_2))_Fm_{X^{\ast}}(g_1) m_{X^{\ast}}(g_2)\\
&= ( t^m, \lambda_{y_2})_F(t^m, x(g_2))_F\\
&=( t^m, \lambda_{y_2})_F(t^m, \det a_1a_2)_F.
\end{align*}
\begin{align*}
\overline{C}_M([y_2,g_2], [y_1, g_1])& =\widetilde{C}_M([y_2,g_2], [y_1, g_1])m_{X^{\ast}}(g_2^{y_1}g_1)^{-1}m_{X^{\ast}}(g_1) m_{X^{\ast}}(g_2)\\
&=\widetilde{C}_M([y_2,g_2], [y_1, g_1])m_{X^{\ast}}(g_1g_2)^{-1}m_{X^{\ast}}(g_1) m_{X^{\ast}}(g_2)\\
&=(t^m, x(g_2))_F=(t^m, \det a_1a_2)_F.
\end{align*}
\end{proof}
Similarly,  let us modify the above $2$-cocycle $\overline{C}_M(-,-)$ as follows:
\begin{align}\label{doublCM21}
\overline{\overline{C}}_M([y_1,g_1], [y_2, g_2])=\overline{C}_M ([y_1,g_1], [y_2, g_2]) c_{ \widehat{\widetilde{F^{\times}}} }(y_1, y_2)^{-1},
\end{align}
for $ [y_i, g_i]\in  \widehat{\widetilde{F^{\times}}}  \ltimes \Sp(W)$.
\begin{lemma}
$\overline{\overline{C}}_M([y_1,g_1], [y_2, g_2])=\widetilde{\widetilde{C}}_M([y_1,g_1], [y_2, g_2])m_{X^{\ast}}(g_1^{y_2}g_2)^{-1}m_{X^{\ast}}(g_1) m_{X^{\ast}}(g_2)$.
\end{lemma}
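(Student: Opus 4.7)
The plan is to observe that this final statement is a direct corollary of the previous lemma (equation~\ref{comp}) combined with the definitions (\ref{doublCM}) and (\ref{doublCM21}). Both $\widetilde{\widetilde{C}}_M$ and $\overline{\overline{C}}_M$ are obtained from $\widetilde{C}_M$ and $\overline{C}_M$, respectively, by the \emph{same} multiplicative modification $c_{\widehat{\widetilde{F^{\times}}}}(y_1, y_2)^{-1}$. Therefore any multiplicative identity relating $\widetilde{C}_M$ and $\overline{C}_M$ should transfer verbatim to an identity between $\widetilde{\widetilde{C}}_M$ and $\overline{\overline{C}}_M$, provided the extra $m_{X^{\ast}}$-factors depend only on the $\Sp(W)$-coordinates $g_1, g_2$ (which is the case).

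First, I would recall the preceding lemma, which asserts
\begin{equation*}
\overline{C}_M([y_1,g_1], [y_2, g_2])=\widetilde{C}_M([y_1,g_1], [y_2, g_2])\,m_{X^{\ast}}(g_1^{y_2}g_2)^{-1}m_{X^{\ast}}(g_1) m_{X^{\ast}}(g_2),
\end{equation*}
and which itself was obtained by combining Lemma~\ref{nu2} with the key intertwining identity (\ref{28inter}) relating $c(-,-)$ and $c_{PR, X^{\ast}}(-,-)$ through the normalizing constants $m_{X^{\ast}}$. Next, I would multiply both sides of this identity by the scalar $c_{\widehat{\widetilde{F^{\times}}}}(y_1, y_2)^{-1}$, which depends only on the projections of $y_1, y_2$ to $\widehat{\widetilde{F^{\times}}}$ and not on the $\Sp(W)$-coordinates or on the $m_{X^{\ast}}$-terms.

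Finally, I would apply the defining equations (\ref{doublCM}) and (\ref{doublCM21}) on each side to identify
\begin{equation*}
\widetilde{C}_M([y_1,g_1], [y_2, g_2])\,c_{\widehat{\widetilde{F^{\times}}}}(y_1, y_2)^{-1}=\widetilde{\widetilde{C}}_M([y_1,g_1], [y_2, g_2]),
\end{equation*}
and likewise
\begin{equation*}
\overline{C}_M([y_1,g_1], [y_2, g_2])\,c_{\widehat{\widetilde{F^{\times}}}}(y_1, y_2)^{-1}=\overline{\overline{C}}_M([y_1,g_1], [y_2, g_2]),
\end{equation*}
which yields the claimed formula immediately. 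There is no real obstacle here; the substance of the argument already lies in the preceding lemma, and the only point to verify is that the $c_{\widehat{\widetilde{F^{\times}}}}$ factor commutes with everything in sight, which is automatic since it is a $\mu_4$-valued scalar depending only on the $\widehat{\widetilde{F^{\times}}}$-components. The statement is essentially a bookkeeping consequence of the compatibility between the two modifications.
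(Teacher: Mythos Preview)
Your proposal is correct and matches the paper's own proof exactly: the paper simply says ``It follows from Lemma~\ref{comp}, and (\ref{doublCM})(\ref{doublCM21}),'' which is precisely the argument you outline in more detail.
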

\begin{proof}
It follows from Lemma \ref{comp}, and (\ref{doublCM})(\ref{doublCM21}).
\end{proof}
\begin{lemma}\label{equalD}
$\overline{\overline{C}}_M([y_1,g_1], [y_2, g_2])=\overline{\overline{C}}_M([y_2,g_2], [y_1, g_1])$, for $[y_1, g_1]\in \Delta_D$, $y_1=g_1^{-1}=\begin{pmatrix}
t& 0\\
0 & t^{-1}\end{pmatrix}$.
\end{lemma}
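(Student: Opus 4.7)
The plan is to exploit the relation
\[
\overline{\overline{C}}_M([y_1,g_1], [y_2, g_2])=\widetilde{\widetilde{C}}_M([y_1,g_1], [y_2, g_2]) \cdot m_{X^{\ast}}(g_1^{y_2}g_2)^{-1}m_{X^{\ast}}(g_1) m_{X^{\ast}}(g_2)
\]
established in the preceding lemma, together with the vanishing $\widetilde{\widetilde{C}}_M([y_1,g_1], [y_2,g_2]) = 1 = \widetilde{\widetilde{C}}_M([y_2,g_2], [y_1,g_1])$ proven earlier for pairs with $[y_1,g_1]\in\Delta_D$ of the stated form. This reduces the required identity to the symmetry of the $m_{X^*}$-ratio, i.e.\ to showing
\[
m_{X^*}(g_1^{y_2}g_2) \;=\; m_{X^*}(g_2^{y_1}g_1).
\]

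The crucial observation is that since $g_1 = y_1^{-1}$, one computes $g_2^{y_1}g_1 = y_1^{-1}g_2 y_1 g_1 = y_1^{-1} g_2 = g_1 g_2$. Thus the right-hand side equals $m_{X^*}(g_1 g_2)$. For the left-hand side, I would first verify that $g_1^{y_2}$ still lies in $M$ and has the same invariants as $g_1$: since $g_1 = \mathrm{diag}(tI_m, t^{-1}I_m)$ and $y_2\in\widehat{\widetilde{F^\times}}$ is built from scalars and the Weyl-type generators $\iota(\zeta_1),\iota(\varpiup)$, a direct case check shows $g_1^{y_2}\in\{g_1, g_1^{-1}\}$, both of which lie in $M$ with $x$-class $t^m F^{\times 2} = t^{-m}F^{\times 2}$ and $j$-invariant $0$. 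Consequently $m_{X^*}(g_1^{y_2}) = m_{X^*}(g_1)$.

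Applying the multiplicative formula $m_{X^*}(g_0 g) = m_{X^*}(g_0)m_{X^*}(g)(\det a, x(g))_F$ for $g_0\in M$ with upper-left block $a$ (already used in the proof of Lemma~\ref{coco1}) to both sides yields
\[
m_{X^*}(g_1^{y_2}g_2) = m_{X^*}(g_1)m_{X^*}(g_2)(t^m, x(g_2))_F = m_{X^*}(g_1 g_2),
\]
so the two $m_{X^*}$-ratios coincide and both evaluate to $(t^m, x(g_2))_F$. The asserted symmetry follows.

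The main point requiring care is the identification $g_2^{y_1}g_1 = g_1 g_2$, which relies essentially on the defining condition $g_1=y_1^{-1}$ of $\Delta_D$, and the verification that conjugation by an arbitrary $y_2 \in \widehat{\widetilde{F^{\times}}}$ preserves the stratum and the Rao invariants of $g_1$. Once these are in hand the remainder is bookkeeping with the formulas already recorded for $m_{X^*}$ and the normalization constants, and no new input beyond what was established for Lemma~\ref{coco1} is needed.
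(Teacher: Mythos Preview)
Your argument is correct and lands on the same value $(t^m,x(g_2))_F$ for both sides that the paper obtains. The organization differs only slightly: the paper plugs in the already-computed values of $\overline{C}_M$ from Lemma~\ref{coco1} together with the defining relation $\overline{\overline{C}}_M=\overline{C}_M\cdot c_{\widehat{\widetilde{F^{\times}}}}^{-1}$ (using that $c_{\widehat{\widetilde{F^{\times}}}}(y_1,y_2)=(t^m,\lambda_{y_2})_F$ and $c_{\widehat{\widetilde{F^{\times}}}}(y_2,y_1)=1$), whereas you go through the relation $\overline{\overline{C}}_M=\widetilde{\widetilde{C}}_M\cdot m_{X^*}\text{-factors}$ and the vanishing of $\widetilde{\widetilde{C}}_M$ on $\Delta_D$, then recompute the $m_{X^*}$-factors directly. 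Since those $m_{X^*}$-factors are exactly what drove the proof of Lemma~\ref{coco1}, the two routes are the same computation unwound in different orders; neither offers an advantage over the other.
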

\begin{proof}
$$\overline{\overline{C}}_M([y_1,g_1], [y_2, g_2])=( t^m, \lambda_{y_2})_F(t^m, x(g_2))_Fc_{ \widehat{\widetilde{F^{\times}}} }(y_1, y_2)^{-1}=(t^m, x(g_2))_F.$$
$$\overline{\overline{C}}_M([y_2,g_2], [y_1, g_1])=(t^m,  x(g_2))_Fc_{ \widehat{\widetilde{F^{\times}}} }(y_2, y_1)^{-1}=(t^m, x(g_2))_F.$$
\end{proof}
\begin{corollary}
$\overline{\overline{C}}_M([y_1,g_1], [y_2, g_2])=1=\overline{\overline{C}}_M([y_2,g_2], [y_1, g_1])$, for $[y_1, g_1], [y_2,g_2]\in \Delta_D$.
\end{corollary}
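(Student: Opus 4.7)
The plan is to derive the corollary directly from the preceding Lemma \ref{equalD}, combined with an explicit Hilbert-symbol computation exploiting the standing hypothesis $|k_F|\equiv 1(\bmod4)$.

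Applying Lemma \ref{equalD} to a pair $[y_1,g_1],[y_2,g_2]\in\Delta_D$ with $g_1=y_1^{-1}=\diag(tI,t^{-1}I)$, I already obtain
\[
\overline{\overline{C}}_M([y_1,g_1],[y_2,g_2])=(t^m,x(g_2))_F=\overline{\overline{C}}_M([y_2,g_2],[y_1,g_1]).
\]
Thus the whole corollary reduces to showing that this single Hilbert symbol is trivial whenever both pairs lie in $\Delta_D$.

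To evaluate it, I would use Lemma \ref{mod11}: the group $D$ is cyclic, generated by $d=\diag(\varpiup^{-1}\zeta_1 I,\varpiup\zeta_1^{-1}I)$. Writing $y_1=d^{l}$ and $y_2=d^{k}$, I get $g_2=d^{-k}=\diag((\varpiup\zeta_1^{-1})^{k}I,(\varpiup^{-1}\zeta_1)^{k}I)\in M\subset P$. Since $g_2$ has trivial Weyl part, Rao's function evaluates to $x(g_2)=\det(g_2|_{X^{\ast}})\cdot(F^{\times})^{2}=(\varpiup^{-1}\zeta_1)^{km}(F^{\times})^{2}$; similarly $t=(\varpiup\zeta_1^{-1})^{l}$. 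Setting $\alpha=\varpiup\zeta_1^{-1}$, bimultiplicativity of the Hilbert symbol then yields
\[
(t^m,x(g_2))_F=(\alpha^{lm},\alpha^{-km})_F=(\alpha,\alpha)_F^{-lkm^{2}}=(\alpha,-1)_F^{-lkm^{2}},
\]
using the standard identity $(a,a)_F=(a,-1)_F$.

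It remains to observe that $(\alpha,-1)_F=1$, which is precisely where the standing assumption $|k_F|\equiv 1(\bmod4)$ enters: under it, $-1$ is a square in $k_F^{\times}$, and combined with $U_1\subseteq F^{\times 2}$ (Section \ref{Ftimes}) this gives $-1\in F^{\times 2}$, so $(\alpha,-1)_F=1$. Hence $(t^m,x(g_2))_F=1$, and by the symmetry already recorded in Lemma \ref{equalD} both cocycle values are $1$. No step is genuinely hard here; the only arithmetic input beyond Lemmas \ref{equalD} and \ref{mod11} is the fact that $-1$ is a square in $F$ under the residue-field hypothesis.
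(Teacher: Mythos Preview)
Your argument is correct and follows essentially the same route as the paper: reduce to the Hilbert-symbol value $(t^m,x(g_2))_F$ coming from the proof of Lemma~\ref{equalD}, express it via the generator $\varpiup\zeta_1^{-1}$ of $D$, and kill it using $(\alpha,\alpha)_F=(\alpha,-1)_F=1$ since $-1\in F^{\times 2}$ under the hypothesis $|k_F|\equiv 1(\bmod4)$. The paper's proof is slightly terser, splitting on the parity of $m$ (the case $2\mid m$ being immediate because $t^m$ is then a square) and only invoking the identity $(\varpiup\zeta_1^{-1},-1)_F=1$ when $2\nmid m$; your version handles both parities uniformly, which is a harmless cosmetic difference.
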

\begin{proof}
 If $2\mid  m$, by the above lemma,  the result holds. Similarly, if $2\nmid m$, it follows from the fact that  $(\varpiup \xi_1^{-1},\varpiup \xi_1^{-1})_F=(\varpiup \xi_1^{-1},-1)_F=1$.
\end{proof}
 Let  $\overline{\widehat{\widetilde{F^{\times}}}\ltimes\Sp}(W)$ denote the corresponding  central covering group associated to $\overline{\overline{C}}_M(-,-)$.
\begin{lemma}
 $\Delta_D$ is a normal subgroup of    $\overline{\widehat{\widetilde{F^{\times}}}\ltimes\Sp}(W)$.
\end{lemma}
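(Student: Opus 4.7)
The plan is to bootstrap the fact that $\overline{\overline{C}}_M|_{\Delta_D\times\Delta_D}\equiv 1$ (the Corollary preceding this Lemma) to the statement that the canonical lift of $\Delta_D$ in the covering is normal. First I would identify $\Delta_D$ with its image under the set-theoretic section $s\colon\widehat{\widetilde{F^{\times}}}\ltimes\Sp(W)\to\overline{\widehat{\widetilde{F^{\times}}}\ltimes\Sp}(W)$; the Corollary ensures this image is a subgroup. I would also observe that $\Delta_D$ is already normal in the base group $\widehat{\widetilde{F^{\times}}}\ltimes\Sp(W)$, being the kernel of the projection $p_S\colon(y,g)\mapsto yg$ onto $\widetilde{\GSp}(W)$, so for any $g$ in the base and $d\in\Delta_D$ the conjugate $gdg^{-1}$ lies in $\Delta_D$; a short matrix computation in fact shows $(y_g,g_g)(y_d,y_d^{-1})(y_g,g_g)^{-1}=(y_gy_dy_g^{-1},(y_gy_dy_g^{-1})^{-1})$.

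Since the central $\mu_2$ acts trivially by conjugation, the normality of $s(\Delta_D)$ reduces to showing that $s(g)\,s(d)\,s(g)^{-1}\in s(\Delta_D)$ for every $g$ in the base and every $d\in\Delta_D$. Expanding using the $2$-cocycle gives
\[
s(g)\,s(d)\,s(g)^{-1}=s(gdg^{-1})\cdot\bigl(1,\,\overline{\overline{C}}_M(gd,g^{-1})\,\overline{\overline{C}}_M(g,d)\,\overline{\overline{C}}_M(g,g^{-1})^{-1}\bigr),
\]
so the task becomes verifying that the $\mu_2$-valued central factor equals $1$. Applying the cocycle identity to $(g,d,g^{-1})$ and then to $(g,g^{-1},d')$ with $d':=gdg^{-1}\in\Delta_D$, combined with the symmetry $\overline{\overline{C}}_M(d,h)=\overline{\overline{C}}_M(h,d)$ of Lemma \ref{equalD}, this factor simplifies to the ratio $\overline{\overline{C}}_M(d,g^{-1})\,\overline{\overline{C}}_M(d',g^{-1})^{-1}$.

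Finally, I would evaluate this ratio with the explicit formula of Lemma \ref{equalD}: both values take the shape $(t^m,x(g_0))_F$ respectively $((t')^m,x(g_0))_F$, where $y_d=\diag(tI,t^{-1}I)$, $y_{d'}=\diag(t'I,(t')^{-1}I)$, and $g_0$ is the $\Sp(W)$-component of $g^{-1}$. Since $D$ is cyclic by Lemma \ref{mod11}, the conjugation action of $\widehat{\widetilde{F^{\times}}}$ on $D$ factors through $\operatorname{Aut}(D)\simeq\{\pm\mathrm{id}\}$; a direct check on the generators $\iota(\zeta_1)$ and $\iota(\varpiup)$ (the diagonal generators $\iota(\widetilde{F^{\times 2}})$ act trivially on the diagonal group $D$) shows the nontrivial case is inversion. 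Therefore $t'=t^{\pm 1}$, so $t^m\equiv (t')^m\pmod{F^{\times 2}}$, and the two Hilbert symbols agree, giving normality.

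The principal obstacle is the cocycle bookkeeping in the middle paragraph, where the symmetry of Lemma \ref{equalD} must be invoked only at positions where an argument lies in $\Delta_D$, while $d$ is shuffled past $g$ and $g^{-1}$ using the normality in the base. A secondary but essential point is the $\pm\mathrm{id}$-assertion for the conjugation action on $D$; once this is confirmed on the two generators of $\widehat{\widetilde{F^{\times}}}/\iota(\widetilde{F^{\times 2}})$ that do not lie in the Levi $M$, the Hilbert-symbol step is immediate.
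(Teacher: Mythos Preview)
Your proposal is correct and follows essentially the same route as the paper: both arguments hinge on the explicit value $\overline{\overline{C}}_M(d,h)=(t^m,x(g_h))_F$ from Lemma~\ref{equalD}, the observation that conjugation by $\widehat{\widetilde{F^{\times}}}$ sends the generator of $D$ to its inverse or fixes it, and the identity $(t,a)_F=(t^{-1},a)_F$. The only difference is cosmetic---the paper verifies $(d,1)(g,\epsilon)=(g,\epsilon)(d^{y_g},1)$ directly in two lines, while you pass through the conjugation formula and two applications of the cocycle identity to reach the same comparison; note also that the central kernel here is $\mu_4$ rather than $\mu_2$ (see diagram~(\ref{eq10})), though the specific values you compute are indeed Hilbert symbols.
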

\begin{proof}
By the above lemma, $\overline{\overline{C}}_M([y_1,g_1], [y_2, g_2])=(t^m, x(g_2))_F=\overline{\overline{C}}_M([y_2,g_2], [y_1, g_1])$, for any $[y_1, g_1]\in \Delta_D$, $y_1=g_1^{-1}=\begin{pmatrix}
t& 0\\
0 & t^{-1}\end{pmatrix}$. Note that $y_1^{y_2}=y_2^{-1} y_1y_2 =y_1$ or $y_1^{-1}$, and $(t, a)_F=(t^{-1}, a)_F$, for any $a\in F^{\times}$. Then:
\begin{align*}
([y_1,g_1], 1)([y_2, g_2], \epsilon_2)&=([y_1,g_1][y_2, g_2], (t^m, x(g_2))_F\epsilon_2)\\
&=([y_1y_2,g_1^{y_2}g_2], (t^m, x(g_2))_F\epsilon_2);
\end{align*}
\begin{align*}
&([y_2, g_2], \epsilon_2)([y_1^{y_2},g_1^{y_2}], 1)\\
&=([y_2, g_2][y_1^{y_2},g_1^{y_2}],(t^m, x(g_2))_F \epsilon_2)\\
&=([y_2y_2^{-1}y_1y_2,(y_2^{-1}y_1^{-1}y_2g_2y_2^{-1}y_1y_2)(y_2^{-1}g_1y_2)],(t^m, x(g_2))_F \epsilon_2)\\
&=([y_1y_2,y_2^{-1}y_1^{-1}y_2g_2y_2^{-1}y_1g_1y_2],(t^m, x(g_2))_F \epsilon_2)\\
&=([y_1y_2,y_2^{-1}g_1y_2g_2],(t^m, x(g_2))_F \epsilon_2)\\
&=([y_1y_2,g_1^{y_2}g_2],(t^m, x(g_2))_F \epsilon_2).
\end{align*}
Hence
$$([y_2, g_2], \epsilon_2)^{-1}([y_1,g_1], 1)([y_2, g_2], \epsilon_2)=([y_1^{y_2},g_1^{y_2}], 1).$$
\end{proof}
 Then:
  \begin{equation}\label{eq10}
\begin{CD}
@. 1@. 1 @.1 @. \\
@.@VVV @VVV @VVV @.\\
1 @>>> 1 @>>> \mu_4 @= \mu_4 @>>> 1\\
@.@VVV @VVV @VVV @.\\
1 @>>> \Delta_D @>>> \overline{ \widehat{\widetilde{F^{\times}}}  \ltimes \Sp}(W)@>\overline{p_S}>> \overline{\widetilde{\GSp}}(W)@>>> 1\\
@. @\vert @VV\widetilde{p}V @VVpV @.\\
1@>>>\Delta_D @>>>\widehat{\widetilde{F^{\times}}}  \ltimes \Sp(W)@>p_S>>\widetilde{\GSp}(W) @>>> 1\\
@.@VVV @VVV @VVV @.\\
@. 1@. 1 @.1 @.
\end{CD}
\end{equation}
In the penultimate column,  it is the following exact sequence:
\begin{align}\label{mumod4122}
1 \longrightarrow \mu_4\longrightarrow \overline{\widetilde{\GSp}}(W) \longrightarrow \widetilde{\GSp}(W)  \longrightarrow 1.
\end{align}
 Let us denote the $2$-cocycle associated to $\overline{\widetilde{\GSp}}(W)$  by $\overline{C}(-,-)$.   For $h_1, h_2\in \widetilde{\GSp}(W)$, let us write
 \begin{align}\label{zzcoc4122}
 h_i=y_ig_i,
 \end{align}
 for $y_i=\iota(\widetilde{\lambda}_{h_i})\in \widehat{\widetilde{F^{\times}}} $, $g_i=y_i^{-1} h_i\in \Sp(W)$.\footnote{Note that in Lemma \ref{equalD}, the value of the $2$-cocycle is not necessarily $1$. Therefore, $\overline{C}(-,-)$ depends on the choice made in $(\ref{zzcoc4122})$.} 
 So we can choose the $\overline{C}(-,-)$ associated to $\overline{\widetilde{\GSp}}(W)$ as follows:
$$\overline{C}(h_1, h_2)= \overline{\overline{C}}_M([y_1,g_1], [y_2, g_2])=\nu_2( y_2, g_1)c(g_1^{y_2}, g_2) c_{\widetilde{F^{\times}}}(y_1, y_2)^{-1}.$$
Then:
\begin{align}\label{zzcoc114122}
\overline{C}(h_1, h_2)=C_M(h_1, h_2)m_{X^{\ast}}(g_1^{y_2}g_2)^{-1}m_{X^{\ast}}(g_1) m_{X^{\ast}}(g_2).
\end{align}

 \begin{lemma}\label{cen41}
 \begin{itemize}
 \item[(1)] $\overline{C}(h_1, h_2)=1=\overline{C}(h_2, h_1)$, for $h_1\in \widetilde{F^{\times}}_+$.
 \item[(2)] The center of $\overline{\widetilde{\GSp}}(W)$ contains the subgroup  $\widetilde{F^{\times}}_+ \times \mu_4$.
 \end{itemize}
\end{lemma}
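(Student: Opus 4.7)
The plan is to derive part (1) directly from the comparison formula (\ref{zzcoc114122}) together with Lemma \ref{cen3}, and then to obtain part (2) as a formal consequence by combining (1) with the centrality of $\widetilde{F^{\times}}_+$ in $\widetilde{\GSp}(W)$ established in Lemma \ref{ste}.

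For part (1), I would begin by observing that an element $h_1\in \widetilde{F^{\times}}_+=\sqrt{\widetilde{F^{\times 2}}}$ lies in the image of $\iota$ restricted to $\widetilde{F^{\times 2}}$, so $h_1\in \widehat{\widetilde{F^{\times}}}$. In the decomposition $h_1=y_1g_1$ with $y_1=\iota(\widetilde{\lambda}_{h_1})$ and $g_1=y_1^{-1}h_1$, one therefore has $y_1=h_1$ and $g_1=1_{\Sp(W)}$. Write $h_2=y_2g_2$ as in (\ref{zzcoc4122}). Then $g_1^{y_2}=1$, $g_1^{y_2}g_2=g_2$ and $m_{X^{\ast}}(g_1)=m_{X^{\ast}}(1)=1$, so (\ref{zzcoc114122}) collapses to
\begin{equation*}
\overline{C}(h_1,h_2)=C_M(h_1,h_2)\cdot m_{X^{\ast}}(g_2)^{-1}m_{X^{\ast}}(g_2)=C_M(h_1,h_2).
\end{equation*}
A symmetric computation for $\overline{C}(h_2,h_1)$, using that $y_1\in \widetilde{F^{\times}}_+$ is central in $\widetilde{\GSp}(W)$ (Lemma \ref{ste}(2)) so that $g_2^{y_1}=g_2$ and hence $g_2^{y_1}g_1=g_2$, gives $\overline{C}(h_2,h_1)=C_M(h_2,h_1)$. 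Both right-hand sides equal $1$ by Lemma \ref{cen3}, which asserts that $\widetilde{F^{\times}}_+\times \mu_8$ lies in the center of $\widetilde{\GMp}(W)$.

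For part (2), I would note that $\mu_4$ is central in $\overline{\widetilde{\GSp}}(W)$ by construction of the covering in (\ref{mumod4122}), so it suffices to check that every element $(h_1,\epsilon_1)$ with $h_1\in \widetilde{F^{\times}}_+$ commutes with every $(h_2,\epsilon_2)\in \overline{\widetilde{\GSp}}(W)$. Using part (1) and the product law in $\overline{\widetilde{\GSp}}(W)$,
\begin{equation*}
(h_1,\epsilon_1)(h_2,\epsilon_2)=(h_1h_2,\epsilon_1\epsilon_2),\qquad (h_2,\epsilon_2)(h_1,\epsilon_1)=(h_2h_1,\epsilon_1\epsilon_2),
\end{equation*}
and these agree because $h_1h_2=h_2h_1$ in $\widetilde{\GSp}(W)$ by the centrality of $\widetilde{F^{\times}}_+$ (Lemma \ref{ste}(2)). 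Hence $\widetilde{F^{\times}}_+\times \mu_4\subseteq Z(\overline{\widetilde{\GSp}}(W))$.

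The only subtle point, which I would verify carefully, is that the choice of section in (\ref{zzcoc4122}) is compatible on both sides: for $h_1\in \widetilde{F^{\times}}_+$ the decomposition is canonical with $g_1=1$, whereas for $h_2$ one uses the prescribed $y_2=\iota(\widetilde{\lambda}_{h_2})$, and these choices are consistent with the formulas for $C_M$ and the $m_{X^{\ast}}$-terms used in (\ref{zzcoc114122}). This is the main potential obstacle, but it is a bookkeeping issue rather than a conceptual one, since the section $\iota$ is a group homomorphism on $\widetilde{F^{\times 2}}$ by the lemma preceding Definition \ref{Fposit}, which guarantees that $g_1=1_{\Sp(W)}$ in the chosen decomposition whenever $h_1\in \widetilde{F^{\times}}_+$.
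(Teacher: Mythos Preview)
Your proposal is correct and follows essentially the same approach as the paper: both arguments decompose $h_1\in\widetilde{F^{\times}}_+$ as $y_1=h_1$, $g_1=1_{\Sp(W)}$, invoke Lemma~\ref{cen3} to get $C_M(h_1,h_2)=C_M(h_2,h_1)=1$, and use (\ref{zzcoc114122}) together with the centrality of $y_1$ (so $g_2^{y_1}=g_2$) to show the $m_{X^{\ast}}$-correction factors are trivial. Your treatment of part (2) and your final paragraph on the section compatibility simply make explicit what the paper leaves implicit.
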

\begin{proof}
For $h_i$, let us write $h_i=y_i g_i$ by the formula (\ref{zzcoc4122}). Then for $h_1=t\in \widetilde{F^{\times}}_+$, $\widetilde{\lambda}_{h_1}\in \widetilde{F^{\times 2}}$ and  $\iota(\widetilde{\lambda}_{h_1})=t$. Hence $y_1=t$, $g_1=1_{\Sp(W)}$.\\
1) By Lemma \ref{cen3}, $C_M(h_1, h_2)=C_M(h_2, h_1)=1$.  As $y_1\in  \widetilde{F^{\times}}_+$, $g_2^{y_1}=g_2$. Then:
$$ m_{X^{\ast}}(g_1^{y_2}g_2)^{-1}m_{X^{\ast}}(g_1) m_{X^{\ast}}(g_2)= m_{X^{\ast}}(g_2)^{-1}m_{X^{\ast}}(1_{\Sp(W)}) m_{X^{\ast}}(g_2)=1;$$
$$m_{X^{\ast}}(g_2^{y_1}g_1)^{-1}m_{X^{\ast}}(g_2) m_{X^{\ast}}(g_1)=m_{X^{\ast}}(g_2g_1)^{-1}m_{X^{\ast}}(g_2) m_{X^{\ast}}(g_1)=1.$$
By (\ref{zzcoc114122}), the result holds.\\
2) It follows from (1).
\end{proof}
\subsubsection{} 
Let us  define: 
\begin{itemize}
\item $\PGSp^{\pm }(W)=\widetilde{\GSp}(W)/\widetilde{F^{\times}}_+$
\item $\overline{\PGSp^{\pm}}(W)=\overline{\widetilde{\GSp}}(W)/\widetilde{F^{\times}}_+$
\item $\PGMp^{\pm}(W)=\widetilde{\GMp}(W)/{\widetilde{F^{\times}}_+}$.
\end{itemize}
Then there exists the following exact sequence:
\begin{align}
1 \longrightarrow \Sp(W) \longrightarrow \PGSp^{\pm }(W) \stackrel{\dot{\widetilde{\lambda}}}{\longrightarrow}\widetilde{F^{\times}} / \widetilde{F^{\times 2}} \simeq F^{\times}/F^{\times 2} \longrightarrow 1.
\end{align}
By   Lemmas  \ref{cen3}, \ref{cen41}, the two $2$-cocycles $C_M(-,-)$, $\overline{C}(-,-)$ are  defined on $\PGSp^{\pm}(W)$. Then $\PGMp^{\pm}(W)$ and  $\overline{\PGSp^{\pm}}(W)$ are central extensions of  $\PGSp^{\pm}(W)$ by $\mu_8$ and  $\mu_4$, respectively, associated with  $C_M(-,-)$ and  $\overline{C}(-,-)$. Moreover, there exist two exact sequences:
$$1 \to \Mp(W) \to \PGMp^{\pm}(W) \stackrel{\dot{\lambda}}{\longrightarrow} F^{\times}/F^{\times 2} \to 1.$$
$$1 \to \overline{\Sp}(W) \to \overline{\PGSp^{\pm}}(W)\stackrel{\dot{\lambda}}{\longrightarrow} F^{\times}/F^{\times 2} \to 1.$$
\section{Extended  Weil representation: Case $|k_F|\equiv 1(\bmod4)$}\label{case1mod414}
\subsection{} Recall $\widetilde{F^{\times}} \simeq    \widetilde{\mathfrak{f}_1} \times   \mathfrak{f}_2 \times U_1\times  \langle \varpiup\rangle$.  There exists the following exact sequence(cf. (\ref{fffww})):
\begin{equation}\label{fff12mod41}
1\longrightarrow \widetilde{F^{\times 2}} \longrightarrow \widetilde{F^{\times}} \longrightarrow \widetilde{F^{\times}} /\widetilde{F^{\times 2}} \simeq F^{\times}/F^{\times 2} \longrightarrow 1.
\end{equation}
Note that $\sqrt{\widetilde{F^{\times 2}}}=\widetilde{F^{\times}}_+$. Recall that $c''(-,-)$ is a $2$-cocycle associated to this exact sequence. The image of $c''(-,-)$ belongs to $  \widetilde{\mathfrak{f}_1^2}\times  \langle \varpiup^2\rangle$.  Let us define
$$\wideparen{c''}(-,-)=\sqrt{c''(-,-)}.$$
 Via the projections: $\widetilde{F^{\times} }\longrightarrow \widetilde{F^{\times} }/\widetilde{F^{\times 2}}$, $\widetilde{\GSp}(W) \longrightarrow \widetilde{F^{\times}}$, $\PGSp^{\pm}(W) \stackrel{\widetilde{\dot{\lambda}}}{\longrightarrow} F^{\times}/F^{\times 2}$, we can also view $\wideparen{c''}(-,-)$ as a cocycle on $\widetilde{F^{\times} }$,$\widetilde{\GSp}(W)$, or $\PGSp^{\pm}(W)$.  Let $\wideparen{\widetilde{F^{\times} }/\widetilde{F^{\times 2}}}$, $\wideparen{\widetilde{F^{\times} }}$, $\wideparen{\widetilde{\GSp}}(W)$,  $\wideparen{\PGSp^{\pm}}(W)$ denote the corresponding covering groups associated to $\wideparen{c''}(-,-)$. Then there exists the following commutative diagram:
\begin{equation}\label{eq11}
\begin{CD}
@. 1@. 1 @.1 @. \\
@.@VVV @VVV @VVV @.\\
1 @>>> 1@>>> \widetilde{F^{\times}}_+@>\{- \}^2>> \widetilde{F^{\times 2}}@>>> 1\\
@.@VVV @VVV @VVV @.\\
1@>>>\Sp(W) @>>>\wideparen{\widetilde{\GSp}}(W)@>\wideparen{\lambda}>>\wideparen{\widetilde{F^{\times} }} @>>> 1\\
@. @\vert @VV\wideparen{p}V @VVV @.\\
1 @>>> \Sp(W) @>>> \wideparen{\PGSp^{\pm}}(W)@>\wideparen{\dot{\widetilde{\lambda}}}>> \wideparen{F^{\times}/F^{\times 2}} @>>> 1\\
@.@VVV @VVV @VVV @.\\
@. 1@. 1 @.1 @.
\end{CD}
\end{equation}
\subsection{A twisted  action}  Recall that there exists  a group homomorphism  $\widetilde{\GSp}(W) \longrightarrow \GSp(W)$. So there exists an action of $\widetilde{\GSp}(W)$ on $\Ha(W)$. Recall $\lambda: \widetilde{\GSp}(W) \stackrel{\widetilde{\lambda}}{\longrightarrow} \widetilde{F^{\times}} \longrightarrow F^{\times}$. Recall that $\widetilde{F^{\times 2}}=\widetilde{\mathfrak{f}^2} \times U_1\times  \langle \varpiup^2\rangle$, and there exists a group homomorphism:
 \begin{align}
\sqrt{\,}: \widetilde{F^{\times 2}} \longrightarrow  \widetilde{F^{\times}}_+ \subseteq \widetilde{\GSp}(W).
\end{align}
Moreover, $\widetilde{\lambda}_{\sqrt{(a^2, \epsilon)}}=(a^2, \epsilon)$.     Let us consider a twisted right action of $\widetilde{F^{\times}}_+\Sp(W)$ on $\Ha(W)$ in the following way:
\begin{equation}\label{alphaac}
\alpha:\Ha(W) \times [\widetilde{F^{\times}}_+\Sp(W)] \longrightarrow \Ha(W); ((v,t), \widetilde{g}) \longmapsto (\sqrt{\widetilde{\lambda}_{ \widetilde{g}}}^{-1} v  \widetilde{g}, t).
\end{equation}
The restriction of $\alpha$ on $\Sp(W)$ is the usual action. By observation,  this action factors through the following group homomorphism:
$$ \widetilde{F^{\times}}_+\Sp(W)\simeq \widetilde{F^{\times}}_+ \times \Sp(W) \longrightarrow \Sp(W);(t, g) \longmapsto  g.$$
 Our next purpose is to extend this action to $\wideparen{\widetilde{\GSp}}(W)$. For any element $\widetilde{g}= (g,\epsilon)\in \widetilde{\GSp}(W)$, $\widetilde{\lambda}_{\widetilde{g}}=(\lambda_{g}, \epsilon)\in \widetilde{F^{\times}}$.
 Let us write
$$\lambda_g=a_g^2 t_g, $$
for $a_g^2\in F^{\times 2}$, and $t_g=\kappa(\dot{\lambda}_g)\in \{ 1, \zeta_1, \varpiup, \zeta_1\varpiup\}$. It corresponds to an element $([a_g^2,1], \dot{t}_g)$ of $\widetilde{F^{\times}}$. Then:
$$\widetilde{\lambda}_{\widetilde{g}}=[a_g^2,\epsilon][ t_g,1],$$
for $[a_g^2,\epsilon]\in \widetilde{F^{\times 2}}$. Let us define
$$ a_{\widetilde{g}}=\sqrt{[a_g^2,\epsilon]} \in \widetilde{F^{\times}}_+$$
\begin{lemma}
\begin{itemize}
\item[(1)] $a_{\widetilde{g}}^{-1}a_{\widetilde{g'}}^{-1}=\sqrt{c''(\widetilde{\lambda}_{\widetilde{g}}, \widetilde{\lambda}_{\widetilde{g'}})} a_{\widetilde{g}\widetilde{g'}}^{-1}=\wideparen{c''}(\widetilde{g},\widetilde{g'})a_{\widetilde{g}\widetilde{g'}}^{-1}$, for $\widetilde{g}=(g,\epsilon), \widetilde{g'}=(g',\epsilon')\in \widetilde{\GSp}(W)$.
\item[(2)] $\widetilde{\lambda}_{a_{\widetilde{g}}^{-1}\widetilde{g}}\widetilde{\lambda}_{a_{\widetilde{g'}}^{-1}\widetilde{g'}}=c''(\widetilde{g}, \widetilde{g'})\widetilde{\lambda}_{a_{\widetilde{g}\widetilde{g'}}^{-1}\widetilde{g}\widetilde{g'}}$, for $\widetilde{g}, \widetilde{g'}\in  \widetilde{\GSp}(W)$.
\end{itemize}
\end{lemma}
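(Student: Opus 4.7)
The plan is to reduce both parts to the defining identity for $c'''$ recorded in (\ref{c33}), by expressing $\widetilde{\lambda}_{\widetilde{g}\widetilde{g'}}$ inside the central extension $1\to \widetilde{F^{\times 2}}\to \widetilde{F^{\times}}\to F^{\times}/F^{\times 2}\to 1$ and then transferring the resulting identity from $\widetilde{F^{\times 2}}$ up to the central subgroup $\widetilde{F^{\times}}_+\subset \widetilde{\GSp}(W)$ via the group homomorphism $\sqrt{\,}$ of Lemma \ref{ste}(2). Part (2) will be a routine consequence of part (1) by applying $\widetilde{\lambda}$.

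First I would compute $\widetilde{\lambda}_{\widetilde{g}\widetilde{g'}}$ explicitly. The multiplication in $\widetilde{\GSp}(W)$ gives $\widetilde{g}\widetilde{g'}=[gg',c'''(\lambda_g,\lambda_{g'})\epsilon\epsilon']$, hence $\widetilde{\lambda}_{\widetilde{g}\widetilde{g'}}=[\lambda_g\lambda_{g'},c'''(\lambda_g,\lambda_{g'})\epsilon\epsilon']$. Since $t_gt_{g'}=c'(\dot{t}_g,\dot{t}_{g'})\,t_{gg'}$, the decomposition $\lambda_{gg'}=a_{gg'}^{2}t_{gg'}$ forces
\begin{align*}
a_{gg'}^{2}=a_g^{2}a_{g'}^{2}\,c'(\dot{t}_g,\dot{t}_{g'}),\qquad \epsilon_{gg'}=c'''(\lambda_g,\lambda_{g'})\,\epsilon\epsilon'.
\end{align*}

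The main computation then happens inside $\widetilde{F^{\times 2}}$. Using $[a_1,\epsilon_1][a_2,\epsilon_2]=[a_1a_2,c_{\sqrt{\,}}(a_1,a_2)\epsilon_1\epsilon_2]$ together with $c''(\dot{t}_g,\dot{t}_{g'})=[c'(\dot{t}_g,\dot{t}_{g'}),1]$, a direct expansion yields
$$
[a_g^{2},\epsilon][a_{g'}^{2},\epsilon']\,c''(\dot{t}_g,\dot{t}_{g'})=\bigl[a_g^{2}a_{g'}^{2}c'(\dot{t}_g,\dot{t}_{g'}),\ c_{\sqrt{\,}}(a_g^{2}a_{g'}^{2},c'(\dot{t}_g,\dot{t}_{g'}))\,c_{\sqrt{\,}}(a_g^{2},a_{g'}^{2})\,\epsilon\epsilon'\bigr],
$$
and by (\ref{c33}) the $\mu_2$-entry is exactly $c'''(\lambda_g,\lambda_{g'})\epsilon\epsilon'$. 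Therefore
$[a_{gg'}^{2},\epsilon_{gg'}]=[a_g^{2},\epsilon][a_{g'}^{2},\epsilon']\,c''(\dot{t}_g,\dot{t}_{g'})$ in $\widetilde{F^{\times 2}}$. Applying the group homomorphism $\sqrt{\,}$ from Lemma \ref{ste}(2), and noting $\sqrt{c''(\dot{t}_g,\dot{t}_{g'})}=\wideparen{c''}(\widetilde{g},\widetilde{g'})$, gives $a_{\widetilde{g}\widetilde{g'}}=a_{\widetilde{g}}\,a_{\widetilde{g'}}\,\wideparen{c''}(\widetilde{g},\widetilde{g'})$; rearranging using centrality of $\widetilde{F^{\times}}_+$ yields (1). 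For (2), I would multiply the relation (1) on the right by $\widetilde{g}\widetilde{g'}$, commute the central $a$'s past the $\widetilde{g}$'s, and then apply $\widetilde{\lambda}$; since $\widetilde{\lambda}\circ\sqrt{\,}$ is the identity on $\widetilde{F^{\times 2}}$, the factor $\wideparen{c''}(\widetilde{g},\widetilde{g'})$ maps to $c''(\widetilde{g},\widetilde{g'})$, and (2) drops out.

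The delicate step is the $\mu_2$-bookkeeping in the middle identity: one must verify that the two $c_{\sqrt{\,}}$-corrections produced by the double product in $\widetilde{F^{\times 2}}$ assemble precisely into the formula for $c'''$ recorded in (\ref{c33}), and that the multiplication of $[a_g^2,\epsilon][a_{g'}^2,\epsilon']$ with $c''(\dot{t}_g,\dot{t}_{g'})$ is performed in the correct order so that the grouping $c_{\sqrt{\,}}(a_g^{2}a_{g'}^{2},c'(\dot t_g,\dot t_{g'}))\cdot c_{\sqrt{\,}}(a_g^{2},a_{g'}^{2})$ appears rather than its reverse; a sloppy square-root convention or a wrong ordering would create a spurious sign that breaks the identity.
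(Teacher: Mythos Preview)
Your proposal is correct and follows essentially the same route as the paper. Both arguments establish the key identity $[a_g^{2},\epsilon][a_{g'}^{2},\epsilon']\,c''(\dot t_g,\dot t_{g'})=[a_{gg'}^{2},\epsilon'']$ in $\widetilde{F^{\times 2}}$ by unwinding the cocycle $c'''$ via (\ref{c33}), then push forward along the homomorphism $\sqrt{\,}$ of Lemma \ref{ste}(2); the only cosmetic difference is that the paper reaches this identity by multiplying out $\widetilde{\lambda}_{\widetilde{g}}\widetilde{\lambda}_{\widetilde{g'}}$ inside $\widetilde{F^{\times}}$ and peeling off $[t_{gg'},1]$, whereas you first read off $a_{gg'}^{2}$ and $\epsilon_{gg'}$ from the product in $\widetilde{\GSp}(W)$ and then verify the identity directly in $\widetilde{F^{\times 2}}$.
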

\begin{proof}
1) Let us write
$$\widetilde{\lambda}_{\widetilde{g}}=[a_g^2,\epsilon][ t_g,1], \widetilde{\lambda}_{\widetilde{g'}}=[a_{g'}^2,\epsilon'][ t_{g'},1],\widetilde{\lambda}_{\widetilde{g}\widetilde{g'}}=[a_{gg'}^2,\epsilon''][ t_{gg'},1],$$
for $t_g=\kappa(\dot{\lambda}_g)$, $t_{g'}=\kappa(\dot{\lambda}_{g'})$, $t_{gg'}=\kappa(\dot{\lambda}_{gg'})$.
As $\widetilde{\lambda}_{\widetilde{g}}\widetilde{\lambda}_{\widetilde{g'}}=\widetilde{\lambda}_{\widetilde{g}\widetilde{g'}}$, by (\ref{c33}),
$$[a_g^2,\epsilon][ t_g,1][a_{g'}^2,\epsilon'][ t_{g'},1]=[a_g^2a_{g'}^2, c'''(a_g^2, a_{g'}^2)\epsilon\epsilon'] [t_gt_{g'},  c'''(t_g, t_{g'})]$$
$$=[a_g^2a_{g'}^2, c_{\sqrt{\,}}(a_g^2, a_{g'}^2)\epsilon\epsilon'] [t_gt_{g'},  1]=[a_g^2a_{g'}^2, c_{\sqrt{\,}}(a_g^2, a_{g'}^2)\epsilon\epsilon'] [c'(\lambda_g,\lambda_{g'})t_{gg'},  1]$$
$$=[a_g^2a_{g'}^2, c_{\sqrt{\,}}(a_g^2, a_{g'}^2)\epsilon\epsilon'][c'(\lambda_g,\lambda_{g'}),1] [t_{gg'},  1]=[a_g^2a_{g'}^2, c_{\sqrt{\,}}(a_g^2, a_{g'}^2)\epsilon\epsilon']c''(\lambda_g,\lambda_{g'}) [t_{gg'},  1].$$
So $$[a_g^2,\epsilon][a_{g'}^2,\epsilon']c''(\lambda_g,\lambda_{g'})=[a_g^2,\epsilon][a_{g'}^2,\epsilon']c''(\widetilde{\lambda}_{\widetilde{g}}, \widetilde{\lambda}_{\widetilde{g'}})=[a_{gg'}^2,\epsilon''].$$
Hence the result holds.\\
2) It is a consequence of (1).
\end{proof}
Let us extend the twisted  action $\alpha$ to $\wideparen{\widetilde{\GSp}}(W)$ as follows:
\begin{equation}\label{alphaacmod411111}
\alpha: \Ha(W) \times \wideparen{\widetilde{\GSp}}(W) \longrightarrow \Ha(W); ((v,t), (\widetilde{g},k)) \longmapsto (a_{\widetilde{g}}^{-1} v \widetilde{g}k, \lambda_{a_{\widetilde{g}}^{-1}\widetilde{g}} \lambda_{k}  t).
\end{equation}
By the above lemma, it is well-defined. In this action,  $\widetilde{F^{\times}}_+$  acts trivially on $\Ha(W)$,  so it indeed defines an action of $\wideparen{\PGSp^{\pm}}(W)$ on $\Ha(W)$.

\subsection{Extended to $\PGMp^{\pm}(W)$}
 Via the projections:  $\PGMp^{\pm}(W) \to \PGSp^{\pm}(W)$ and  $\overline{\PGSp^{\pm}}(W) \to \PGSp^{\pm}(W)$, we  view $\wideparen{c''(-,-)}$ as a cocycle on $\PGMp^{\pm}(W)$ and $\overline{\PGSp^{\pm}}(W)$.  Let   $\wideparen{\PGMp^{\pm}}(W)$,  $\wideparen{\overline{\PGSp^{\pm}}}(W)$, denote the corresponding covering groups associated to $\wideparen{c''(-,-)}$.  Then there exists the following commutative diagrams:
\begin{equation}\label{eq15}
\begin{CD}
1@>>>\overline{\Sp}(W) @>>>\wideparen{\overline{\PGSp^{\pm}}}(W)@> \wideparen{\dot{\lambda}}>> \wideparen{F^{\times}/F^{\times 2}} @>>> 1\\
@. @VVV @VV\wideparen{p}V @\vert @.\\
1 @>>> \Sp(W) @>>> \wideparen{\PGSp^{\pm}}(W)@>\wideparen{\dot{\lambda}}>> \wideparen{F^{\times}/F^{\times 2}} @>>> 1\\
\end{CD}
\end{equation}
\begin{equation}\label{eq16}
\begin{CD}
1@>>>\Mp(W) @>>>\wideparen{\PGMp^{\pm}}(W)@> \wideparen{\dot{\lambda}}>> \wideparen{F^{\times}/F^{\times 2}} @>>> 1\\
@. @VVV @VV\wideparen{p}V @\vert @.\\
1 @>>> \Sp(W) @>>> \wideparen{\PGSp^{\pm}}(W)@>\wideparen{\dot{\lambda}}>> \wideparen{F^{\times}/F^{\times 2}} @>>> 1\\
\end{CD}
\end{equation} 
\subsection{Extended  Weil representations}
Recall that $(\pi_{\psi}, V_{\psi})$ is a Weil representation of $\Mp(W)$ associated to $\psi$.  In this case,  we define the twisted induced Weil representation of $\PGMp^{\pm}(W)$ as follows:
$$\Pi_{\psi}=\cInd_{\Mp(W)}^{\PGMp^{\pm}(W)} \pi_{\psi},\quad \mathcal{V}_{\psi}=\cInd_{\Mp(W)}^{\PGMp^{\pm}(W)} V_{\psi}.$$
By considering the restriction of $\Pi_{\psi}$ on $\Mp(W)$, we can obtain four Weil representations, corresponding  to $\pi_{\psi^a}$, as $a=1, \zeta_1,  \varpiup,\zeta_1\varpiup$. So it is independent  of  the choice of $\psi$. By (\ref{zzcoc114122}), this representation can also be defined over $\overline{\PGSp^{\pm}}(W)$. By Clifford-Mackey theory, it is also compatible with the induced Weil representation directly from $\overline{\Sp}(W)$ to $\overline{\PGSp^{\pm}}(W)$.  The next purpose is to see whether such  representations  can arise    from the Heisenberg  representation of $\Ha(W)$.

Recall the definition of the $\alpha$ action  in (\ref{alphaac}).  Through the  projection $\wideparen{p}$, we  extend this  action to $\wideparen{\overline{\PGSp^{\pm}}}(W)$ and $\wideparen{\PGMp^{\pm}}(W)$. Then let us define a representation as follows:
$$\wideparen{\Pi}_{\psi}=\cInd_{\Mp(W)\ltimes \Ha(W)}^{\wideparen{\PGMp^{\pm}(W)} \ltimes_{\alpha} \Ha(W)}\pi_{\psi},\quad \wideparen{\mathcal{V}}_{\psi}=\cInd_{\Mp(W)\ltimes \Ha(W)}^{\wideparen{\PGMp^{\pm}(W)} \ltimes_{\alpha} \Ha(W)} V_{\psi}.$$
Notice that $\wideparen{\PGMp^{\pm}(W)}$ is a covering group over $\PGMp^{\pm}(W)$. So there also  exists a  question on how to reasonably eliminate the effects of the central group.  Similarly, we can define the corresponding representation of $\wideparen{\overline{\PGSp^{\pm}}}(W)\ltimes_{\alpha} \Ha(W)$.

\labelwidth=4em
\addtolength\leftskip{25pt}
\setlength\labelsep{0pt}
\addtolength\parskip{\smallskipamount}


\begin{thebibliography}{99}



\bibitem{Ba} L.Barthel,
 {\it Local Howe correspondence for groups of similitudes},
  J. Reine Angew. Math., 414 (1991), 207-220.

\bibitem{GaGrPr} W. T.Gan, B. H. Gross and D. Prasad,
 {\it Twisted GGP Problems and Conjectures}, Compositio Mathematica, Volume 159, Issue 9, September 2023, pp. 1916-1973.


\bibitem{GePi} S.Gelbart, I.Piatetski-Shapiro,
{\it  Some remarks on metaplectic cusp forms and the correspondences of Shimura and Waldspurger}, Israel J. Math. 44(1983), 97-126.

\bibitem{Ge} L.Gerstein,
{\it Basic Quadratic Forms}, Graduate Studies in Mathematics, Vol. 90, AMS, Providence, 2008.

\bibitem{Ku} S.S.Kudla,
 {\it  Notes on the local theta correspondence},
 preprint, available at http://www.math.utotonto.ca/~skudla/castle.pdf, 1996.

\bibitem{Ll} J. L. Lahorgue,
{\it Groups of order $8$ and $16$}, Master. France. 2018. cel-01841041.

\bibitem{MoViWa} C.M\oe glin,  M.-F.Vign\'eras,  J.-L.Waldspurger,
{\it Correspondances de Howe sur un corps $p$-adique},
Lect. Notes Math. 1291, Springer, 1987.

\bibitem{Pa} S.P.Patel,
{\it Branching laws on the metaplectic
cover of $\GL_2$}, thesis, 2014.


\bibitem{Pe} P.Perrin,
{\it Repr\'esentations de Schr\" odinger. Indice de Maslov et groupe metaplectique},
in ``Non commutative Harmonic Analysis and Lie Groups'', Lect. Notes Math. 880 (1981), 370-407.

\bibitem{Ra} R.R.Rao,
{\it On some explicit formulas in the theory of the Weil representation},
Pacific J. Math. 157 (1993), 335-371.

\bibitem{Wa1} J.-L. Waldspurger,
{\it Correspondance de Shimura},
J. Math. Pures Appl. (9) 59 (1980), no. 1, 1-132.

\bibitem{Wa2} J.-L. Waldspurger,
{\it Correspondances de Shimura et quaternions},
Forum Math. 3 (1991), no. 3, 219-307.


\bibitem{Wa3} C.-H. Wang,
{\it Extended Weil representations by some twisted actions: the finite field cases},
arXiv:2204.03987.


\bibitem{We} A.Weil,
{\it  Sur certains groupes d'op\'erateurs unitaires},
Acta Math. 111 (1964), 143-211.

\end{thebibliography}
\end{document}